\newcommand\eps{\varepsilon}
\newcommand\R{{\mathbf{R}}}
\newcommand\C{{\mathbf{C}}}
\newcommand\Z{{\mathbf{Z}}}
\newcommand\N{{\mathcal{N}}}
\renewcommand\S{{\mathcal{S}}}
\newcommand\supp{{\operatorname{Supp}}}
\newcommand\re{{\operatorname{Re}}}
\theoremstyle{plain}
  \newtheorem{theorem}[subsection]{Theorem}
  \newtheorem{lemma}[subsection]{Lemma}
  \newtheorem{corollary}[subsection]{Corollary}
\theoremstyle{remark}
  \newtheorem{remark}[subsection]{Remark}
\theoremstyle{definition}
  \newtheorem{definition}[subsection]{Definition}
\begin{document}

\title[]{The linear profile decomposition for the Airy equation and the existence of maximizers for the Airy Strichartz inequality}
\author{Shuanglin Shao}
\address{Department of Mathematics, UCLA, CA 90095}
\curraddr{School of Mathematics, Institute for Advanced Study, Princeton, NJ 08540}
\email{slshao@math.ias.edu}

\subjclass[2000]{35Q53}

\vspace{-0.1in}
\begin{abstract}
In this paper, we establish the linear profile decomposition for the Airy equation with complex or real initial data in $L^2$, respectively. As an application, we obtain a dichotomy result on the existence of maximizers for the symmetric Airy-Strichartz inequality.
\end{abstract}

\maketitle

\section{Introduction}
In this paper, we consider the problem of the linear profile decomposition for the Airy equation with the $L^2$ initial data
\begin{equation}\label{eq:airy}
\begin{cases}&\partial_t u+\partial_x^3 u=0, t\in \R,\, x\in \R,\\
&u(0,x)=u_0(x)\in L^2,
\end{cases}
\end{equation}
where $u: \R\times\R\to \R \text{ or }\C$. Roughly speaking, the profile decomposition is to investigate the general structure of  a sequence of solutions to the Airy equation with bounded initial data in $L^2$. We expect that it can be expressed, up to a subsequence, as a sum of a superposition of concentrating waves-- profiles--and a reminder term. The profiles are ``almost orthogonal" in the Strichartz space and in $L^2$ while the remainder term is small in the same Strichartz norm and can be negligible in practice. The profile decomposition is also referred to as the ``bubble decomposition" in the literature, see \cite[p.35]{Killip-Visan:2008:clay-lecture-notes} for an interesting historical discussion.

The same problem in the context of the wave or Schr\"odinger equations has been intensively studied recently. For the wave equations, Bahouri-G\'erard \cite{Bahouri-Gerard:1999:profile-wave} established a linear profile decomposition for the energy critical wave equation in $\R^3$ (their argument can be generalized to higher dimensions). Following \cite{Bahouri-Gerard:1999:profile-wave}, Keraani \cite{Keraani:2001:profile-schrod-H^1} obtained a linear profile decomposition for energy critical Schr\"odinger equations, also see \cite{Shao:2008:maximizers-Strichartz-Sobolev-Strichartz}. For the mass critical Schr\"odinger equations, when $d=2$, Merle-Vega \cite{Merle-Vega:1998:profile-schrod} established a linear profile decomposition, similar in spirit to that in \cite{Bourgain:1998:refined-Strichartz-NLS}; Carles-Keraani \cite{Carles-Keraani:2007:profile-schrod-1d} treated the $d=1$ case, while the higher dimensional analogue was obtained by B\'egout-Vargas \cite{Begout-Vargas:2007:profile-schrod-higher-d}. In general, a nonlinear profile decomposition can be achieved from the linear case via a perturbation argument. The first ingredient of the proof  of linear profile decompositions is to start with some refined inequality: the refined Sobolev embedding or the refined Strichartz inequality. Usually establishing such refinements needs some nontrivial work. For instance, in the  Schr\"odinger case, the two dimensional improvement is due to Moyua-Vargas-Vega \cite{Moyua-Vargas-Vega:1999} involving the $X_p^q$ spaces; the one dimensional improvement due to Carles-Keraani \cite{Carles-Keraani:2007:profile-schrod-1d} using the Hausdorff-Young inequality and the weighted Fefferman-Phong inequality \cite{Fefferman:1983:uncertainty-principle}, which Kenig-Ponce-Vega \cite{Kenig-Ponce-Vega:2000:KdV} first introduced to prove their refined Strichartz inequality \eqref{eq:KPV-Strichartz} for the Airy equation; the higher dimensional refinement due to B\'egout-Vargas \cite{Begout-Vargas:2007:profile-schrod-higher-d} based on a new bilinear restriction estimate for paraboloids by Tao \cite{Tao:2003:paraboloid-restri}. Another important ingredient of the arguments is the idea of the concentration-compactness principle which aims to compensate for the defect of compactness of the Strichartz inequality, which was exploited in \cite{Bahouri-Gerard:1999:profile-wave},  \cite{Merle-Vega:1998:profile-schrod}, \cite{Carles-Keraani:2007:profile-schrod-1d} and \cite{Begout-Vargas:2007:profile-schrod-higher-d}; also see \cite{Schindler-Tintalev:2002:abstract-concentration-compactness} for an abstract version of this principle in the Hilbert space. The profile decompositions turn to be quite useful in nonlinear dispersive equations. For instance, they can be used to analyze the mass concentration phenomena near the blow up time for the mass critical Schr\"odinger equation, see \cite{Merle-Vega:1998:profile-schrod}, \cite{Carles-Keraani:2007:profile-schrod-1d}, \cite{Begout-Vargas:2007:profile-schrod-higher-d}. It was also used to show the existence of minimal mass or energy blow-up solutions for the Schr\"odinger or wave equations at critical regularity, which is an important step in establishing the global well-posedness and scattering results for such equations, see \cite{Kenig-Merle:2006:focusing-energy-NLS-radial}, \cite{Kenig-Merle:2007:focusing-energy-nonlinear-Wave}, \cite{Killip-Tao-Visan:2008:cubic-NLS-radial}, \cite{Tao-Visan-Zhang:2007:radial-NLS-higher}, \cite{Killip-Visan:2008:focusing-energy-critical-NLS-higher-d}. In \cite{Shao:2008:maximizers-Strichartz-Sobolev-Strichartz}, the author used it to establish the existence of maximizers for the non-endpoint Strichartz and Sobolev-Strichartz inequalities for the Schr\"odinger equation.

The discussion above motivates the question of profile decompositions for the Airy equation, which is the free form of the mass critical generalized Korteweg-de Vries (gKdV) equation,
 \begin{equation}\label{eq:gKdV}
\begin{cases}&\partial_t u+\partial_x^3 u\pm u^4\partial_x u=0, t\in \R,\, x\in \R,\\
&u(0,x)=u_0(x).
\end{cases}
\end{equation}
This is one of the (generalized) KdV equations (\cite{Tao:2006-CBMS-book}) and is the natural analogy to the mass critical nonlinear Schr\"odinger equation in one spatial dimension. The KdV equations arise from describing the waves on the shallow water surfaces, and turn out to have connections to many other physical problems.   As is well known, the class of solutions to \eqref{eq:airy} enjoys a number of symmetries which preserve the mass $\int |u|^2 dx$. We will employ the notations from \cite{Killip-Tao-Visan:2008:cubic-NLS-radial} and first discuss the symmetries at the initial time $t=0$.
\begin{definition}[Mass-preserving symmetry group]For any phase $\theta\in \R/2\pi\Z$, position $x_0\in \R$ and scaling parameter $h_0>0$, we define the unitary transform $g_{\theta,x_0,h_0}:L^2\to L^2$ by the formula
$$[g_{\theta,x_0,h_0}f](x):=\frac {1}{h_0^{1/2}}e^{i\theta}f(\frac {x-x_0}{h_0}).$$
We let $G$ be the collection of such transformations. It is easy to see that $G$ is a group.
\end{definition}
Unlike the free Schr\"odinger equation
\begin{equation}\label{eq:Schrodinger}
\begin{cases}&i\partial_t u-\triangle u=0, t\in \R,\, x\in \R^d,\\
&u(0,x)=u_0(x),
\end{cases}
\end{equation}
two important symmetries are missing for $\eqref{eq:airy}$, namely, the Galilean symmetry
$$u(t,x)\mapsto e^{ix\xi_0+it|\xi_0|^2}u(t,x+2t\xi_0),$$ and the pseudo-conformal symmetry
$$u(t,x)\mapsto |t|^{-d/2}e^{-i|x|^2/(4t)}u(-1/t,x/t).$$
This lack of symmetries causes difficulties if we try to mimic the existing argument of profile decompositions for the Schr\"odinger equations. In this paper, we will show how to compensate for the lack of the Galilean symmetry when developing the analogous version of linear profile decompositions for the Airy equation \eqref{eq:airy}.

Like Schr\"odinger equations, an important family of inequalities, the Airy Strichartz inequality \cite[Theorem 2.1]{Kenig-Ponce-Vega:1991:dispersive-estimates}, is associated with the Airy equation \eqref{eq:airy}. It is invariant under the symmetry group and asserts that:
\begin{equation}\label{eq:airy-strichartz}
 \|D^{\alpha} e^{-t\partial_x^3}u_0\|_{L^q_tL^r_x} \lesssim
 \|u_0\|_{L^2},
\end{equation}
if and only if $-\alpha+\frac 3q+\frac 1r=\frac 12$ and $-1/2\le \alpha \le  1/q$, where $e^{-t\partial_x^3}u_0$ and $D^\alpha$ are defined in the ``Notation" section.
When $q=r=6$ and $\alpha=1/6$, we also have the following refined Strichartz estimate due to Kenig-Ponce-Vega, which is the key to establishing the profile decomposition results for the Airy equation in this paper.
\begin{lemma}[KPV's refined Strichartz \cite{Kenig-Ponce-Vega:2000:KdV}]\label{le:KPV-Strichartz}Let $p>1$. Then
\begin{equation}\label{eq:KPV-Strichartz}
\|D^{1/6} e^{-t\partial_x^3}u_0\|_{L^6_{t,x}}\le C\left(\sup_{\tau}|\tau|^{\frac 12-\frac 1p}\| \widehat{u_0}\|_{L^{p}(\tau)}\right)^{\frac 13}\|u_0\|^{\frac 23}_{L^2},
\end{equation} where $\tau$ denotes an interval of the real line with length $|\tau|$.
\end{lemma}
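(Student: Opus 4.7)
The plan is to combine a Whitney decomposition of frequency space with a bilinear Airy estimate and the one-dimensional weighted Fefferman-Phong inequality, following the approach initiated by Kenig, Ponce and Vega. Write $u(t,x)=e^{-t\partial_x^3}u_0$ and start from the identity
\[
\|D^{1/6}u\|_{L^6_{t,x}}^2=\bigl\||D^{1/6}u|^2\bigr\|_{L^3_{t,x}}.
\]
Decompose $u_0$ into Littlewood-Paley pieces $u_{0,I}$ supported on dyadic intervals $I$ and expand $|D^{1/6}u|^2=\sum_{I,J}D^{1/6}u_I\cdot\overline{D^{1/6}u_J}$ with $u_I=e^{-t\partial_x^3}u_{0,I}$. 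A Whitney decomposition of $\R\times\R\setminus\{\xi_1=\xi_2\}$ restricts the sum to pairs $(I,J)$ of comparable size $\ell$ and separation $\operatorname{dist}(I,J)\sim\ell$, where the transverse Jacobian of the Airy phase is nondegenerate.

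The analytic core is the bilinear $L^2_{t,x}$ estimate: since $(\xi_1,\xi_2)\mapsto(\xi_1-\xi_2,\xi_1^3-\xi_2^3)$ has Jacobian $3|\xi_1-\xi_2|\,|\xi_1+\xi_2|$, Plancherel in space-time yields
\[
\|u_I\overline{u_J}\|_{L^2_{t,x}}^2\lesssim\iint_{I\times J}\frac{|\widehat{u_0}(\xi_1)|^2|\widehat{u_0}(\xi_2)|^2}{|\xi_1-\xi_2|\,|\xi_1+\xi_2|}\,d\xi_1\,d\xi_2.
\]
On a Whitney pair, $|\xi_1-\xi_2|\sim\ell$ while $|\xi_1+\xi_2|$ is comparable to the distance $\rho$ of $I\cup J$ from the origin, and the $D^{1/6}$ factors on the left pair correctly with this weight to reproduce the $L^6$ scaling. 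Interpolating the $L^2$ bilinear estimate against the trivial Bernstein-type bound $\|u_I\overline{u_J}\|_{L^\infty_{t,x}}\lesssim\|\widehat{u_0}\|_{L^1(I)}\|\widehat{u_0}\|_{L^1(J)}$ allows the $L^2$ Fourier norms on the right to be traded for $L^p$ Fourier norms, which is exactly the move that introduces the exponent $1/2-1/p$.

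To sum over Whitney pairs, observe that the space-time Fourier supports of $D^{1/6}u_I\cdot\overline{D^{1/6}u_J}$ are essentially disjoint across distinct Whitney cubes, via the change of variables $(\xi_1,\xi_2)\mapsto(\xi_1-\xi_2,\xi_1^3-\xi_2^3)$, so Hausdorff-Young organizes the $L^3_{t,x}$ norm of the total sum as an $\ell^{3/2}$ summation of the individual pieces. Extracting the factor $N:=\sup_\tau|\tau|^{1/2-1/p}\|\widehat{u_0}\|_{L^p(\tau)}$ from each pair and bounding what remains by $\|u_0\|_{L^2}$ via the one-dimensional weighted Fefferman-Phong inequality yields $\||D^{1/6}u|^2\|_{L^3_{t,x}}\lesssim N^{2/3}\|u_0\|_{L^2}^{4/3}$, which is the claim after taking a square root. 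The main obstacle is the bookkeeping in this last step: the geometric decay coming from the bilinear estimate has to balance precisely the Fourier-side loss from Hausdorff-Young, and the Fefferman-Phong sum must converge with exactly the homogeneities needed to produce the powers $1/3$ of $N$ and $2/3$ of $\|u_0\|_{L^2}$ prescribed by the scaling.
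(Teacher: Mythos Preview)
Your outline is closer to the original Kenig--Ponce--Vega argument than to the paper's proof, which is explicitly presented as a \emph{new} proof (attributed to Tao) designed to avoid the Fefferman--Phong inequality. Both arguments share the opening moves: square to $L^3_{t,x}$, pass through the change of variables $(\xi_1,\xi_2)\mapsto(\xi_1-\xi_2,\xi_1^3-\xi_2^3)$, apply Hausdorff--Young, and use a Whitney decomposition. After that they diverge. The paper applies Hausdorff--Young \emph{first} and globally, reducing everything to the purely Fourier-side inequality
\[
\iint\frac{|\widehat f(\xi)\widehat f(\eta)|^{3/2}}{|\xi-\eta|^{1/2}}\,d\xi\,d\eta\lesssim\|\widehat f\|_{L^2}^2
\]
under the normalization $\sup_\tau|\tau|^{1/2-1/p}\|\widehat f\|_{L^p(\tau)}=1$; Whitney then reduces this to $\sum_I|I|^{-1/2}\bigl(\int_I|\widehat f|^{3/2}\bigr)^2\lesssim\|\widehat f\|_{L^2}^2$. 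The new ingredient is an elementary level-set decomposition $\widehat{f_I}=\sum_n\widehat{f_{n,I}}$ with $\widehat{f_{n,I}}=\widehat f\cdot 1_{\{|\widehat f|\sim 2^n|I|^{-1/2}\}}$: Cauchy--Schwarz in $n$, then Chebyshev against the hypothesis $\int_I|\widehat f|^p\le|I|^{1-p/2}$, gives geometric decay in $|n|$, and the remaining sum over dyadic $I$ collapses because for fixed $\xi$ only $O(1)$ scales satisfy $|\widehat f(\xi)|\sim 2^{n-j/2}$. No bilinear $L^2$ estimate, no interpolation, and no Fefferman--Phong appear.

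One step in your sketch does not connect as written. Interpolating the bilinear $L^2_{t,x}$ bound against the $L^\infty_{t,x}$ Bernstein bound yields a \emph{physical-side} $L^3_{t,x}$ estimate on each Whitney block $F_k=D^{1/6}u_I\cdot\overline{D^{1/6}u_J}$, but the Hausdorff--Young step you then invoke to sum gives only $\|\sum_k F_k\|_{L^3}\le\bigl(\sum_k\|\widehat{F_k}\|_{L^{3/2}}^{3/2}\bigr)^{2/3}$, which requires \emph{Fourier-side} $L^{3/2}$ control of each piece, not physical-side $L^3$ control; there is no way to pass from the latter back to the former. The interpolation is therefore a detour: once you commit to Hausdorff--Young for the summation, the honest move is to compute $\|\widehat{F_k}\|_{L^{3/2}}$ directly via the Jacobian, which is exactly the paper's reduction. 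Your Fefferman--Phong endgame is a legitimate alternative to the level-set argument for handling the resulting dyadic sum---it is what KPV originally did---but the paper's route is more elementary and self-contained.
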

In Section \ref{sec:decomp-complx}, we will present a new proof suggested by Terence Tao by using the Whitney decomposition.

As in the Schr\"odinger case, the Airy Strichartz inequality \eqref{eq:airy-strichartz} cannot guarantee the solution map from the $L^2$ space to the Strichartz space to be compact, namely, every $L^2$-bounded sequence will produce a convergent subsequence of solutions in the Strichartz space. The particular Strichartz space we are interested  in is equipped with  the norm $\|D^{1/6}u\|_{L^6_{t,x}}$. The failure of compactness can be seen explicitly from creating counterexamples by considering the symmetries in $L^2$ such as the space and time translations, or scaling symmetry or frequency modulation. Indeed, given $x_0\in \R$, $t_0\in \R$ and $h_0\in (0,\infty)$, we denote by $\tau_{x_0}$, $S_{h_0}$ and $R_{t_0}$ the operators defined by
$$\tau_{x_0} \phi(x):=\phi(x-x_0), S_{h_0}\phi(x):=\frac {1}{h_0^{1/2}}\phi(\frac {x}{h_0}),
R_{t_0}\phi(x):=e^{-t_0\partial_x^3}\phi(x).$$
Let $(x_n)_{n\ge 1}$, $(t_n)_{n\ge 1}$ be sequences both going to infinity, and $(h_n)_{n\ge 1}$ be a sequence going to zero as $n$ goes to infinity. Then for any nontrivial $\phi\in \S$, $(\tau_{x_n}\phi)_{n\ge 1}$, $(S_{h_n}\phi)_{n\ge 1}$ and $(R_{t_n}\phi)_{n\ge 1}$ weakly converge to zero in $L^2$. However, their Strichartz norms are all equal to $\|D^{1/6}e^{-t\partial_x^3}\phi\|_{L^6_{t,x}}$, which is nonzero. Hence these sequences are not relatively compact in the Strichartz spaces. Moreover, the frequency modulation also exhibits the defect of compactness: for $\xi_0\in \R$, we define $M_{\xi_0}$ via
$$M_{\xi_0}\phi(x):=e^{ix\xi_0}\phi(x).$$
Choosing $(\xi_n)_{n\ge 1}$ to be a sequence going to infinity as $n$ goes to infinity, we see that $(M_{\xi_n}\phi)_{n\ge 1}$ converges weakly to zero. However, from Remark \ref{re:airy-schr},  $\|D^{1/6}e^{-t\partial_x^3}(e^{i(\cdot)\xi_n}\phi)\|_{L^6_{t,x}}$ converges to $3^{-1/6}\|e^{-it\partial_x^2}\phi\|_{L^6_{t,x}}$, which is not zero. This shows that the modulation operator $M_{\xi_0}$ is not compact either.

It will be clear from the statements of Theorem \ref{thm:Airy-prof} and Theorem \ref{thm:Airy-prof-real} that these four symmetries in $L^2$ above are the only obstructions to the compactness of the solution map. Hence the parameter $(h_0,\xi_0,x_0,t_0)$ plays a special role in characterizing this defect of compactness; moreover, a sequence of such parameters needs to satisfy some ``orthogonality" constraint (the terminology ``orthogonality" is in the sense of Lemma \ref{le:weak-converg}.)
\begin{definition}[Orthogonality]\label{def-ortho}For $j\neq k$, two sequences $\Gamma_n^j:=(h_n^j,\xi_n^j, x_n^j,t_n^j)_{n\ge 1}$ and $\Gamma_n^k:=(h_n^k,\xi_n^k, x_n^k,t_n^k)_{n\ge 1}$ in $(0,\infty)\times \R^3$ are orthogonal if one of the following holds,
\begin{itemize}
\item $\lim_{n\to \infty}\left(\dfrac {h_n^j}{h_n^k}+\dfrac {h_n^k}{h_n^j}+h_n^j|\xi_n^j-\xi_n^k|\right)=\infty$,
\item $(h_n^j,\xi_n^j)=(h_n^k, \xi_n^k)$ and
$$\lim_{n\to \infty}\left(\dfrac{|t_n^k-t_n^j|}{(h_n^j)^3}+\dfrac {3|(t_n^k-
t_n^j)\xi_n^j|}{(h_n^j)^2}+\dfrac{|x_n^j-x_n^k+3(t_n^j-t_n^k)
(\xi_n^j)^2|}{h_n^j}\right)=\infty.$$
\end{itemize}
\end{definition}
\begin{remark} For any $\Gamma_n^j=(h_n^j,\xi_n^j, x_n^j,t_n^j)_{n\ge 1}$, it is clear that, up to a subsequence, $\lim_{n\to\infty}|h_n^j\xi_n^j|$ is either finite or infinite. For the former, we can reduce to $\xi_n^j\equiv 0$ for all n by changing profiles, see Remark \ref{re:reduction}; for the latter, the corresponding profiles exhibit Schr\"odinger behavior in some sense, see Remark \ref{re:airy-schr}. In view of this, we will group the decompositions accordingly in the statements of our main theorems below.
\end{remark}
Now we are able to state the main theorems. When the initial data to the equation \eqref{eq:airy} is complex, the following theorem on the linear Airy profile decomposition is proven in Section \ref{sec:proof-airy-prof}.
\begin{theorem}[Complex Version]\label{thm:Airy-prof}
Let $(u_n)_{n\ge 1}$ be a sequence of complex-valued functions satisfying $\|u_n\|_{L^2}\le 1$.  Then up to a subsequence, there exists a sequence of $L^2$
functions $(\phi^j)_{j\ge 1}: \R\to \C$ and a family of pairwise orthogonal sequences
$\Gamma_n^j=(h_n^j,\xi_n^j,x_n^j,t_n^j)\in (0,\infty)\times \R^3$ such that, for any $l\ge 1$,
there exists an $L^2$ function $w_n^l: \R\to \C$ satisfying
\begin{equation}\label{eq:prof}
u_n=\sum_{1\le j\le l, \xi_n^j\equiv 0 \atop \text{ or } |h_n^j\xi_n^j|\to \infty} e^{t_n^j\partial_x^3}g_n^j[e^{i(\cdot)h_n^j\xi_n^j}\phi^{j}]+w_n^l,
\end{equation}
where $g_n^j:=g_{0,x_n^j,h_n^j}\in G$ and \begin{equation}\label{eq:err}
 \lim_{l\to \infty}\lim_{n\to \infty} \|D^{1/6}e^{-t\partial_x^3}w_n^l\|_{L^6_{t,x}}=0.
\end{equation}
Moreover, for every $l\ge 1$, \begin{equation}\label{eq:almost-ortho}
 \lim_{n\to \infty} \left(\|u_n\|^2_{L^2}-\left(\sum_{j=1}^l\|\phi^j\|^2_{L^2}
 +\|w_n^l\|^2_{L^2}\right)\right)=0.
\end{equation}
\end{theorem}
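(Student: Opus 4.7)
The plan is to follow the iterative extraction scheme of Bahouri-G\'erard and Carles-Keraani, driven by the refined Strichartz inequality (Lemma \ref{le:KPV-Strichartz}) in place of its Schr\"odinger analogue, and to compensate for the absence of a Galilean symmetry by explicitly carrying the frequency parameter $\xi_n^j$ through the decomposition. Set $A_0 := \limsup_{n\to\infty}\|D^{1/6}e^{-t\partial_x^3}u_n\|_{L^6_{t,x}}$. If $A_0=0$, the conclusion holds trivially with $l=0$ and $w_n^0=u_n$. Otherwise, Lemma \ref{le:KPV-Strichartz} applied with some fixed $p>1$, together with passage to a subsequence, produces intervals $\tau_n\subset\R$ with $|\tau_n|^{1/2-1/p}\|\widehat{u_n}\|_{L^p(\tau_n)}\gtrsim A_0^3$.

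For the first profile, set $h_n^1 := |\tau_n|^{-1}$, let $\xi_n^1$ denote the center of $\tau_n$, and consider the normalized sequence $\tilde u_n(x) := (h_n^1)^{1/2}e^{-ih_n^1\xi_n^1 x}u_n(h_n^1 x)$. A direct change of variables gives $\widehat{\tilde u_n}(\eta) = (h_n^1)^{-1/2}\widehat{u_n}(\xi_n^1 + \eta/h_n^1)$, so that $\tilde u_n$ is uniformly bounded in $L^2$ while $\|\widehat{\tilde u_n}\|_{L^p([-1/2,1/2])}\gtrsim A_0^3$. Passing to a further subsequence we extract parameters $(x_n^1,t_n^1)$ and a weak $L^2$-limit $\phi^1$ of the shifted, Airy-propagated sequence, with $\|\phi^1\|_{L^2}\gtrsim A_0^{\beta}$ for some fixed exponent $\beta>0$; the quantitative lower bound uses the uniform Fourier bound on a compact frequency set together with Plancherel. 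Setting $w_n^1 := u_n - e^{t_n^1\partial_x^3}g_n^1[e^{i(\cdot)h_n^1\xi_n^1}\phi^1]$, weak convergence delivers the Pythagorean identity $\|u_n\|_{L^2}^2 = \|\phi^1\|_{L^2}^2 + \|w_n^1\|_{L^2}^2 + o(1)$. Iterating on $w_n^l$ produces profiles $\phi^j$ and remainders $w_n^j$ with $\|\phi^j\|_{L^2}\gtrsim A_{j-1}^{\beta}$, where $A_j := \limsup_n\|D^{1/6}e^{-t\partial_x^3}w_n^j\|_{L^6_{t,x}}$; since $\sum_j\|\phi^j\|_{L^2}^2 \le 1$, this forces $A_l\to 0$ as $l\to\infty$, yielding \eqref{eq:err}, while telescoping the Pythagorean identity yields \eqref{eq:almost-ortho}. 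Pairwise orthogonality of $\{\Gamma_n^j\}$ is enforced by contradiction: if some pair $j\neq k$ failed both bullets of Definition \ref{def-ortho}, then the $k$-th profile could be re-extracted from the $j$-th step, contradicting the maximality that underpins the weak-limit construction.

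The main obstacle is the absence of a Galilean symmetry. After a subsequence we may assume $h_n^j\xi_n^j\to\ell\in[-\infty,\infty]$. When $\ell\in\R$ is finite, the modulation $e^{ixh_n^j\xi_n^j}$ converges pointwise to $e^{ix\ell}$ and can be absorbed into a redefined profile $\phi^j$, reducing to the case $\xi_n^j\equiv 0$ displayed in \eqref{eq:prof}; the remaining parameters $(h_n^j,x_n^j,t_n^j)$ are then handled exactly as in the Carles-Keraani argument for the 1D Schr\"odinger equation. When $|h_n^j\xi_n^j|\to\infty$, a Taylor expansion of the Airy phase $e^{it\eta^3}$ around $\eta=\xi_n^j$ yields a dominant quadratic term after rescaling by $h_n^j$, so that the rescaled Airy evolution of the profile is asymptotically (up to the prefactor $3^{-1/6}$) a Schr\"odinger evolution, which is the content of Remark \ref{re:airy-schr}. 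The three quantities appearing in the second bullet of Definition \ref{def-ortho} arise exactly by substituting the $k$-th symmetry into this Schr\"odinger rescaling, and the condition records precisely when the pulled-back symmetries fail to converge in $L^2$. Making this Schr\"odinger asymptotics rigorous and verifying that Lemma \ref{le:weak-converg} captures orthogonality uniformly in both regimes is the technical heart of the proof.
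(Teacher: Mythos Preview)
Your outline attempts a single-step iteration, extracting all four parameters $(h_n^j,\xi_n^j,x_n^j,t_n^j)$ at once from the remainder $w_n^{j-1}$. The paper instead uses a genuinely two-stage scheme: Lemma~\ref{le:scale-core-complex} first extracts \emph{all} scale/frequency pairs $(\rho_n^j,\xi_n^j)_{1\le j\le N(\delta)}$ in one pass (with a Chebyshev truncation so that each resulting piece $f_n^j$ has $\widehat{f_n^j}$ uniformly in $L^\infty$ on a compact set after rescaling), and only afterwards does Lemma~\ref{le:spa-time} run concentration-compactness on each $f_n^j$ to produce the space-time parameters $(y_n^{j,\alpha},s_n^{j,\alpha})$. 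The assembly in Section~\ref{sec:proof-airy-prof} then needs the Strichartz almost-orthogonality Lemma~\ref{le:weak-converg-2} to show the combined remainder is small. Your treatment of the $|h_n^j\xi_n^j|$ dichotomy and of Definition~\ref{def-ortho} is correct in spirit.

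The real gap is your extraction of $(x_n^1,t_n^1)$ with the quantitative bound $\|\phi^1\|_{L^2}\gtrsim A_0^\beta$. The refined Strichartz inequality gives only $\|\widehat{\tilde u_n}\|_{L^p([-1/2,1/2])}\gtrsim A_0^3$, which is purely Fourier-side information and provides no mechanism to locate a space-time point where the Airy evolution is large; your justification ``uniform Fourier bound on a compact frequency set together with Plancherel'' does not do this (consider $\widehat{\tilde u_n}=e^{i\theta_n(\eta)}1_{[-1,1]}$ with arbitrary phases $\theta_n$). What is actually needed is an inverse Strichartz step: in the paper this is the contrapositive of the upgrading argument in Lemma~\ref{le:spa-time}, which uses the $L^\infty$ Fourier bound from the Chebyshev truncation (which you omit) together with the localized restriction estimates of Lemma~\ref{le:restriction} to interpolate $L^6_{t,x}$ between $L^q_{t,x}$ (bounded uniformly in $n$) and $L^\infty_{t,x}$ (controlled by $\mu(P)$), splitting into the cases $|a_n|\to\infty$ and $|a_n|$ bounded because the $D^{1/6}$ weight behaves differently in each. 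Without this ingredient the lower bound on $\|\phi^j\|_{L^2}$ is unjustified, so $\sum_j\|\phi^j\|_{L^2}^2\le 1$ no longer forces $A_l\to 0$, and \eqref{eq:err} is unproven.
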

When the initial sequence is of real-value, we analogously obtain the following real-version profile decomposition. Note that we can restrict the frequency parameter $\xi_n^j$ to be nonnegative.
\begin{theorem}[Real Version]\label{thm:Airy-prof-real}
Let $(u_n)_{n\ge 1}$ be a sequence of real-valued functions satisfying $\|u_n\|_{L^2}\le 1$.
Then up to a subsequence there exists
a sequence of $L^2$ functions, $(\phi^j)_{j\ge 1}$: $\R\to \C$, and a family of orthogonal sequences $\Gamma_n^j=(h_n^j,\xi_n^j,x_n^j,t_n^j)\in (0,\infty)\times [0,\infty)\times \times\R^2$ such that, for any $l\ge 1$, there exists an $L^2$ function $w_n^l$: $\R\to \R$ satisfying
\begin{equation}\label{eq:prof-real}
u_n=\sum_{1\le j\le l, \xi_n^j\equiv 0\atop \text{ or }|h_n^j\xi_n^j|\to \infty} e^{t_n^j\partial_x^3}g_n^j[\re(e^{i(\cdot)h_n^j\xi_n^j}\phi^{j})]+w_n^l,
\end{equation}
where $g_n^j:=g_{0,x_n^j,h_n^j}\in G$ and
 \begin{equation}\label{eq:err-real}
 \lim_{l\to \infty}\lim_{n\to \infty} \|D^{1/6}e^{-t\partial_x^3}w_n^l(x)\|_{L^6_{t,x}}=0.
\end{equation}
Moreover for every $l\ge 1$, \begin{equation}\label{eq:almost-ortho-real}
 \lim_{n\to \infty} \left(\|u_n\|^2_{L^2}-\left(\sum_{1\le j\le l, \xi_n^j\equiv0 \atop \text{ or } |h_n^j\xi_n^j|\to \infty}\|\re(e^{i(\cdot)h_n^j\xi_n^j}\phi^j)\|^2_{L^2}+\|w_n^l\|^2_{L^2}\right)\right)=0.
\end{equation}
\end{theorem}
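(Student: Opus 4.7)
My strategy is to deduce the real version from the complex version (Theorem \ref{thm:Airy-prof}) by exploiting the conjugation symmetry that the reality $u_n=\overline{u_n}$ forces on the profile decomposition. I first apply Theorem \ref{thm:Airy-prof} to $(u_n)$ regarded as complex-valued, obtaining complex profiles $\phi^j$, orthogonal parameter sequences $\Gamma_n^j=(h_n^j,\xi_n^j,x_n^j,t_n^j)$, and a complex remainder $\widetilde{w}_n^l$ satisfying \eqref{eq:err} and \eqref{eq:almost-ortho}. Since each $g_n^j=g_{0,x_n^j,h_n^j}$ carries zero phase and the Airy propagator $e^{t\partial_x^3}$ is a real operator, complex conjugation of \eqref{eq:prof} produces a second valid decomposition of the same $u_n$ with profiles $\overline{\phi^j}$ at parameters $(h_n^j,-\xi_n^j,x_n^j,t_n^j)$ and remainder $\overline{\widetilde{w}_n^l}$.

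The key step is to pair conjugate profiles: after passing to a subsequence, for every index $j$ with $|h_n^j\xi_n^j|\to\infty$ I claim there exists a paired index $j_-$ with $\Gamma_n^{j_-}=(h_n^j,-\xi_n^j,x_n^j,t_n^j)$ and $\phi^{j_-}=\overline{\phi^j}$. This follows by applying the extraction functional at the conjugate parameter set to both sides: by reality this functional recovers $\overline{\phi^j}$, and by the weak-limit orthogonality of Lemma \ref{le:weak-converg} together with Definition \ref{def-ortho}, the only term in the complex decomposition that can contribute a nonzero weak limit is one whose parameters match the conjugate set. I retain one representative per pair (the one with $\xi_n^j>0$), define $\phi^j_{\mathrm{real}}:=2\phi^j$, and collapse each pair via
\begin{equation*}
e^{ixh_n^j\xi_n^j}\phi^{j}+e^{-ixh_n^j\xi_n^j}\overline{\phi^j}=\re\bigl(e^{ixh_n^j\xi_n^j}\phi^j_{\mathrm{real}}\bigr).
\end{equation*}
Profiles with $\xi_n^j\equiv 0$ are kept as is; the same weak-limit argument, applied to the real sequence $u_n$, shows that $\phi^j$ can be taken real in that case. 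I then set $w_n^l:=\re(\widetilde{w}_n^l)$, which is real and inherits the Strichartz smallness \eqref{eq:err-real} because $D^{1/6}e^{-t\partial_x^3}$ commutes with $\re$ and $|\re z|\le|z|$ pointwise.

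The mass identity \eqref{eq:almost-ortho-real} is the main analytical obstacle. For each retained paired profile, a Riemann--Lebesgue computation yields
\begin{equation*}
\|\re(e^{ixh_n^j\xi_n^j}\phi^j_{\mathrm{real}})\|_{L^2}^2=4\|\re(e^{ixh_n^j\xi_n^j}\phi^j)\|_{L^2}^2\longrightarrow 2\|\phi^j\|_{L^2}^2=\|\phi^j\|_{L^2}^2+\|\phi^{j_-}\|_{L^2}^2,
\end{equation*}
so the retained profile absorbs the combined $L^2$-mass of both conjugate partners; for $\xi_n^j\equiv 0$ profiles with real $\phi^j$, the mass is unchanged. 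Because the pairing forces the truncated complex sum to be asymptotically real, $\|w_n^l\|_{L^2}^2-\|\widetilde{w}_n^l\|_{L^2}^2\to 0$, and summing reproduces \eqref{eq:almost-ortho}. The hardest step is the rigorous pairing argument: one must deploy Lemma \ref{le:weak-converg} carefully to show that unpaired positive-frequency complex profiles are incompatible with reality of $u_n$, and to track the interaction between finitely many pairs so that conjugation collapses cleanly onto the orthogonality structure dictated by Definition \ref{def-ortho}.
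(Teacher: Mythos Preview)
Your route is genuinely different from the paper's. The paper does not deduce the real version from Theorem~\ref{thm:Airy-prof} by a posteriori conjugation pairing; instead it reruns the two-stage construction with real-adapted preliminary lemmas. At the first stage (Lemma~\ref{le:scales-cores-real}, resting on Lemma~\ref{le:KPV-lemma}) the symmetry $\overline{\widehat{u_n}}(\xi)=\widehat{u_n}(-\xi)$ is used on the Fourier side to produce pieces $f_n^j$ supported in symmetric sets $\tau_j\cup(-\tau_j)$, each written as $f_n^j=2\re(f_n^{j,+})$ with $\widehat{f_n^{j,+}}$ supported in $\tau_j\subset(0,\infty)$. The complex space--time extraction of Lemma~\ref{le:spa-time} is then applied to the one-sided piece $f_n^{j,+}$, real parts are taken afterward (Lemma~\ref{le:spa-time-real}), and Section~\ref{sec:proof-airy-prof} assembles the result exactly as in the complex case. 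So the ``pairing'' you want is hard-wired before any profile is extracted, and $\xi_n^j\ge 0$ holds by construction.

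Your pairing argument, as written, has a gap. You invoke Lemma~\ref{le:weak-converg} to say that extraction at the conjugate parameters $(h_n^j,-\xi_n^j,x_n^j,t_n^j)$ isolates a single profile; but that lemma only gives $\langle(\text{profile }j),w_n^l\rangle\to 0$ for the \emph{listed} $\Gamma_n^j$, $1\le j\le l$, and says nothing about testing the remainder against a new parameter set. Strichartz smallness of $\widetilde w_n^l$ does not imply weak-$L^2$ smallness under an arbitrary symmetry, so from Theorem~\ref{thm:Airy-prof} and Lemma~\ref{le:weak-converg} alone you cannot exclude that $\overline{\phi^j}$ hides in $\widetilde w_n^l$ rather than appearing as some $\phi^{j_-}$. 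Even granting existence of a non-orthogonal $\Gamma_n^{k}$, the stronger claim of \emph{exact} matching $\Gamma_n^{j_-}=(h_n^j,-\xi_n^j,x_n^j,t_n^j)$ and $\phi^{j_-}=\overline{\phi^j}$ does not follow; you only get equivalence in the sense of Definition~\ref{def-ortho} and must then merge and absorb the discrepancy, which complicates the mass bookkeeping and the truncation in $l$. Making this rigorous forces you back into the two-stage construction (tracking $\xi\mapsto-\xi$ through Lemma~\ref{le:scale-core-complex} and using the $\mu$-functional of Lemma~\ref{le:concentr-compact} on each frequency block), at which point the argument essentially coincides with the paper's.
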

When $\lim_{n\to \infty}|h_n^j\xi_n^j|=\infty$ for some $1\le j\le l$, the profile will exhibit asymptotic ``Schr\"odinger" behavior. For simplicity, we just look at the complex case.
\begin{remark}[Asymptotic Schr\"odinger behavior]\label{re:airy-schr}
 Without loss of generality, we assume $\phi^j\in \S$ with the compact Fourier support $[-1,1]$. Then
\begin{align*}
&D^{1/6}e^{-(t-t_n^j)\partial_x^3}g_n^j[e^{i(\cdot)h_n^j\xi_n^j}\phi^{j}](x)=\int e^{i(x-x_n^j)\xi+i(t-t_n^j)\xi^3}|\xi|^{1/6}(h_n^j)^{1/2}\widehat{\phi^j}(h_n^j(\xi-\xi_n^j))d\xi\\
&=(h_n^j)^{-1/2}|\xi_n^j|^{1/6} e^{i(x-x_n^j)\xi_n^j+i(t-t_n^j)(\xi_n^j)^3}\\
&\qquad \times \int e^{i[\frac {\eta(x-x_n^j+3(t-t_n^j)(\xi_n^j)^2)}{h_n^j}+\frac {\eta^3(t-t_n^j)}{(h_n^j)^3}+\frac {3\eta^2(t-t_n^j)\xi_n^j}{(h_n^j)^2}]}|1+\frac {\eta}{h_n^j\xi_n^j}|^{1/6}\phi^j(\eta)d\eta.
\end{align*}
Setting $x':=\frac {x-x_n^j+3(t-t_n^j)(\xi_n^j)^2}{h_n^j}$ and $t':=\frac {3(t-t_n^j)\xi_n^j}{(h_n^j)^2}$. Then the dominated convergence theorem yields
\begin{align*}
\|&D^{1/6}e^{-(t-t_n^j)\partial_x^3}g_n^j[e^{i(\cdot)h_n^j\xi_n^j}\phi^{j}]\|_{L^6_{t,x}} \\
&=3^{-1/6}\|\int e^{ix'\eta+it'\eta^2}e^{it'\frac {\eta^3}{3h_n^j\xi_n^j}}|1+\frac
{\eta}{h_n^j\xi_n^j}|^{1/6}\widehat{\phi^j}d\eta\|_{L^6_{t',x'}} \\
&\to_{n\to \infty} 3^{-1/6}\|e^{-it'\partial_x^2}\phi^j\|_{L^6_{t',x'}},
\end{align*} where $e^{-it\partial_x^2}$ denotes the Schr\"odinger evolution operator defined via \begin{equation*}
e^{-it\partial_x^2}f(x):=\int_\R e^{ix\xi+it|\xi|^2}\widehat{f}(\xi)d\xi.
\end{equation*} Indeed, $$\int e^{ix'\eta+it'\eta^2}e^{it'\frac {\eta^3}{3h_n^j\xi_n^j}}|1+\frac
{\eta}{h_n^j\xi_n^j}|^{1/6}\widehat{\phi^j}d\eta\to e^{-it'\partial_x^2}\phi^j(x'),\,a.e., $$ and by using \cite[Corollary, p.334]{Stein:1993} or integration by parts, $$ \left|\int e^{ix'\eta+it'\eta^2}e^{it'\frac {\eta^3}{3h_n^j\xi_n^j}}|1+\frac {\eta}{h_n^j\xi_n^j}|^{1/6}\widehat{\phi^j}d\eta\right|\le C_{\phi^j} B(t',x')$$ for $n$ large enough but still uniform in $n$. Here
\begin{equation*}
 B(t',x')=\begin{cases} (1+|t'|)^{-1/2}\le C\bigl[(1+|x'|)(1+|t'|)\bigr]^{-1/4}, &\text{for } |x'|\le 6|t'|,\\
 (1+|x'|)^{-1}\le C\bigl[(1+|x'|)(1+|t'|)\bigr]^{-1/2}, &\text{for } |x'|>6|t'|. \end{cases}
 \end{equation*}
It is easy to observe that $B\in L^6_{t',x'}$.
\end{remark}

In the next three paragraphs, we outline the proof of Theorem \ref{thm:Airy-prof} in three steps; Theorem \ref{thm:Airy-prof-real} follows similarly. Given an $L^2$-bounded sequence $(u_n)_{n\ge 1}$, at the first step, we use the refined Strichartz inequality \eqref{eq:KPV-Strichartz} and an iteration argument to obtain a preliminary decomposition decomposition for $(u_n)_{n\ge 1}$: up to a subsequence,
$$u_n=\sum_{j=1}^N f_n^j+q_n^N,$$
where $\widehat{f_n^j}$ is supported on an interval $(\xi_n^j-\rho_n^j,\xi_n^j+\rho_n^j)$ and $|\widehat{f_n^j}|\le C(\rho_n^j)^{-1/2}$, and $e^{-t\partial_x^3}q_n^N$ is small in the Strichartz norm. Then we impose the orthogonality condition on $(\rho_n^j,\xi_n^j)$: for $j\neq k$,
 \begin{equation*}
 \lim_{n\to \infty}\left(\dfrac {\rho_n^j}{\rho_n^k}+\dfrac {\rho_n^k}{\rho_n^j}+\dfrac {|\xi_n^j-\xi_n^k|}{\rho_n^j}\right)=\infty,
\end{equation*}
to re-group the decomposition.

At the second step, for each $j\in[1,N]$, we will perform a further decomposition to $f_n^j$ to extract the space and time parameters. For simplicity, we suppress all the superscripts $j$ and re-scale $(f_n)_{n\ge 1}$ to obtain $P=(P_n)_{n\ge 1}$ by setting
$$\widehat{P_n}(\cdot):=\rho_n^{1/2}\widehat{f_n}\left(\rho_n(\cdot+\rho_n^{-1}\xi_n)\right),$$
from which we can infer that each $\widehat{P_n}$ is bounded and supported on a finite interval centered at the origin. We apply the concentration-compactness argument to $(P_n)_{n\ge1 }$ to extract $(y_n^{\alpha}, s_n^\alpha)$: for any $A\ge 1$, up to a subsequence,
\begin{equation}\label{eq:intro-1}
P_n(x)=\sum_{\alpha=1}^{A} e^{-ix\rho_n^{-1}\xi_n}
e^{s^{\alpha}_n\partial^3_x}[e^{i(\cdot)\rho_n^{-1}\xi_n}\phi^{\alpha}(\cdot)](x-y_n^{\alpha})+P^A_n(x).
\end{equation}
More precisely, we will investigate the set of weak limits, $$\mathcal{W}(P):=\{w-\lim_{n\to \infty}e^{-ix\rho_n^{-1}\xi_n}e^{-s_n\partial^3_x}
[e^{i(\cdot)\rho_n^{-1}\xi_n}P_{n}(\cdot)](x+y_n)\text{ in }L^2: (y_n, s_n)\in \R^2\},$$
where the notion $``w-\lim_{n\to\infty} f_n"$ denotes, up to a subsequence, the weak limit of $(f_n)_{n\ge 1}$ in $L^2$. Note that, due to the lack of Galilean transform and the additional multiplier weight in the current Strichartz norm, it is a slight but necessary modification to the Schr\"odinger case \cite{Carles-Keraani:2007:profile-schrod-1d}, where $\mathcal{W}(P)$ is the following set
$$\{w-\lim_{n\to \infty}e^{is_{n}\partial^2_x}P_{n}(x+y_n)\text{ in }L^2: (y_n, s_n)\in \R^2\}.$$
In \eqref{eq:intro-1}, we impose the orthogonality condition on $(y_n^\alpha,s_n^\alpha)$: for $\alpha\neq \beta$,
\begin{equation*}
\lim_{n\to\infty}\left(\left|y_n^{\beta}-y_n^{\alpha}+\dfrac {3(s_n^{\beta}-s_n^{\alpha})(\xi_n)^2}{(\rho_n)^2}\right|+\left|\dfrac
{3(s_n^{\beta}-s_n^{\alpha})\xi_n}{\rho_n}\right|+\left|s_n^{\beta}-s^{\alpha}_n\right|\right)=\infty.
\end{equation*}
The error term $P^A:=(P_n^A)_{n\ge 1}$ is small in the weak sense that
\begin{equation*}\lim_{A\to \infty}\mu(P^A):=\lim_{A\to\infty}\sup \{\|\phi\|_{L^2}: \phi\in \mathcal{W}(P^A)\}=0.
\end{equation*}
Since $f_n(x)=\sqrt{\rho_n}e^{ix\xi_n}P_n(\rho_nx)$,
\begin{equation*}
f_n(x)=\sum_{\alpha=1}^{A}\sqrt{\rho_n}e^{s^{\alpha}_n\partial^3_x}[e^{i(\cdot)\rho_n^{-1}\xi_n} \phi^{\alpha}(\cdot)](\rho_nx-y_n^{\alpha})+\sqrt{\rho_n}e^{ix\xi_n}P^A_n(\rho_nx).
\end{equation*}
Let $e_n^A:=\sqrt{\rho_n}e^{ix\xi_n}P^A_n(\rho_nx)$. Now the major task is to upgrading the previous weak convergence to
$$\lim_{A\to \infty}\lim_{n\to\infty}\|D^{1/6}e^{-t\partial_x^3}e_n^A\|_{L^6_{t,x}}=0.$$
To achieve this, we will interpolate $L^6_{t,x}$ between $L^q_{t,x}$ and $L^{\infty}_{t,x}$ for some $4\le q<6$. The $L^q_{t,x}$ norm is controlled by some localized restriction estimates and the $L^{\infty}_{t,x}$ norm is expected to be controlled by $\mu(P^A)$. Unlike the Schr\"odinger case, we will distinguish the case $\lim_{n\to \infty}|\rho_n^{-1}\xi_n|=+\infty $ from $\lim_{n\to \infty}|\rho_n^{-1}\xi_n|<+\infty$ due to the additional multiplier weight in the current Strichartz norm.

The final decomposition is obtained by setting $$(h_n^j,\xi_n^j,x_n^j,t_n^j):=((\rho_n^j)^{-1},\xi_n^j,(\rho_n^j)^{-1}y_n^j,(\rho_n^j)^{-3}s_n^j)$$
and showing two orthogonality results for the profiles.

\subsection{} The second part of this paper is devoted to applying the linear profile decomposition result to the problem of the existence of maximizers for the Airy Strichartz inequality. As a corollary of Theorems \ref{thm:Airy-prof} and \ref{thm:Airy-prof-real}, we will establish a dichotomy result. Denote \begin{equation}\label{eq:airy-max}
S_{airy}^\C:=\sup\{\|D^{1/6}e^{-t\partial_x^3}u_0\|_{L^6_{t,x}}:\|u_0\|_{L^2}=
1\}, \end{equation} when $u_0$ is complex-valued; similarly we define $S_{airy}^\R$ for real-valued initial data. We are interested in determining whether there exists a maximizing function $u_0$ with $\|u_0\|_{L^2}=1$ for which
$$\|D^{1/6}e^{-t\partial_x^3}u_0\|_{L^6_{t,x}}=S_{airy}\|u_0\|_{L^2},$$
where $S_{airy}$ represents either $S_{airy}^\C$ or $S_{airy}^\R$.
The analogous question to the Schr\"odinger Strichartz inequalities was studied by Kunze
\cite{Kunze:2003:maxi-strichartz-1d}, Foschi \cite{Foschi:2007:maxi-strichartz-2d}, Hundertmark-Zharnitsky \cite{Hundertmark-Zharnitsky:2006:maximizers-Strichartz-low-dimensions}, Carneiro \cite{Carneiro:2008:sharp-strichartz-norm}, Bennett-Bez-Carbery-Hundertmark  \cite{Bennett-Bez-Carbery-Hundertmark:2008:heat-flow-of-strichartz-norm} and the author
\cite{Shao:2008:maximizers-Strichartz-Sobolev-Strichartz}. We set
\begin{equation}\label{eq:schr-max}
S_{schr}^\C:=\sup\{\|e^{-it\Delta}u_0\|_{L^6_{t,x}(\R\times \R^d)}:\|u_0\|_{L^2(\R^d)}= 1\}.
\end{equation}
The fact $S_{schr}^\C<\infty$ is due to Strichartz \cite{Strichartz:1977} which in turn had precursors in \cite{Tomas:1975:restrict}. For the problem of existence of such optimal $S_{schr}^\C$ and explicitly characterizing the maximizers, Kunze \cite{Kunze:2003:maxi-strichartz-1d} treated the $d=1$ case and showed that maximizers exist by an elaborate concentration-compactness method. When $d=1,2$, Foschi \cite{Foschi:2007:maxi-strichartz-2d} explicitly determined the best constants and showed that the only maximizers are Gaussians up to the natural symmetries associated to the Strichartz inequality by using the sharp Cauchy-Schwarz inequality and the space-time Fourier transform. Hundertmark-Zharnitsky \cite{Hundertmark-Zharnitsky:2006:maximizers-Strichartz-low-dimensions} independently obtained this result by an interesting representation formula of the Strichartz inequalities in lower dimensions. Recently, Carneiro \cite{Carneiro:2008:sharp-strichartz-norm} proved a sharp Strichartz-type inequality by following the arguments in \cite{Hundertmark-Zharnitsky:2006:maximizers-Strichartz-low-dimensions} and found its maximizers, which derives the same results in \cite{Hundertmark-Zharnitsky:2006:maximizers-Strichartz-low-dimensions} as a corollary when $d=1,2$. Very recently, Bennett-Bez-Carbery-Hundertmark \cite{Bennett-Bez-Carbery-Hundertmark:2008:heat-flow-of-strichartz-norm} offered a new proof to determine the best constants by using the method of heat-flow. In \cite{Shao:2008:maximizers-Strichartz-Sobolev-Strichartz}, the author showed that a maximizer exists for all non-endpoint Strichartz inequalities and in all dimensions by relying on the recent linear profile decomposition results for the Schr\"odinger equations. We will continue this approach for
\eqref{eq:airy-max}. Additionally, we will use a simple but beautiful idea of asymptotic embedding of a NLS
solution to an approximate gKdV solution, which was previously exploited in \cite{Christ-Colliander-Tao:2003:asymptotics-modulation-canonical-defocusing-eqs} and
\cite{Tao:2007:scattering-quartic-gKdV}. This gives that in the complex case, $S_{schr}^\C\le 3^{1/6}S_{airy}^\C$ while in the real case, $S_{schr}^\C\le 2^{1/2}3^{1/6}S_{airy}^\R$.

\begin{theorem}\label{thm:airy-max} We have the following dichotomy on the existence of maximizers for \eqref{eq:airy-max} with the complex- or real- valued initial data, respectively:
\begin{itemize}
\item In the complex case, either a maximizer is attained for \eqref{eq:airy-max}, or there exists $\phi$ of complex value satisfying $\|\phi\|_{L^2}=1$ and $S_{schr}^\C=\|e^{-it\partial_x^2}\phi\|_{L^6_{t,x}}$, and a sequence $(a_n)_{n\ge 1}$ satisfying $\lim_{n\to\infty}|a_n|=\infty$ such that
 \begin{align*}
 \lim_{n\to\infty}\|D^{1/6}e^{-t\partial_x^3}[{e^{i(\cdot)a_n}}\phi]\|_{L^6_{t,x}}
 &=S_{airy}^\C,\\
S_{schr}^\C&=3^{1/6}S_{airy}^\C.
 \end{align*}

\item In the real case, a similar statement holds; more precisely, either a maximizer is attained for \eqref{eq:airy-max}, or there exists $\phi$ of complex value satisfying $S_{schr}^\C=\dfrac {\|e^{-it\partial_x^2}\phi\|_{L^6_{t,x}}}{\|\phi\|_{L^2}}$, and a positive sequence $(a_n)_{n\ge 1}$ satisfying $\lim_{n\to\infty}a_n=\infty $ and $\lim_{n\to\infty}\|\re(e^{i(\cdot)a_n}\phi)\|_{L^2}= 1$ such that
 \begin{align*}
 \lim_{n\to\infty}\|D^{1/6}e^{-t\partial_x^3}
 \re({e^{i(\cdot)a_n}}\phi)\|_{L^6_{t,x}}&=S_{airy}^\R,\\
S_{schr}^\C&=2^{1/2}3^{1/6}S_{airy}^\R.
 \end{align*}
\end{itemize}
\end{theorem}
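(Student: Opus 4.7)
The plan is to apply the linear profile decomposition of Theorem \ref{thm:Airy-prof} to a maximizing sequence and to show that a single profile must carry all the mass and all the Strichartz norm. The real case is handled analogously using Theorem \ref{thm:Airy-prof-real}, with the identity $\re(e^{i(\cdot)a_n}\phi) = \tfrac12(e^{i(\cdot)a_n}\phi + e^{-i(\cdot)a_n}\bar\phi)$ contributing an extra averaging factor via Riemann--Lebesgue and the $L^6$ decoupling of the two conjugate frequency pieces.

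Let $(u_n)_{n\ge1}$ satisfy $\|u_n\|_{L^2}=1$ and $\|D^{1/6}e^{-t\partial_x^3}u_n\|_{L^6_{t,x}}\to S_{airy}^\C$. Passing to a subsequence, Theorem \ref{thm:Airy-prof} writes $u_n = \sum_{j=1}^l V_n^j + w_n^l$ with $V_n^j := e^{t_n^j\partial_x^3}g_n^j[e^{i(\cdot)h_n^j\xi_n^j}\phi^j]$, and \eqref{eq:almost-ortho} gives $\sum_j\|\phi^j\|_{L^2}^2 + \lim_n\|w_n^l\|_{L^2}^2 \le 1$. The first substantive step is to upgrade this to a Strichartz almost-orthogonality
\begin{equation*}
\lim_{l\to\infty}\lim_{n\to\infty}\Bigl|\bigl\|D^{1/6}e^{-t\partial_x^3}u_n\bigr\|_{L^6_{t,x}}^6 - \sum_{j=1}^l \bigl\|D^{1/6}e^{-t\partial_x^3}V_n^j\bigr\|_{L^6_{t,x}}^6\Bigr|=0,
\end{equation*}
which, together with \eqref{eq:err}, reduces the question to summing the individual profile norms.

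Each profile's limiting Strichartz norm is pinned by one of two universal constants. If $\xi_n^j\equiv 0$, the symmetries $g_n^j$ and $e^{t_n^j\partial_x^3}$ preserve the norm exactly, so $\bigl\|D^{1/6}e^{-t\partial_x^3}V_n^j\bigr\|_{L^6_{t,x}}\le S_{airy}^\C\|\phi^j\|_{L^2}$. If instead $|h_n^j\xi_n^j|\to\infty$, Remark \ref{re:airy-schr} gives $\bigl\|D^{1/6}e^{-t\partial_x^3}V_n^j\bigr\|_{L^6_{t,x}}\to 3^{-1/6}\|e^{-it\partial_x^2}\phi^j\|_{L^6_{t,x}} \le 3^{-1/6}S_{schr}^\C\|\phi^j\|_{L^2}$. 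Invoking the asymptotic embedding $S_{schr}^\C\le 3^{1/6}S_{airy}^\C$ noted in the introduction (itself obtained by plugging $e^{i(\cdot)\xi_n}\phi$ with $\xi_n\to\infty$ into the Airy inequality and then applying Remark \ref{re:airy-schr}), every profile contributes at most $(S_{airy}^\C)^6\|\phi^j\|_{L^2}^6$ in the limit, so
\begin{equation*}
(S_{airy}^\C)^6 \le (S_{airy}^\C)^6\sum_j\|\phi^j\|_{L^2}^6 \le (S_{airy}^\C)^6\Bigl(\sum_j\|\phi^j\|_{L^2}^2\Bigr)^3 \le (S_{airy}^\C)^6.
\end{equation*}
Equality throughout forces $\sum_j\|\phi^j\|_{L^2}^2=1$, $w_n^l\to 0$ in $L^2$, and $\sum_j\|\phi^j\|_{L^2}^6 = (\sum_j\|\phi^j\|_{L^2}^2)^3$, which is only possible if exactly one index $j_0$ has $\|\phi^{j_0}\|_{L^2}=1$. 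If $\xi_n^{j_0}\equiv0$, then $\phi^{j_0}$ is itself an Airy maximizer. If $|h_n^{j_0}\xi_n^{j_0}|\to\infty$, equality in the chain forces $\|e^{-it\partial_x^2}\phi^{j_0}\|_{L^6_{t,x}}=S_{schr}^\C$ and $S_{schr}^\C=3^{1/6}S_{airy}^\C$, yielding the second alternative with $a_n:=h_n^{j_0}\xi_n^{j_0}$.

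The principal obstacle is the $L^6$ Strichartz almost-orthogonality itself. For pairs of profiles with asymptotically incomparable $(h_n^j,\xi_n^j)$ (first clause of Definition \ref{def-ortho}), $D^{1/6}e^{-t\partial_x^3}V_n^j$ is essentially frequency-localized in a window of width $(h_n^j)^{-1}$ around $\xi_n^j$, so expanding $|\sum_j V_n^j|^6$ and integrating leaves vanishing cross terms by a bilinear or Plancherel argument on the Fourier side. For profiles sharing $(h,\xi)$ but obeying the second clause, the dispersive separation in the rescaled $(x,t)$ variables pushes the bulk supports of $D^{1/6}e^{-t\partial_x^3}V_n^j$ into asymptotically disjoint space-time regions (modulo a tail controlled by the Schwartz decay of $\phi^j$, after a harmless approximation of $\phi^j$ by a Schwartz function), again killing the cross terms.
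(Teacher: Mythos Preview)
Your proposal is correct and follows the same architecture as the paper: apply the profile decomposition to a maximizing sequence, invoke the $L^6$ Strichartz almost-orthogonality of the profiles (the paper's Lemma \ref{le:weak-converg-2}), bound each profile term by $S_{airy}^\C\|\phi^j\|_{L^2}$, and use Lemma \ref{le:embedding-schr-airy} and Remark \ref{re:airy-schr} for the Schr\"odinger alternative. The only genuine difference is in how you isolate a single profile. The paper picks the index $j_0$ with the largest Strichartz norm and writes
\[
(S_{airy}^\C)^6-2\eps \;\le\; \bigl\|D^{1/6}e^{-t\partial_x^3}V_n^{j_0}\bigr\|_{L^6}^{4}\sum_{j\le l}\bigl\|D^{1/6}e^{-t\partial_x^3}V_n^{j}\bigr\|_{L^6}^{2}
\;\le\; (S_{airy}^\C)^2\bigl\|D^{1/6}e^{-t\partial_x^3}V_n^{j_0}\bigr\|_{L^6}^{4},
\]
then argues separately that $j_0$ is bounded independently of $l,n,\eps$ before sending $\eps\to0$. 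You instead go straight through the $\ell^6\hookrightarrow\ell^2$ inequality $\sum_j\|\phi^j\|_{L^2}^6\le(\sum_j\|\phi^j\|_{L^2}^2)^3$, whose equality case immediately forces a unique surviving index. This is a cleaner bookkeeping and avoids the auxiliary argument bounding $j_0$; conversely, the paper's route never needs to first invoke Lemma \ref{le:embedding-schr-airy} to cap the Schr\"odinger-type profiles by $S_{airy}^\C\|\phi^j\|_{L^2}$, since the Airy Strichartz inequality already gives that bound for every $n$ directly. Your sketch of the $L^6$ decoupling (frequency separation for the first clause of Definition \ref{def-ortho}, space-time separation for the second) matches the paper's proof of Lemma \ref{le:weak-converg-2}.
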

\begin{remark}Note that when $S_{schr}^\C=3^{1/6}S_{airy}^\C$or $S_{schr}^\C=2^{1/2}3^{1/6}S_{airy}^\R$, the explicit $\phi$ had been uniquely determined by Foschi \cite{Foschi:2007:maxi-strichartz-2d} and Hundertmark-Zharnitsky \cite{Hundertmark-Zharnitsky:2006:maximizers-Strichartz-low-dimensions} independently: they are Gaussians up to the natural symmetries enjoyed by the Strichartz inequality for the Schr\"odinger equation.
\end{remark}
This paper is organized as follows: in Section \ref{sec:notations}
we establish some notations. In Section
\ref{sec:decomp-complx}, we make a preliminary decomposition for
an $L^2$-bounded sequence $(u_n)_{n\ge 1}$ of complex
value. In Section \ref{sec:decomp-real}, we obtain similar results
for a real sequence. In Section \ref{sec:proof-airy-prof}, we
prove Theorems \ref{thm:Airy-prof} and \ref{thm:Airy-prof-real}.
In section \ref{sec:airy-maxi}, we prove Theorem
\ref{thm:airy-max}.

\textbf{Acknowledgments.} The author is grateful to Terence Tao for many helpful discussions. The author would like to thank Jincheng Jiang and Monica Visan for their comments. The author also thanks the anonymous referees for their valuable comments and suggestions, which have been incorporated into this paper.

\section{Notation}\label{sec:notations}
We use $X\lesssim Y$, $Y\gtrsim X$, or $X=O(Y)$ to denote the estimate $|X|\le
C Y$ for some constant $0<C<\infty$, which might depend on the dimension but not on the functions. If $X\lesssim Y$ and $Y\lesssim X$ we will write $X\sim Y$. If the constant $C$ depends on a special parameter, we shall denote it explicitly by subscripts.

We define the space-time norm $L^q_tL^r_x$ of $f$ on $\R\times \R$ by
$$\|f\|_{L^q_tL^r_x(\R\times\R)}:=\left(\int_{\R}\left(\int_{\R}
|f(t,x)|^{r}d\,x\right)^{q/r}d\,t\right)^{1/q},$$ with the usual modifications when $q$ or $r$ are
equal to infinity, or when the domain $\R\times\R$ is replaced by a small space-time region. When $q=r$, we abbreviate it by $L^{q}_{t, x}$. Unless specified, all the space-time integrations are taken over $\R\times \R$, and all the spatial integrations over $\R$.

We fix the notation that $\lim_{n\to \infty}$ should be understood as $\limsup_{n\to \infty}$ throughout this paper.

The spatial Fourier transform is defined via
$$\widehat{u_0}(\xi):=\int_\R e^{-ix\xi}u_0(x)dx;$$
the space-time Fourier transform is defined analogously.

The Airy evolution operator $e^{-t\partial_x^3}$ is defined via
$$e^{-t\partial_x^3}u_0(x):=\int_\R e^{ix\xi+it\xi^3}\widehat{u_0}(\xi)d\xi.$$

The spatial derivative $\partial_x^k$, $k\in \N$, the set of positive integers, is defined via the Fourier transform,
$$\widehat{\partial_x^k}(\xi)=(i\xi)^k.$$

The fractional differentiation operator $D^\alpha$, $\alpha\in \R$, is defined via
\begin{equation*}
D^\alpha f(x):=\int_\R e^{ix\xi}|\xi|^\alpha \widehat{f}(\xi)d\xi.
\end{equation*}

The inner product $\langle\cdot,\cdot\rangle_{L^2}$ in the Hilbert space $L^2$ is defined via $$\langle f,g\rangle_{L^2}:=\int_{\R}f(x)\overline{g}(x)dx,$$
where $\overline{g}$ denotes the usual complex conjugate of $g$ in the complex plane $\C$.

\section{Preliminary decomposition:~complex version}\label{sec:decomp-complx}
To begin proving Theorems \ref{thm:Airy-prof} and \ref{thm:Airy-prof-real}, we present a new proof of the refined Strichartz inequality \eqref{eq:KPV-Strichartz} based on the Whitney decomposition. The following notation is taken from \cite{Killip-Visan:2008:clay-lecture-notes}.
\begin{definition}\label{def:dyadic-cubes}
Given $j\in \Z$, we denote by $\mathcal{D}_j$ the set of all dyadic intervals in $\R$ of length $2^j$:
$$\mathcal{D}_j:=\{2^j[k,k+1):k\in \Z\}.$$
We also write $\mathcal{D}:=\cup_{j\in \Z} \mathcal{D}_j$. Given $I\in \mathcal{D}$, we define $f_I$ by $\widehat{f}_I:=\widehat{f}1_I$ where $1_I$ denotes the indicator function of $I$.
\end{definition}
Then the Whitney decomposition we need is as follows: Given two distinct $\xi,\xi'\in \R$, there is a unique  maximal pair of dyadic intervals $I\in \mathcal{D}$ and $I'\in \mathcal{D}$ such that
\begin{equation}\label{eq:dc-1}
|I|=|I'|, \operatorname{dist}(I, I')\ge 4|I|,
\end{equation}
where $\operatorname{dist}(I,I')$ denotes the distance between $I$ and $I'$, and $|I|$ denotes the length of the dyadic interval $I$. Let $\mathcal{F}$ denote all such pairs as $\xi\neq \xi'$ varies over $\R\times \R$. Then we have
\begin{equation}\label{eq:dc-2}
\sum_{(I,I')\in \mathcal{F}}1_I(\xi)1_{I'}(\xi')=1, \text{ for a.e. } (\xi,\xi')\in \R\times \R.
\end{equation}
Since $I$ and $I'$ are maximal, $\operatorname{dist}(I,I')\le 10|I|$. This shows that for a given $I\in \mathcal{D}$, there exists a bounded number of $I'$ so that $(I,I')\in \mathcal{F}$, i.e.,
\begin{equation}\label{eq:dc-3}\forall I\in \mathcal{D}, \#\{I':(I,I')\in \mathcal{F}\}\lesssim 1.
\end{equation}
\begin{proof}[Proof of Lemma \ref{le:KPV-Strichartz}]
Given $p>1$, we normalize $\sup_{\tau\in \R}|\tau|^{1/2-1/p}\|\widehat{f}\|_{L^p(\tau)}=1$. Then for all dyadic intervals $I\in \mathcal{D}$, \begin{equation}\label{eq:dc-4}
\int_I |\widehat{f}|^pd\xi\le |I|^{1-p/2}.
\end{equation}
We square the left hand side of \eqref{eq:KPV-Strichartz} and reduce to proving
\begin{equation}\label{eq:dc-5}
\left\|\int\int e^{ix(\xi-\eta)+it(\xi^3-\eta^3)}|\xi\eta|^{1/6}\widehat{f}(\xi)\overline{\widehat{f}}(\eta)d\xi d\eta\right\|_{L^3_{t,x}}\lesssim\|\widehat{f}\|^{4/3}_{L^2}.
\end{equation}
We change variables $a:=\xi-\eta$ and $b:=\xi^3-\eta^3$ and use the Hausdorff-Young inequality in both $t$ and $x$, we need to show
\begin{equation}\label{eq:dc-6}
\int\int \frac {|\xi\eta|^{1/4}|\widehat{f}(\xi)\widehat{f}(\eta)|^{3/2}}{|\xi+\eta|^{1/2}|\xi-\eta|^{1/2}}d\xi d\eta\lesssim \int |\widehat{f}|^2d\xi.
\end{equation}
By symmetries of this expression, it is sufficient to work in the region $\{(\xi,\eta):\xi\ge 0,\eta\ge 0\}$. In this case, $|\xi\eta|^{1/4}\lesssim |\xi+\eta|^{1/2}$; so we reduce to proving
\begin{equation}\label{eq:dc-7}
\int\int \frac {|\widehat{f}(\xi)\widehat{f}(\eta)|^{3/2}}{|\xi-\eta|^{1/2}}d\xi d\eta\lesssim \int |\widehat{f}|^2d\xi.
\end{equation}
In view of \eqref{eq:dc-7}, we assume $\widehat{f}\ge 0$ from now on. Then we apply the Whitney decomposition to obtain
\begin{equation}\label{eq:dc-8}
\widehat{f}(\xi)\widehat{f}(\eta)=\sum_{(I,I')\in \mathcal{F}} \widehat{f_I}(\xi)\widehat{f}_{I'}(\eta), \text{ for a. e. }
(\xi,\eta) \in \R\times \R,
\end{equation}
and \begin{equation}\label{eq:dc-9}
\forall (\xi,\eta)\in I\times I' \text{ with } (I,I')\in \mathcal{F}, |\xi-\eta|\sim |I|.
\end{equation}
Choose a slightly larger dyadic interval containing both $I$ and $I'$ but still of length comparable to $I$, still denoted by $I$, we reduce to proving
\begin{equation}\label{eq:dc-10}
\sum_{I\in \mathcal{D}}\frac {\left(\int \widehat{f_I}^{3/2}d\xi\right)^2}{|I|^{1/2}}\lesssim \int \widehat{f}~^2d\xi.
\end{equation}
To prove \eqref{eq:dc-10} we will make a further decomposition to $f_I=\sum_{n\in \Z} f_{n,I}$: for any $n\in \Z$, define $f_{n,I}$ via
$$\widehat{f_{n,I}}:=\widehat{f}1_{\{\xi: \,2^n |I|^{-1/2}\le \widehat{f}(\xi)\le 2^{n+1}|I|^{-1/2}\}}.$$
By the Cauchy-Schwarz inequality, for any $\eps>0$,
\begin{equation}\label{eq:dc-11}
\left(\int \widehat{f_I}^{3/2}d\xi\right)^2=\left(\sum_{n\in \Z} \int \widehat{f_{n,I}}^{3/2}d\xi\right)^2
\lesssim_{\eps} \sum_{n\in \Z} 2^{|n|\eps}\left(\int \widehat{f_{n,I}}^{3/2}d\xi\right)^2.
\end{equation}
Now \eqref{eq:dc-10} is an easy consequence of the following claim:
\begin{equation}\label{eq:dc-12}
 \sum_{I\in \mathcal{D}} \frac {\left(\int \widehat{f_{n,I}}^{3/2}d\xi\right)^2}{|I|^{1/2}}
 \lesssim  2^{-|n|\eps}\int \widehat{f}~^2d\xi, \text{ for some }\eps>0 .
\end{equation}
By the Cauchy-Schwarz inequality,
\begin{equation}\label{eq:dc-13}
\left(\int\widehat{f_{n,I}}^{3/2}d\xi\right)^2\lesssim \int \widehat{f_{n,I}}^2 d\xi \int \widehat{f_{n,I}}d\xi.
\end{equation}
On the one hand, when $n\ge 0$, by the Chebyshev's inequality and \eqref{eq:dc-4},
 \begin{align*}
\int \widehat{f_{n,I}}d\xi &\lesssim 2^n|I|^{-1/2}|\{\xi\in I:\widehat{f}(\xi)\ge 2^n|I|^{-1/2}\}|\\
&\lesssim 2^n|I|^{-1/2}\frac {\int_I \widehat{f}^pd\xi}{2^{np}|I|^{-p/2}}\\
&\lesssim 2^{n(1-p)}|I|^{-1/2} |I|^{p/2}|I|^{1-p/2}\\
&=2^{-|n|(p-1)}|I|^{1/2}
\end{align*}
for any $p>1$. On the other hand, when $n<0$,
\begin{equation*}
\int \widehat{f_{n,I}}d\xi \lesssim 2^n|I|^{-1/2} |I|=2^{-|n|}|I|^{1/2}.
\end{equation*}
Combining these estimates, there exists an $\eps>0$ such that
\begin{equation}\label{eq:dc-14}
\sum_{I\in \mathcal{D}} \frac {\left(\int \widehat{f_{n,I}}^{3/2}d\xi\right)^2}{|I|^{1/2}}\lesssim
2^{-|n|\eps} \sum_{I\in \mathcal{D}}\int \widehat{f_{n,I}}^{2}d\xi.
\end{equation}
Interchanging the summation order, we have
\begin{equation}\label{eq:dc-15}
\sum_{I\in \mathcal{D}}\int \widehat{f_{n,I}}^{2}d\xi=\sum_{j\in \Z}\sum_{I\in \mathcal{D}_j}\int \widehat{f}~^21_{\{\xi\in I: \widehat{f}\sim 2^{n-j/2}\}}d\xi=\int_{\R} \sum_{j: \widehat{f} \sim 2^{n-j/2}}\widehat{f}~^2 d\xi\lesssim \int\widehat{f}~^2d\xi.
\end{equation}
Then the claim \eqref{eq:dc-12} follows from \eqref{eq:dc-14} and \eqref{eq:dc-15}. Hence the proof of Lemma \ref{le:KPV-Strichartz} is complete.
\end{proof}
By using this refined Airy Strichartz inequality \eqref{eq:KPV-Strichartz}, we extract the scaling and frequency parameters $\rho_n^j$ and $\xi_n^j$ following the approach in \cite{Carles-Keraani:2007:profile-schrod-1d}.
\begin{lemma}[Complex version:~extraction of $\rho_n^j$ and $\xi_n^j$]\label{le:scale-core-complex}
Let $(u_n)_{n\ge 1}$ be a sequence of complex valued functions with $\|u_n\|_{L^2}\le 1$. Then up to a subsequence, for any $\delta>0$, there exists $N:=N(\delta)$, a family $(\rho_n^j, \xi_n^j)_{1\le j\le N\atop n\ge 1}\in (0,\infty)\times \R$ and a family $(f_n^j)_{1\le j\le N\atop n\ge 1}$ of $L^2$-bounded sequences such that,
if $j\neq k$, \begin{equation}\label{eq:ortho-0}\lim_{n\to \infty}\left(\dfrac
{\rho_n^j}{\rho_n^k}+\dfrac {\rho_n^k}{\rho_n^j}+\dfrac {|\xi_n^j-\xi_n^k|}
{\rho_n^j}\right)=\infty,
\end{equation}
for every $1\le j\le N$, there exists a compact K in $\R$ such that
\begin{equation}\label{eq:suppt-contrl}
\sqrt{\rho_n^j} |\widehat{f_n^j}(\rho_n^j\xi+\xi_n^j)|\le C_\delta1_K(\xi),
\end{equation}
and \begin{equation}\label{eq:prof-0}
u_n=\sum_{j=1}^{N}f^j_n+q_n^N,
\end{equation} which satisfies
\begin{equation}\label{eq:err-0} \|D^{\frac 16}e^{-t\partial_x^3}q_n^N\|_{L^6_{t,x}}\le \delta,\end{equation}
and
\begin{equation}\label{eq:almost-ortho-0}
\lim_{n\to\infty}\left(\|u_n\|^2_{L^2}-\left(\sum_{j=1}^{N}\|f^j_n\|^2_{L^2}+\|q_n^N\|^2_{L^2}
\right)
\right)=0.
\end{equation}
\end{lemma}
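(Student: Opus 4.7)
The plan is an iterative extraction driven by the refined inequality \eqref{eq:KPV-Strichartz}. Fix $\delta>0$ and some $p\in(1,2)$, and set $q_n^0:=u_n$. At step $j\ge 1$, if, after passing to a subsequence, $\|D^{1/6}e^{-t\partial_x^3}q_n^{j-1}\|_{L^6_{t,x}}\le \delta$ for all large $n$, I stop with $N=j-1$; otherwise Lemma \ref{le:KPV-Strichartz} applied to $q_n^{j-1}$ produces intervals $\tau_n^j\subset \R$ with
\[
|\tau_n^j|^{1/2-1/p}\|\widehat{q_n^{j-1}}\|_{L^p(\tau_n^j)}\gtrsim \delta^3,
\]
and I set $\rho_n^j:=|\tau_n^j|$, with $\xi_n^j$ the center of $\tau_n^j$.

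The core step is to extract from $\widehat{q_n^{j-1}}|_{\tau_n^j}$ a piece $f_n^j$ satisfying \eqref{eq:suppt-contrl} while also ensuring $\|f_n^j\|_{L^2}^2\gtrsim_\delta 1$. I would use a dyadic level-set pigeonhole: writing $B_k:=\tau_n^j\cap\{2^k(\rho_n^j)^{-1/2}<|\widehat{q_n^{j-1}}|\le 2^{k+1}(\rho_n^j)^{-1/2}\}$ and $\beta_k:=|B_k|/\rho_n^j$, Chebyshev against $\|q_n^{j-1}\|_{L^2}\le 1$ gives $\beta_k\le 4^{-k}$, while the $L^p$ lower bound reads $\sum_k 2^{pk}\beta_k\gtrsim \delta^{3p}$. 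Since $1<p<2$, the tails $|k|>K(\delta)\sim \log(1/\delta)$ contribute at most $\tfrac12\delta^{3p}$ by geometric summation, so pigeonhole over the remaining $O(\log(1/\delta))$ indices yields $k_0$ with $2^{pk_0}\beta_{k_0}\gtrsim_\delta 1$. I set $\widehat{f_n^j}:=\widehat{q_n^{j-1}}\,1_{B_{k_0}}$ and $q_n^j:=q_n^{j-1}-f_n^j$. By construction $|\widehat{f_n^j}|\le 2^{K(\delta)+1}(\rho_n^j)^{-1/2}$ with support in $\tau_n^j$, which after rescaling gives \eqref{eq:suppt-contrl} with $K=[-1/2,1/2]$ and $C_\delta$ polynomial in $1/\delta$; the same pigeonhole yields $\|f_n^j\|_{L^2}^2\ge 4^{k_0}\beta_{k_0}\gtrsim_\delta 1$. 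Since the Fourier supports of $f_n^j$ and $q_n^j$ are disjoint, Plancherel delivers the exact orthogonality $\|q_n^{j-1}\|_{L^2}^2=\|f_n^j\|_{L^2}^2+\|q_n^j\|_{L^2}^2$, which forces termination at some $N\le N(\delta)$ and, iterated over $j$, produces \eqref{eq:almost-ortho-0}; \eqref{eq:err-0} is exactly the termination criterion.

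To upgrade to the orthogonality \eqref{eq:ortho-0}, I would pass to a further subsequence so that, for each of the finitely many pairs $j\ne k$, the sequence $\rho_n^j/\rho_n^k+\rho_n^k/\rho_n^j+|\xi_n^j-\xi_n^k|/\rho_n^j$ has a definite limit in $[0,\infty]$. If the limit is finite for some pair, then $\tau_n^j$ and $\tau_n^k$ lie inside a common interval of comparable length, so I merge these two profiles by summing their Fourier transforms; the height and support bounds persist with slightly enlarged $K$ and $C_\delta$, and merging preserves orthogonality against every other profile because the merged parameters agree with one of the original ones. Since each merge strictly decreases the profile count, after finitely many merges \eqref{eq:ortho-0} holds.

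The principal obstacle is the dyadic pigeonhole step: one must balance the $L^2$ and $L^p$ constraints on the level sets $B_k$ to produce a single $k_0$ whose associated piece $\widehat{q_n^{j-1}}\,1_{B_{k_0}}$ has both bounded height $\lesssim_\delta (\rho_n^j)^{-1/2}$ and nontrivial $L^2$ mass $\gtrsim_\delta 1$, with the gap $1<p<2$ used essentially to sum the tails on both ends. The merging at the end is more routine but has to be carried out carefully so that \eqref{eq:suppt-contrl} survives with updated constants depending only on $\delta$.
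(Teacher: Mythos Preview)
Your proposal is correct and follows the same architecture as the paper: iterate the refined Strichartz inequality \eqref{eq:KPV-Strichartz} to peel off Fourier pieces with controlled height and support and nontrivial $L^2$ mass, use the uniform $L^2$ decrement to force termination in $N(\delta)$ steps, and then regroup non-orthogonal pieces (which is legitimate because non-orthogonality is transitive) to obtain \eqref{eq:ortho-0}. The one place where the paper is more economical is the extraction step: rather than your dyadic level-set pigeonhole, it simply takes $\widehat{v_n^j}:=\widehat{q_n^{j-1}}\,1_{\tau_n^j\cap\{|\widehat{q_n^{j-1}}|\le C_\delta(\rho_n^j)^{-1/2}\}}$, uses Chebyshev (via the $L^2$ bound) to show the discarded high part carries at most half of the $L^p$ mass on $\tau_n^j$, and then applies H\"older on the remaining set to convert the $L^p$ lower bound directly into $\|v_n^j\|_{L^2}^2\gtrsim \delta^6$---so no pigeonholing over levels is needed.
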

\begin{proof}
For  $\gamma_n=(\rho_n,\xi_n)\in (0,\infty)\times \R$, we define $G_n: L^2\to L^2$ by setting
$$G_n(f)(\xi):=\sqrt{\rho_n}f(\rho_n\xi+\xi_n).$$
We will induct on the Strichartz norm. If $\|D^{\frac 16}e^{-t\partial_x^3}u_n\|_{L^6_{t,x}}
\le \delta$, then there is nothing to prove. Otherwise, up to a subsequence, we have $$\|D^{\frac 16}e^{-t\partial_x^3} u_n\|_{L^6_{t,x}}> \delta.$$
On the one hand, applying Lemma \ref{le:KPV-Strichartz} with $p=\frac 43$, we see that there exists a family of intervals $I_n^1:=[\xi^1_n-\rho_n^1, \xi^1_n+\rho_n^1]$ such that
$$\int_{I^1_n} |\widehat{u_n}|^{4/3}d\xi\ge C_1 \delta^4 (\rho^1_n)^{\frac 13},$$
where $C_1$ only depends on $C$, the constant in Lemma \ref{le:KPV-Strichartz}; note that we have used $\|u_n\|_{L^2}\le 1$ here.
On the other hand, for any $A>0$, $$\int_{I^1_n\cap \{|\widehat{u_n}|>A\}}|\widehat{u_n}|^{4/3}d\xi \le A^{-\frac 23}\|\widehat{u_n}\|^2_{L^2}\le A^{-\frac 23}.$$
Let $C_\delta:=(\frac {C_1}{2})^{-3/2}\delta^{-6}$. Then from the two considerations above, we have
$$\int_{I^1_n\cap \{|\widehat{u_n}|\le C_\delta(\rho_n^1)^{-1/2}\}}|\widehat{u_n}|^{4/3}d\xi \ge \frac {C_1}{2}\delta^4 (\rho^1_n)^{1/3}.$$
From the H\"older inequality, we have
$$\int_{I^1_n\cap \{|\widehat{u_n}|\le C_\delta(\rho_n^1)^{-1/2}\}}|\widehat{u_n}|^{\frac 43}d\xi\le C_2 \left(\int_{I^1_n\cap \{|\widehat{u_n}|\le C_\delta(\rho_n^1)^{-1/2}\}}|\widehat{u_n}|^2d\xi\right)^{2/3}(|I^1_n|)^{1/3}.$$
This yields that
$$\int_{I^1_n\cap\{|\widehat{u_n}|\le C_\delta(\rho^1_n)^{-1/2}\}}|\widehat{u_n}|^{2}d\xi\ge C'\delta^6,$$
where $C'>0$ is some constant depending only on $C_1$ and $C_2$. Define $v^1_n$ and $\gamma_n^1$ by $$\widehat{v_n^1}:=\widehat{u_n}1_{I^1_n\cap\{|\widehat{u_n}|\le C_\delta(\rho^1_n)^{-1/2}\}}, \gamma_n^1:=(\rho_n^1, \xi^1_n).$$
Then $\|v^1_n\|_{L^2}\ge (C')^{1/2}\delta^3$. Also by the definition of $G$, we have
$$|G_n^1(\widehat{v_n^1})(\xi)|=|(\rho^1_n)^{1/2}\widehat{v_n^1} (\rho^1_n\xi+\xi^1_n)|\le C_\delta1_{[-1,1]}(\xi).$$ Moreover, since the supports are disjoint on the Fourier side, we have $$\|u_n\|^2_{L^2}=\|u_n-v_n^1\|^2_{L^2}+\|v_n^1\|^2_{L^2}.$$
We repeat the same argument with $u_n-v_n^1$ in place of $u_n$. At each step, the $L^2$-norm
decreases by at least $(C')^{1/2}\delta^3$. Hence after $N:=N(\delta)$ steps, we obtain that $(v_n^j)_{1\le j\le N}$ and $(\gamma_n^j)_{1\le j\le N}$ so that
\begin{align*}u_n&=\sum_{j=1}^N v_n^j+ q_n^N,\\
\|u_n\|^2_{L^2}&=\sum_{j=1}^N\|v_n^j\|^2_{L^2}+\|q_n^N\|^2_{L^2},
\end{align*}
where the latter equality is due to the disjoint Fourier supports. We have the error term estimate $$\|D^{ \frac 16}e^{-t\partial_x^3}q_n^N\|_{L^6_{t,x}}\le \delta, $$
which gives \eqref{eq:err-0}. The properties we obtain now are almost the case except for the first point of this lemma \eqref{eq:ortho-0}. To obtain it, we will re-organize the decomposition. We impose the following condition on $\gamma_n^j:=(\rho_n^j,\xi_n^j)$: $\gamma_n^j$ and $\gamma_n^k$ are orthogonal if $$\lim_{n\to \infty}\left(\frac {\rho_n^j}{\rho_n^k}+\frac {\rho_n^k}{\rho_n^j}+\frac
{|\xi_n^j-\xi_n^k|}{\rho_n^j}\right)=\infty.$$
Then we define $f_n^1$ to be a sum of those $v_n^j$ whose $\gamma_n^j$'s are not orthogonal to $\gamma_n^1$. Then taking the least $j_0\in [2,N]$ such that $\gamma_n^{j_0}$ is orthogonal to $\gamma_n^1$, we can define $f_n^2$ to be a sum of those $v_n^j$ whose $\gamma_n^j$'s are orthogonal to $\gamma_n^1$ but not to $\gamma_n^{j_0}$. Repeating this argument a finite number of times, we obtain \eqref{eq:prof-0}. This decomposition automatically gives \eqref{eq:ortho-0}. Since the supports of the functions are disjoint on the Fourier side, we also have \eqref{eq:almost-ortho-0}. Finally we want to  make sure that,  up to a subsequence, \eqref{eq:suppt-contrl} holds.

By construction, those $v_n^j$'s kept in the definition of $f_n^1$ are such that the $\gamma_n^j$'s are not orthogonal to $\gamma_n^1$, i.e., for those $j$, we have
 \begin{equation}\label{eq:loc-1}
 \lim_{n\to \infty}\frac {\rho_n^j}{\rho_n^1}+\frac {\rho_n^1}{\rho_n^j}<\infty,\quad
 \lim_{n\to \infty}\frac {|\xi_n^j-\xi_n^1|}{\rho_n^j}<\infty.
 \end{equation}
To show \eqref{eq:suppt-contrl}, it is sufficient to show that, up to a subsequence, $G_n^1(\widehat{v_n^j})$ is bounded by a compactly supported and bounded function, which will imply \eqref{eq:suppt-contrl} with $j=1$. On the one hand, by construction, $$|G_n^j(\widehat{v_n^j})|\le C_\delta1_{[-1,1]}.$$ On the other hand, we observe that \begin{align*}
G_n^1(\widehat{v_n^j})&=G^1_n(G^j_n)^{-1}G^j_n (\widehat{v_n^j}),\\
G_n^1(G_n^j)^{-1} f(\xi)&=\sqrt{\frac {\rho_n^1}{\rho_n^j}}f(\frac{\rho_n^1}{\rho_n^j}\xi+\frac {\xi_n^1-\xi_n^j}{\rho_n^j}),
\end{align*}
which yields the desired estimate for $G_n^1(\widehat{v_n^j})$ by \eqref{eq:loc-1}. Inductively we obtain \eqref{eq:ortho-0}. Hence the proof of Lemma \ref{le:scale-core-complex} is complete.
\end{proof}

The following lemma is useful in upgrading the weak convergence of error terms to the strong
convergence in the Strichartz norm in Lemma \ref{le:spa-time}.
\begin{lemma}\label{le:restriction}
We have the following two localized restriction estimates: for $9/2<q<6$ and $\widehat{G}\in L^{\infty}(B(0,R))$ for some $R>0$,
\begin{equation}\label{eq:restri-esti-1}
\|D^{1/6}e^{-t\partial_x^3}G\|_{L_{t,x}^{q}}\le
 C_{q,R}\|\widehat{G}\|_{L^{\infty}}.
\end{equation}
For the same $G$, $4\le q<6$ and $|\xi_0|\ge 10 R$,
\begin{equation}\label{eq:restri-esti-2}
  \|e^{-t\partial_x^3}(e^{i(\cdot)\xi_0}G)\|_{L_{t,x}^{q}}\le C_{q,R}  |\xi_0|^{-1/q}\|\widehat{G}\|_{L^{\infty}}.
\end{equation}
\end{lemma}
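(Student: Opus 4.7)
My plan is to deduce both estimates from pointwise oscillatory--integral bounds on the relevant evolution, followed by integration in $(t,x)$. The compact Fourier support of $G$ (respectively of $e^{i(\cdot)\xi_0}G$) supplies a compactly supported amplitude in the defining Fourier integral, which is the mechanism that lets us replace Strichartz by finer stationary--phase/van der Corput arguments in a range of $L^q_{t,x}$ strictly smaller than the $L^6_{t,x}$ afforded by Lemma \ref{le:KPV-Strichartz}.

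For the second estimate the argument is essentially the computation already performed in Remark \ref{re:airy-schr}. Writing $\widehat{e^{i(\cdot)\xi_0}G}(\xi)=\widehat G(\xi-\xi_0)$, substituting $\eta:=\xi-\xi_0$ and expanding $(\xi_0+\eta)^3$ yields
$$|e^{-t\partial_x^3}(e^{i(\cdot)\xi_0}G)(x)|=\left|\int e^{i((x+3t\xi_0^2)\eta+3t\xi_0\eta^2+t\eta^3)}\widehat G(\eta)\,d\eta\right|,$$
with $|\eta|\le R\le|\xi_0|/10$. The linear change of variables $y:=x+3t\xi_0^2$, $s:=3t\xi_0$ has Jacobian $|3\xi_0|$ and converts the integral into exactly the function $F(y,s)$ studied in Remark \ref{re:airy-schr}: the residual cubic $t\eta^3=(s/(3\xi_0))\eta^3$ is an $O(|\xi_0|^{-1})$ perturbation of the Schr\"odinger phase $y\eta+s\eta^2$. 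The corollary from \cite[p.334]{Stein:1993} cited in the remark then furnishes the pointwise bound $|F(y,s)|\le C_R\|\widehat G\|_\infty B(y,s)$, where $B$ is the function displayed there, and a direct computation shows $\|B\|_{L^q_{y,s}}<\infty$ for every $q>4$. Hence
$$\|e^{-t\partial_x^3}(e^{i(\cdot)\xi_0}G)\|_{L^q_{t,x}}^q=\int|F(y,s)|^q\,\frac{dy\,ds}{3|\xi_0|}\le C_{q,R}^q\,|\xi_0|^{-1}\,\|\widehat G\|_\infty^q,$$
giving the claimed $|\xi_0|^{-1/q}$ factor in the announced range $4\le q<6$.

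For the first estimate the new difficulty is the non-smoothness of the weight $|\xi|^{1/6}$ at $\xi=0$. I would first reduce to $R=1$ via the Airy scaling $G_0(y)=R^{-1/2}G(y/R)$, absorbing the $R$-dependence into $C_{q,R}$, and then derive a pointwise bound $|D^{1/6}e^{-t\partial_x^3}G(x)|\le C\|\widehat G\|_\infty B_1(x,t)$ with $B_1\in L^q_{t,x}$ for $q>9/2$. The phase $\phi(\xi)=x\xi+t\xi^3$ has critical points $\xi_c=\pm\sqrt{-x/(3t)}$ and I would split according to their location: on $|x|\gtrsim|t|+1$ one has $|\phi'|\gtrsim|x|$, so one integration by parts (using that $\partial_\xi|\xi|^{1/6}=\tfrac16\sgn(\xi)|\xi|^{-5/6}$ is locally integrable) gives $(1+|x|)^{-1}$-decay; where $\xi_c$ lies in the amplitude's support and is bounded away from $0$, stationary phase on the then-smooth amplitude yields $|x|^{-1/6}|t|^{-1/3}$; and where $\xi_c$ is close to $0$, a dyadic decomposition $|\xi|\sim 2^{-k}$ near the origin combined with van der Corput's lemma for $k=3$ (using $\phi'''=6t$) on each shell produces a weaker $|t|$-decay whose $L^q_{t,x}$ mass is finite precisely for $q>9/2$.

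The principal obstacle is this third region, where the critical point of the phase coincides with the singular point of the weight: classical stationary phase requires a smooth amplitude and classical van der Corput requires a bounded-variation amplitude, whereas $\widehat G$ is merely in $L^\infty$. The dyadic shell reduction is what saves us: on each shell the weight $|\xi|^{1/6}$ is essentially a constant of size $2^{-k/6}$, so the amplitude becomes effectively smooth with integrable derivative, van der Corput applies uniformly on each shell, and the resulting geometric series in $k$ sums; the balance between the shell size and the weight size is what produces the $9/2$ threshold in the conclusion.
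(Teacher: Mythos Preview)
Your approach has a genuine gap, and it is the same gap in both parts: the pointwise oscillatory--integral bounds you invoke (the Stein corollary, van der Corput, integration by parts) all require the amplitude to be smooth or at least of bounded variation, whereas here the amplitude contains $\widehat G$, which is only assumed to lie in $L^\infty(B(0,R))$. Concretely, for the second estimate, fix $\xi_0$ and $s_0$ large and take $\widehat G(\eta)=e^{-i(s_0\eta^2+(s_0/(3\xi_0))\eta^3)}\,1_{[-R,R]}(\eta)$; then $\|\widehat G\|_\infty=1$, yet at $(y,s)=(0,s_0)$ the integral $F(0,s_0)=\int_{-R}^R 1\,d\eta=2R$ does not obey the claimed bound $C_R\,B(0,s_0)\sim C_R\,s_0^{-1/2}$. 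Remark~\ref{re:airy-schr} is not a valid citation here: in that remark the amplitude involves $\widehat{\phi^j}$ with $\phi^j$ Schwartz, which is precisely what justifies the stationary--phase bound. You yourself flag this obstacle in the last paragraph, but your proposed fix---the dyadic decomposition in $|\xi|$---only tames the weight $|\xi|^{1/6}$; it does nothing to regularize $\widehat G$, so on each shell the amplitude is still merely $L^\infty$ and van der Corput still does not apply.

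The paper's proof sidesteps this entirely by a bilinear/Hausdorff--Young argument: write $\|\,\cdot\,\|_{L^q}^2=\|\,\cdot\,\overline{\,\cdot\,}\|_{L^{q/2}}$, perform the change of variables $(\xi,\eta)\mapsto(\xi-\eta,\ (\xi+\xi_0)^3-(\eta+\xi_0)^3)$ (or its analogue for \eqref{eq:restri-esti-1}), apply Hausdorff--Young in $(t,x)$, and then bound the resulting Fourier--side integral using only the pointwise inequality $|\widehat G(\xi)|\le\|\widehat G\|_\infty$ together with the compact support. No regularity of $\widehat G$ beyond boundedness is needed; the $|\xi_0|^{-1/q}$ factor in \eqref{eq:restri-esti-2} drops out of the Jacobian $|\xi+\eta+2\xi_0|\sim|\xi_0|$, and the threshold $q>9/2$ in \eqref{eq:restri-esti-1} arises from local integrability of $|\xi_1\pm\xi_2|^{2-5r'/3}$.
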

\begin{proof}Let us start with the proof of \eqref{eq:restri-esti-1}.
Let $q=2r$ with $9/4<r<3$. After squaring, we are reduced to proving
\begin{align*}&\left\|\int_{B(0,R)}\int_{B(0,R)} e^{ix(\xi_1-\xi_2)+it(\xi_1^3-\xi_2^3)}|\xi_1\xi_2|^{1/6}
\widehat{G}(\xi_1)\overline{\widehat{G}}(\xi_2)d\xi_1d\xi_2\right\|_{L^r_{t,x}}\\
&\le C_{q,R}\|\widehat{G}\|^2_{L^{\infty}(B(0,R))}.
 \end{align*}Let $s_1:=\xi_1-\xi_2$ and $s_2:=\xi_1^3-\xi_2^3$ and denote the
resulting image of $B(0,R)\times B(0,R)$ by $\Omega$ under this change of variables. Then by
using the Hausdorff-Young inequality since $r>2$, we see that the left-hand side of the inequality above is bounded by
$$C\left(\int_{\Omega}\left||\xi_1\xi_2|^{1/6}\frac{\widehat{G}(\xi_1)\widehat{G}(\xi_2)}{|\xi_1^2
-\xi_2^2|}\right|^{r'}ds_1ds_2\right)^{\frac 1{r'}}.$$
Then if we change variables back, we obtain
$$C\left(\int_{B(0,R)\times B(0,R)}\dfrac {|\xi_1\xi_2|^{r'/6}}{|\xi_1+\xi_2|^{r'-1}|\xi_1-\xi_2|^{r'-1}}
|\widehat{G}(\xi_1)\widehat{G}(\xi_2)|^{r'}d\xi_1d\xi_2\right)^{\frac 1{r'}}.$$
As in the proof of Lemma \ref{le:KPV-Strichartz}, we may assume that $\xi_1,\xi_2\ge 0$. So we have $|\xi_1\xi_2|^{\frac 12} \lesssim |\xi_1+\xi_2|$, which leads to $(\xi_1\xi_2)^{r'/6}\lesssim (\xi_1+\xi_2)^{r'/3}$ and thus
\begin{equation*}
\dfrac {|\xi_1\xi_2|^{r'/6}}{|\xi_1+\xi_2|^{r'-1}|\xi_1-\xi_2|^{r'-1}}\lesssim \dfrac
{1}{|\xi_1-\xi_2|^{\frac 53 r'-2}}+\dfrac{1}{|\xi_1+\xi_2|^{\frac 53 r'-2}}.
\end{equation*}
Then since $|\xi|^{-\frac 53r'+2}$ is locally integrable when $3/2<r'<9/5$ and
$\widehat{G}\in L^{\infty}$, we obtain \eqref{eq:restri-esti-1}.

The proof of \eqref{eq:restri-esti-2} is similar. Setting $q=2r$ with $2\le r<3$ and following the same procedure as above, we have
\begin{align*}
\|e^{-t\partial_x^3}(e^{i(\cdot)\xi_0}G)\|^2_{L^q_{t,x}}
&=\|e^{-t\partial_x^3}(e^{i(\cdot)\xi_0}G)\overline {e^{-t\partial_x^3}(e^{i(\cdot)\xi_0}G)}\|_{L^{r}_{t,x}}\\
&=\|\int e^{ix(\xi-\eta)+it[(\xi+\xi_0)^3-(\eta+\xi_0)^3]}\widehat{G}(\xi)
\overline{\widehat{G}}(\eta)d\xi d\eta\|_{L^r_{t,x}}\\
&\lesssim \left(\int\frac {|\widehat{G}(\xi)|^{r'}|\widehat{G}(\eta)|^{r'}}{|\xi-\eta|^{r'-1}|\xi+\eta+2\xi_0|^{r'-1}}d\xi d\eta
\right)^{1/{r'}}\\
&\lesssim  \left(\int\frac
{|\widehat{G}(\xi)|^{r'}|\widehat{G}(\eta)|^{r'}}{|\xi-\eta|^{r'-1}|\xi_0|^{r'-1}}d\xi d\eta\right)^{1/{r'}}\\
&\le C_{q,R}
|\xi_0|^{-1+1/{r'}}\|\widehat{G}\|^2_{L^{\infty}}\le C_{q,R}|\xi_0|^{-2/q}\|\widehat{G}\|^2_{L^{\infty}},
\end{align*}
where we have used $|\xi+\eta+2\xi_0|\sim |\xi_0|$ since $\xi,\eta\in B(0,R)$ and $|\xi_0|\ge 10 R$.
\end{proof}
In Lemma \ref{le:scale-core-complex}, we have determined the scaling and frequency parameters. Recall that from the introduction, we are left with extracting the space and time translation parameters. For this purpose, we will apply the  concentration-compactness argument. For simplicity, we present the following lemma of this kind adapted to Airy evolution but not involving the frequency and scaling parameters. The general case is similar and will be done in the next lemma.
\begin{lemma}[Concentration-Compactness]\label{le:concentr-compact}Suppose $P:=(P_n)_{n\ge 1}$ with $\|P_n\|_{L^2}\le 1$. Then up to a subsequence, there exists a sequence $(\phi^{\alpha})_{\alpha\ge 1}\in L^2$ and a family $(y_n^{\alpha},s_n^{\alpha})\in \R^2$ such that they satisfy the following constraints,
for $\alpha\neq \beta$,
\begin{equation}\label{eq:c-c-0}\lim_{n\to\infty}
\left(|y_n^{\alpha}-y_n^{\beta}|+|s_n^{\alpha}-s_n^{\beta}|\right)=\infty,\end{equation}
and for $A\ge 1$, there exists $P^A_n\in L^2$ so that
\begin{equation}\label{eq:c-c-1}
P_n(x)=\sum_{\alpha=1}^{A} e^{s^{\alpha}_n\partial^3_x}\phi^{\alpha}(x-y_n^{\alpha})+P^A_n(x),
\end{equation}
and \begin{equation*}\lim_{A\to \infty}\mu(P^A)=0,\end{equation*} where $\mu(P^A)$ is defined in the argument below; moreover we have the following almost orthogonality identity: for any $A\ge 1$,
\begin{equation}\label{eq:c-c-2}
\lim_{n\to\infty}\left(\|P_n\|^2_{L^2}-\left(\sum_{\alpha=1}^{A}\|\phi^{\alpha}\|^2_{L^2}
+\|P^A_n\|^2_{L^2}\right)\right)=0.
\end{equation}
\end{lemma}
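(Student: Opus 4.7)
The plan is to apply the Bahouri–Gérard/Keraani iterative extraction scheme, adapted to the translation group on $\R$ and the Airy semigroup $e^{-s\partial_x^3}$ on $L^2$. First define
\[
\mathcal{W}(P) := \Bigl\{w\text{-}\lim_{n\to\infty} e^{-s_n\partial_x^3} P_n(\cdot + y_n) \text{ in } L^2 : (y_n, s_n) \in \R^2\Bigr\}, \qquad \mu(P) := \sup\bigl\{\|\phi\|_{L^2}:\phi \in \mathcal{W}(P)\bigr\};
\]
by unitarity of $e^{-s\partial_x^3}$ and weak lower semicontinuity, $\mu(P) \le \limsup_n \|P_n\|_{L^2} \le 1$. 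Setting $P^0_n := P_n$, I proceed recursively: at stage $A$, if $\mu(P^{A-1}) = 0$ I terminate; otherwise I choose $\phi^A \in \mathcal{W}(P^{A-1})$ with $\|\phi^A\|_{L^2} \ge \mu(P^{A-1})/2$, select parameters $(y_n^A, s_n^A)$ along which the weak limit is realized, pass to the corresponding subsequence, and define
\[
P^A_n(x) := P^{A-1}_n(x) - e^{s_n^A \partial_x^3} \phi^A(x - y_n^A).
\]
A diagonal extraction at the end produces a single subsequence valid simultaneously for every $A$.

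Expanding $\|P^A_n\|_{L^2}^2$, using unitarity of $e^{-s_n^A\partial_x^3}$ and the defining weak convergence $e^{-s_n^A\partial_x^3}P^{A-1}_n(\cdot + y_n^A) \rightharpoonup \phi^A$, yields the one-step Pythagorean identity $\|P^A_n\|_{L^2}^2 = \|P^{A-1}_n\|_{L^2}^2 - \|\phi^A\|_{L^2}^2 + o(1)$, which iterates to \eqref{eq:c-c-2}. Letting $n\to\infty$ first gives $\sum_{\alpha\ge 1}\|\phi^\alpha\|_{L^2}^2 \le 1$, so $\|\phi^A\|_{L^2} \to 0$, and since $\mu(P^A) \le 2\|\phi^{A+1}\|_{L^2}$ by construction, also $\lim_{A\to\infty}\mu(P^A) = 0$.

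The main obstacle is the orthogonality condition \eqref{eq:c-c-0}. I argue by strong induction on $\beta$: assume $(y_n^\gamma, s_n^\gamma)_{\gamma < \beta}$ are pairwise orthogonal and show that $(y_n^\beta, s_n^\beta)$ is orthogonal to every earlier $(y_n^\gamma, s_n^\gamma)$. The triangle inequality forbids two non-orthogonal predecessors, so at most one index $\gamma_0 < \beta$ can be problematic; after passing to a subsequence, assume $(y_n^\beta - y_n^{\gamma_0}, s_n^\beta - s_n^{\gamma_0}) \to (y_0, s_0) \in \R^2$. Decomposing
\[
e^{-s_n^\beta\partial_x^3} P^{\beta-1}_n(\cdot + y_n^\beta) = e^{-s_n^\beta\partial_x^3} P^{\gamma_0}_n(\cdot + y_n^\beta) - \sum_{\gamma_0 < \gamma < \beta} e^{(s_n^\gamma - s_n^\beta)\partial_x^3}\phi^\gamma(\cdot + y_n^\beta - y_n^\gamma),
\]
each term in the sum tends weakly to zero because the shift $(y_n^\beta - y_n^\gamma, s_n^\beta - s_n^\gamma)$ diverges (by the inductive orthogonality and the triangle inequality), and a Riemann–Lebesgue/stationary-phase computation on the Fourier side shows that $e^{t_n\partial_x^3}\phi(\cdot + z_n) \rightharpoonup 0$ in $L^2$ whenever $|t_n| + |z_n| \to \infty$ and $\phi \in L^2$ is fixed. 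For the first term on the right, write $P^{\gamma_0}_n = P^{\gamma_0-1}_n - e^{s_n^{\gamma_0}\partial_x^3}\phi^{\gamma_0}(\cdot - y_n^{\gamma_0})$; the two resulting pieces both converge weakly to $e^{-s_0\partial_x^3}\phi^{\gamma_0}(\cdot + y_0)$—the first by strong operator continuity of $e^{t\partial_x^3}$ in $t$ combined with bounded translation applied to the weakly convergent sequence defining $\phi^{\gamma_0}$, and the second by outright strong $L^2$ convergence—so their difference has weak limit zero. Consequently $\phi^\beta = 0$, contradicting $\|\phi^\beta\|_{L^2} > 0$ and closing the induction.
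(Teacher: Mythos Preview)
Your proposal is correct and follows essentially the same approach as the paper: define $\mathcal{W}(P)$ and $\mu(P)$, extract profiles greedily with $\|\phi^\alpha\|_{L^2}\ge\tfrac12\mu(P^{\alpha-1})$, obtain the Pythagorean identity by unitarity plus weak convergence, and deduce $\mu(P^A)\to 0$ from the summability of $\|\phi^\alpha\|_{L^2}^2$. The only noticeable difference is in the orthogonality step \eqref{eq:c-c-0}: the paper argues it only for consecutive profiles (using directly that $e^{-s_n^1\partial_x^3}P_n^1(\cdot+y_n^1)\rightharpoonup 0$) and then writes ``iterating this argument,'' whereas you spell out the full strong induction and split $P_n^{\gamma_0}=P_n^{\gamma_0-1}-e^{s_n^{\gamma_0}\partial_x^3}\phi^{\gamma_0}(\cdot-y_n^{\gamma_0})$ into two pieces with the same weak limit. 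Your version is slightly more elaborate but equivalent; note that you could shorten it by invoking $e^{-s_n^{\gamma_0}\partial_x^3}P_n^{\gamma_0}(\cdot+y_n^{\gamma_0})\rightharpoonup 0$ directly, as the paper does, rather than decomposing one step further.
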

\begin{proof}
Let $\mathcal{W}(P)$ be the set of weak limits of subsequences of $P$ in $L^2$ after the space and time translations:
$$\mathcal{W}(P):=\{w-\lim_{n\to \infty}e^{-s_n\partial^3_x}P_n(x+y_n) \text{ in } L^2:
(y_n, s_n)\in \R^2)\}.$$
We set $\mu(P):=\sup \{\|\phi\|_{L^2}: \phi\in \mathcal{W}(P)\}$. Clearly we have
$$\mu(P)\le \lim_{n\to \infty} \|P_n\|_{L^2}.$$
If $\mu(P)=0$, then there is nothing to prove. Otherwise $\mu(P)>0$, then up to a subsequence, there exists a $\phi^1\in L^2$ and a sequence $(y^1_n,s^1_n)_{n\ge 1}\in \R^2$ such that
\begin{equation}\label{eq:cc-1}
\phi^1(x)=w-\lim_{n\to \infty}e^{-s^1_{n}\partial^3_x}P_n(x+y^1_n) \text{ in } L^2,
\end{equation}
and $\|\phi^1\|_{L^2}\ge \frac12\mu(P)$. We set $P_n^1:=P_n-e^{s^1_{n}\partial^3_x}\phi^1(x-y^1_n).$
Then since $e^{-t\partial_x^3}$ is an unitary operator on $L^2$, we have
\begin{align*}
\|P_n^1\|^2_{L^2}&=\langle P_n^1, P^1_n\rangle_{L^2}\\
&=\langle P_n-e^{s^1_{n}\partial^3_x}\phi^1(x-y^1_n),P_n-e^{s^1_{n}\partial^3_x}\phi^1(x-y^1_n)\rangle_{L^2}\\
&=\langle e^{-s^1_{n}\partial^3_x}\left(P_n-e^{s^1_{n}\partial^3_x}\phi^1(x-y^1_n)\right),
e^{-s^1_{n}\partial^3_x}\left(P_n-e^{s^1_{n}\partial^3_x}\phi^1(x-y^1_n)\right)\rangle_{L^2}\\
&=\langle e^{-s^1_{n}\partial^3_x}P_n-\phi^1(x-y^1_n),
e^{-s^1_{n}\partial^3_x}P_n-\phi^1(x-y^1_n)\rangle_{L^2}\\
&=\langle e^{-s^1_{n}\partial^3_x}P_n(x+y_n^1)-\phi^1(x),
e^{-s^1_{n}\partial^3_x}P_n(x+y_n^1)-\phi^1(x)\rangle_{L^2}\\
&=\langle P_n, P_n\rangle_{L^2}
+\langle\phi^1,\phi^1\rangle_{L^2}-\langle e^{-s^1_{n}\partial^3_x}P_n(x+y_n^1) ,\phi^1\rangle_{L^2}
-\langle \phi^1, e^{-s^1_{n}\partial^3_x}P_n(x+y_n^1)\rangle_{L^2}.
\end{align*}
Taking $n\to \infty$ and using \eqref{eq:cc-1}, we see that
\begin{align*}
\lim_{n\to \infty} \left(\|P_n\|^2_{L^2}-(\|\phi^1\|^2_{L^2}+\|P_n^1\|^2_{L^2})\right)=0,\\
e^{-s^1_{n}\partial^3_x}P^1_n(x+y^1_n)\to 0, \text{ weakly in } L^2.
\end{align*}
We replace $P_n$ with $P_n^1$ and repeat the same process: if $\mu(P^1)>0$, we obtain $\phi^2$ and
$(y_n^2, s_n^2)_{n\ge 1}$ so that $\|\phi^2\|_{L^2}\ge \frac 12\mu(P^1)$ and
\begin{equation*}
\phi^2(x)=w-\lim_{n\to \infty}e^{-s^2_n\partial^3_x}P^1_n(x+y^2_n) \text{ in } L^2.
\end{equation*}
Moreover, $(y^1_n,s^1_n)_{n\ge 1}$ and $(y_n^2, s_n^2)_{n\ge 1}$ satisfy \eqref{eq:c-c-0}. Otherwise, up to a subsequence, we may assume that
$$\lim_{n\to\infty} s_n^2-s_n^1=s_0, \lim_{n\to\infty} y_n^2-y_n^1=y_0,$$
where $(s_0,y_0)\in \R^2$. Then for any $\phi\in \S$,
$$\lim_{n\to \infty}\|e^{-(s_n^2-s_n^1)\partial_x^3}\phi(x+(y_n^2-y_n^1))-
e^{-s_0\partial_x^3}\phi(x+y_0)\|_{L^2}=0.$$
That is to say, $\left(e^{-(s_n^2-s_n^1)\partial_x^3}\phi(x+(y_n^2-y_n^1))\right)_{n\ge 1}$ converges strongly in $L^2$. On the other hand, we rewrite,
$$e^{-s^2_n\partial^3_x}P^1_n(x+y^2_n)=e^{-(s_n^2-s_n^1)\partial_x^3}
\left(e^{-s^1_{n}\partial^3_x}P^1_n(x+y^1_n)\right)(x+(y_n^2-y_n^1)).$$ Now the strong convergence and weak convergence together yield $\phi^2=0$, hence $\mu(P^1)=0$, a contradiction. Hence \eqref{eq:c-c-0} holds.

Iterating this argument, a diagonal process produces a family of pairwise orthogonal sequences
$(y_n^{\alpha}, s_n^{\alpha})_{\alpha\ge 1}$ and $(\phi^{\alpha})_{\alpha\ge 1}$ satisfying
\eqref{eq:c-c-1} and \eqref{eq:c-c-2}. From \eqref{eq:c-c-2}, $\sum_{\alpha}\|\phi^{\alpha}\|^2_{L^2}$ is convergent and hence $\lim_{\alpha\to \infty}\|\phi^{\alpha}\|_{L^2}=0$. This gives $$\lim_{A\to \infty}\mu(P^A)=0,$$ since $\mu(P^A)\le 2\|\phi^A\|_{L^2}$ by construction.
\end{proof}
We are ready to extract the space and time parameters of the profiles.
\begin{lemma}[Complex version:~extraction of $x_n^{j,\alpha}$ and $s_n^{j,\alpha}$]\label{le:spa-time}
Suppose an $L^2$-bounded sequence $(f_n)_{n\ge 1}$ satisfies \begin{equation*}
\sqrt{\rho_n}|\widehat{f_n}(\rho_n(\xi+(\rho_n)^{-1}\xi_n))|\le F(\xi)
\end{equation*} with $F\in L^{\infty}(K)$ for some compact set $K$ in $\R$ independent of $n$. Then up to a subsequence, there exists a family $(y_n^{\alpha}, s_n^{\alpha})\in \R\times \R$ and a sequence $(\phi^{\alpha})_{\alpha\ge 1}$ of $L^2$ functions such that, if $\alpha\neq \beta$,
\begin{equation}\label{eq:ortho-2}
\lim_{n\to\infty}\left(\left|y_n^{\beta}-y_n^{\alpha}+\dfrac {3(s_n^{\beta}-s_n^{\alpha})(\xi_n)^2}{(\rho_n)^2}\right|+\left|\dfrac
{3(s_n^{\beta}-s_n^{\alpha})\xi_n}{\rho_n}\right|+\left|s_n^{\beta}-s^{\alpha}_n\right|\right)=\infty,
\end{equation}
and for every $A\ge 1$, there exists $e_n^A\in L^2$,
\begin{equation}\label{eq:prof-2}
f_n(x)=\sum_{\alpha=1}^{A}\sqrt{\rho_n}e^{s^{\alpha}_n\partial^3_x}[e^{i(\cdot)\rho_n^{-1}\xi_n} \phi^{\alpha}(\cdot)](\rho_nx-y_n^{\alpha})+e_n^A(x),
\end{equation}
and \begin{equation}\label{eq:err-2}
\lim_{A\to \infty}\lim_{n\to \infty}\|D^{\frac 16}e^{-t\partial_x^3}e_n^A\|_{L^6_{t,x}}=0, \end{equation}
and for any $A\ge 1$,
\begin{equation}\label{eq:almost-ortho-2}
\lim_{n\to\infty}\left(\|f_n\|^2_{L^2}-\left(\sum_{\alpha=1} ^{A}\|\phi^{\alpha}\|^2_{L^2}+\|e^A_n
\|^2_{L^2}\right)\right)=0.
\end{equation}
\end{lemma}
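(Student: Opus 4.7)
First I would rescale to normalize the Fourier support: define $P_n$ via $\widehat{P_n}(\xi) := \sqrt{\rho_n}\,\widehat{f_n}(\rho_n\xi+\xi_n)$, so that $f_n(x) = \sqrt{\rho_n}\,e^{ix\xi_n}\,P_n(\rho_n x)$. Under this rescaling $(P_n)_{n\ge 1}$ is $L^2$-bounded and $|\widehat{P_n}|\le F\in L^{\infty}(K)$ uniformly in $n$, for some fixed compact $K\subset\R$. The task thus reduces to extracting space--time parameters $(y_n^\alpha,s_n^\alpha)$ from $P_n$ and transferring the resulting decomposition back through this rescaling.

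\textbf{Iterated weak-limit extraction.} The plan is to adapt Lemma \ref{le:concentr-compact} to the present setting, replacing the standard set of weak limits by
\begin{equation*}
\mathcal{W}(P) := \bigl\{w\text{-}\lim_{n\to\infty}\,e^{-ix\rho_n^{-1}\xi_n}\,e^{-s_n\partial_x^3}[e^{i(\cdot)\rho_n^{-1}\xi_n}P_n(\cdot)](x+y_n) \text{ in }L^2 : (y_n,s_n)\in\R^2\bigr\},
\end{equation*}
and $\mu(P) := \sup\{\|\phi\|_{L^2}:\phi\in\mathcal{W}(P)\}$. A diagonal iteration selecting at each stage a profile $\phi^\alpha$ with $\|\phi^\alpha\|_{L^2}\ge\tfrac12\mu(P^{\alpha-1})$ produces
\begin{equation*}
P_n(x) = \sum_{\alpha=1}^{A} e^{-ix\rho_n^{-1}\xi_n}\,e^{s_n^\alpha\partial_x^3}[e^{i(\cdot)\rho_n^{-1}\xi_n}\phi^\alpha](x-y_n^\alpha) + P_n^A(x),
\end{equation*}
with $\lim_{A\to\infty}\mu(P^A)=0$ and the Pythagorean $L^2$-identity for $(\phi^\alpha,P_n^A)$. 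The three-term orthogonality \eqref{eq:ortho-2} arises from the explicit computation
\begin{equation*}
e^{-s\partial_x^3}[e^{i(\cdot)a}\phi](x) = e^{ixa+isa^3}\int e^{i(x+3sa^2)\eta+is\eta^3+3isa\eta^2}\,\widehat{\phi}(\eta)\,d\eta,
\end{equation*}
with $a := \rho_n^{-1}\xi_n$: composing the operators at indices $\alpha$ and $\beta$ effectively translates position by $y_n^\beta-y_n^\alpha+3(s_n^\beta-s_n^\alpha)a^2$, modulates by $3(s_n^\beta-s_n^\alpha)a$, and shifts time by $s_n^\beta-s_n^\alpha$. If none of these three quantities tended to $\infty$, strong convergence of shifted Schwartz functions against weak-null sequences (exactly as in the proof of Lemma \ref{le:concentr-compact}) would force $\phi^\beta=0$, a contradiction. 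Unscaling via $f_n(x)=\sqrt{\rho_n}e^{ix\xi_n}P_n(\rho_n x)$ then yields \eqref{eq:prof-2} with $e_n^A(x) := \sqrt{\rho_n}\,e^{ix\xi_n}P_n^A(\rho_n x)$, and \eqref{eq:almost-ortho-2} follows from the $L^2$ identity for $P_n$ since modulation and scaling are $L^2$-isometries.

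\textbf{Strichartz smallness of the error.} The main obstacle is \eqref{eq:err-2}. I would interpolate
\begin{equation*}
\|D^{1/6}e^{-t\partial_x^3}e_n^A\|_{L^6_{t,x}}^{6} \lesssim \|D^{1/6}e^{-t\partial_x^3}e_n^A\|_{L^q_{t,x}}^{q}\,\|D^{1/6}e^{-t\partial_x^3}e_n^A\|_{L^{\infty}_{t,x}}^{6-q}
\end{equation*}
for some $4\le q<6$ to be chosen according to the regime of $\rho_n^{-1}\xi_n$. The $L^q$ factor is uniformly bounded in $n$ by Lemma \ref{le:restriction}: after undoing the rescaling, $\widehat{e_n^A}$ has support in the translated compact set $\rho_n K+\xi_n$, so when $\rho_n^{-1}\xi_n$ stays bounded I use \eqref{eq:restri-esti-1} with $9/2<q<6$, while when $|\rho_n^{-1}\xi_n|\to\infty$ I factor out the dominant $|\xi_n|^{1/6}$ and apply \eqref{eq:restri-esti-2} with $4\le q<6$. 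For the $L^{\infty}$ factor I would argue by contradiction: if $\limsup_n\|D^{1/6}e^{-t\partial_x^3}e_n^A\|_{L^{\infty}_{t,x}}\ge\delta>0$ for all large $A$, pick $(t_n,x_n)$ nearly realizing this lower bound, express the pointwise value as an $L^2$ pairing of $P_n^A$ (after rescaling) against a test function drawn from a precompact family of translated and Airy-evolved kernels, and extract a subsequential weak limit to produce a nonzero element of $\mathcal{W}(P^A)$, contradicting $\lim_{A\to\infty}\mu(P^A)=0$. Reconciling the multiplier $|\xi|^{1/6}$ and the twist $e^{i(\cdot)\rho_n^{-1}\xi_n}$ with the modified $\mathcal{W}$ is the source of the bookkeeping here, and is precisely why the two regimes of $\rho_n^{-1}\xi_n$ must be treated separately.
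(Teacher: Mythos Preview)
Your proposal is correct and follows essentially the same route as the paper: the rescaling to $P_n$, the twisted weak-limit set $\mathcal{W}(P)$, the concentration-compactness iteration giving \eqref{eq:ortho-2} and \eqref{eq:almost-ortho-2}, and the split into the two regimes of $a_n=\rho_n^{-1}\xi_n$ with interpolation against Lemma~\ref{le:restriction} are exactly what the paper does. The only cosmetic differences are that the paper handles the $L^\infty$ factor via an explicit space--time convolution with a cutoff $\chi_n$ whose Fourier transform equals $1$ on the relevant piece of the cubic curve (a reproducing formula), then bounds the resulting pairing directly by $\mu(P^A)$, whereas you phrase the same step as a contradiction; and in the bounded-$a_n$ case the paper inserts a Bernstein step to strip $D^{1/6}$ before the $L^\infty$ argument, which you should make explicit.
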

\begin{proof}
Setting $P:=(P_n)_{n\ge 1}$ with
$\widehat{P_n}(\xi):=\sqrt{\rho_n}\widehat{f_n}(\rho_n(\xi+(\rho_n)^{-1}\xi_n))$. Then $$\widehat{P_n}\in L^{\infty}(K).$$ Let $\mathcal{W}(P)$ be the set of weak limits in $L^2$ defined via
$$\mathcal{W}(P):=\{w-\lim_{n\to \infty}e^{-ix\rho_n^{-1}\xi_n}e^{-s_n\partial^3_x}
[e^{i(\cdot)\rho_n^{-1}\xi_n}P_{n}(\cdot)](x+y_n)\text{ in }L^2: (y_n, s_n)\in \R^2\},$$ and $\mu(P)$ as in the previous lemma. Then a similar concentration-compactness argument shows that, up to a subsequence, there exists a family $(y_n^\alpha, s_n^\alpha)_{\alpha \ge 1\atop n\ge 1}$ and $(\phi^\alpha)_{\alpha\ge 1}\in L^2$ such that \eqref{eq:ortho-2} holds, and
\begin{equation*}
P_n(x)=\sum_{\alpha=1}^{A} e^{-ix\rho_n^{-1}\xi_n}
e^{s^{\alpha}_n\partial^3_x}[e^{i(\cdot)\rho_n^{-1}\xi_n}\phi^{\alpha}(\cdot)](x-y_n^{\alpha})+P^A_n(x).
\end{equation*} As weak limits, each $\widehat{\phi^\alpha}$ has the same support as $\widehat P_n$, so does $\widehat{P_n^A}$. Furthermore, we may assume that $\widehat{\phi^\alpha}, \widehat{P_n^A}\in L^\infty(K)$.
Setting $P^A:=(P^A_n)_{n\ge 1}$. Then the sequence $(P^A)_{A\ge 1}$ satisfies \begin{equation}\label{eq:spa-time-0}\lim_{A\to \infty} \mu(P^A)=0.
 \end{equation}
For any $A\ge 1$, we also have
\begin{equation*}
\lim_{n\to\infty}\left(\|P_n\|^2_{L^2}-\left(\sum_{\alpha=1}^{A}\|\phi^{\alpha}\|^2_{L^2}+
\|P^A_n\|^2_{L^2}\right)\right)=0.
\end{equation*}
Since $f_n(x)=\sqrt{\rho_n}e^{ix\xi_n}P_n(\rho_nx)$, the decomposition \eqref{eq:prof-2}
of $f_n$ follows after setting $e_n^A(x):=\sqrt{\rho_n}e^{ix\xi_n}P^A_n(\rho_nx)$.

What remains to show is that
$$\lim_{A\to \infty}\lim_{n\to \infty}\|D^{\frac 16}e^{-t\partial_x^3}[\sqrt{\rho_n}e^{iy\xi_n}
P^A_n(\rho_ny)]\|_{L^6_{t,x}}=0,$$
which will follow from \eqref{eq:spa-time-0} and the restriction estimates in Lemma \ref{le:restriction} by an interpolation argument. Indeed, by scaling, it is equivalent to showing that
\begin{equation}\label{eq:spa-time-1}
\lim_{A\to \infty}\lim_{n\to \infty}\|D^{1/6}e^{-t\partial_x^3}[e^{i(\cdot)a_n}P_n^A]\|_{L^6_{t,x} }=0,
\end{equation}  where $a_n:=(\rho_n)^{-1}\xi_n$. Up to a subsequence, we split into two cases according to whether
$\lim_{n\to \infty} |a_n|=\infty$ or not.

$\mathbf{\emph{Case 1.}}$ $\lim_{n\to \infty}|a_n|=\infty$. By using the H\"ormander-Mikhlin multiplier theorem \cite[Theorem 4.4]{Tao:2007-247A-fourier-analysis}, for sufficiently large $n$, we have
\begin{equation*}
\|D^{1/6}e^{-t\partial_x^3}[e^{i(\cdot)a_n}P_n^A]\|_{L^6_{t,x}} \lesssim |a_n|^{1/6}\|e^{-t\partial_x^3}[e^{i(\cdot)a_n}P_n^A]\|_{L^6_{t,x}}.
\end{equation*} We will show that, after taking limits in $n$, the right hand side is bounded by $C_q\mu(P^A)^{1-q/6}$ for some $4\le q<6$. Then $\lim_{A\to \infty}\mu(P^A)=0$ yields the result. We choose a cut-off $\chi_n(t,x):=\chi_{n,1}(t)\chi_{n,2}(x)$ satisfying
\begin{equation*}\chi_{n,2}(x):=\chi_2(x)e^{ixa_n},\chi_2\in \S,
\end{equation*}
where $\widehat{\chi}_2$ is compactly supported and $\widehat{\chi}_2(\xi):=1$ on the common support $K$ of $\widehat{P_n}$, and
\begin{equation*}
\widehat{\chi}_{n,1}((\xi+a_n)^3):=\widehat{\chi}_1 (\xi^3), \chi_1\in \S,
\end{equation*} where $\widehat{\chi}_1(\xi^3):=1$ on $\supp\widehat{\chi}_2$.
Let $*$ denote the space-time convolution, then
\begin{equation}\label{eq:loc-2}
\chi_n*[e^{-t\partial_x^3}(e^{i(\cdot)a_n}P_n^A)]=e^{-t\partial_x^3}(e^{i(\cdot)a_n}P_n^A).
\end{equation}
Indeed, the space-time Fourier transform of $\chi_n$ is equal to
$$\widehat{\chi}_n(\tau,\xi):=\int e^{-it\tau-ix\xi} \chi_n(t,x)dtdx=\widehat{\chi}_2(\xi-a_n)\widehat{\chi}_{n,1}(\tau).$$
On the support of the space-time Fourier transform of $e^{-t\partial_x^3}(e^{i(\cdot)a_n}P_n^A),$ we see that $$\widehat{\chi}_n(\tau,\xi)\equiv 1.$$ This gives \eqref{eq:loc-2}. Then by the H\"older inequality and the restriction estimate \eqref{eq:restri-esti-2} in Lemma \ref{le:restriction}, for sufficiently large $n$, \begin{align*}&\|e^{-t\partial_x^3}(e^{i(\cdot)a_n}P_n^A)\|_{L^6_{t,x}}
=\|\chi_n*[e^{-t\partial_x^3}(e^{i(\cdot)a_n}P_n^A)]\|_{L^6_{t,x}} \\
&\lesssim\|\chi_n*[e^{-t\partial_x^3}(e^{i(\cdot)a_n}P_n^A)]\|^{q/6}_{L^q_{t,x}}
\|\chi_n*[e^{-t\partial_x^3}(e^{i(\cdot)a_n}P_n^A)]\|^{1-q/6}_{L^{\infty}_{t,x}}\\
&\lesssim |a_n|^{-1/6}\|F\|^{q/6}_{L^{\infty}}
 \|\chi_n*[e^{-t\partial_x^3}(e^{i(\cdot)a_n}P_n^A)]\|^{1-q/6}_{L^{\infty}_{t,x}},
\end{align*}
for some $4<q<6$. There exists $(t_n,y_n)_{n\ge 1}$ such that
$$\|\chi_n*[e^{-t\partial_x^3}(e^{i(\cdot)a_n}P_n^A)]\|_{L^{\infty}_{t,x}}\sim
\left|\chi_n*[e^{-t\partial_x^3}(e^{i(\cdot)a_n}P_n^A)](t_n,y_n)\right|.$$
We expand the right hand side out,
\begin{align*}
&\left|\int\int \chi_{n,1}(-t)\chi_{n,2}(-x)
e^{-t\partial_x^3}[e^{-t_n\partial_x^3}(e^{i(\cdot)a_n}P_n^A)(\cdot+y_n)](x)dxdt\right|.
\end{align*}
Setting $p_n(x)=e^{-t_n\partial_x^3}(e^{i(\cdot)a_n}P_n^A)(x+y_n)$, then it equals
$$\left|\int\int\widehat{\chi_1}(\eta^3)\widehat{\chi_2}(\eta)e^{-ix\eta}d\eta\,e^{-ixa_n}p_n(x)\,dx\right|
=\left|\int\chi_2(-x)\,e^{-ixa_n}p_n(x)\,dx\right|.$$
Taking $n\to \infty$, and using the definition of $\mathcal{W}(P^A)$ followed by the Cauchy-Schwarz inequality, we obtain,
\begin{equation*}
 \lim_{n\to\infty} \|\chi_n*[e^{-t\partial_x^3}(e^{i(\cdot)a_n}P_n^A)]\|_{L^{\infty}_{t,x}}
 \lesssim \|\chi_2\|_{L^2}\mu(P^A)\lesssim_{\chi_2} \mu(P^A).
\end{equation*}
Hence the claim \eqref{eq:spa-time-1} follows.

$\mathbf{\emph{Case 2.}}$ $\lim_{n\to\infty} |a_n|<\infty$. From the H\"older inequality, we have the $L^6_{t,x}$ norm in \eqref{eq:spa-time-1} is bounded by
\begin{equation*}
\|D^{1/6}e^{-t\partial_x^3}[e^{i(\cdot)a_n}P_n^A]\|^{q/6}_{L^q_{t,x}}
\|D^{1/6}e^{-t\partial_x^3}[e^{i(\cdot)a_n}P_n^A]\|^{1-q/6}_{L^\infty_{t,x}}
\end{equation*}
for some $4<q<6$. On the one hand, since $\lim_{n\to \infty}|a_n|$ is finite and $\widehat{P_n^A}\in L^{\infty}(K)$, there exists a large $R>0$ so that
$$\supp{\mathcal{F}[e^{i(\cdot)a_n}P_n^A]}\subset B(0,R),$$
where $\mathcal{F}(f)$ denotes the spatial Fourier transform of $f$. Then from \eqref{eq:restri-esti-1} in Lemma \ref{le:restriction}, we see
$$\|D^{1/6}e^{-t\partial_x^3}[e^{i(\cdot)a_n}P_n^A]\|_{L^q_{t,x}}\le C_{q,R}\|F\|_{L^\infty},$$
which is independent of $n$. On the other hand, from the Bernstein inequality, we have
$$\|D^{1/6}e^{-t\partial_x^3}[e^{i(\cdot)a_n}P_n^A]\|_{L^\infty_{t,x}}\le C_{q,R} \|e^{-t\partial_x^3}[e^{i(\cdot)a_n}P_n^A]\|_{L^\infty_{t,x}}.$$
Then a similar argument as in \emph{Case 1} shows that $\|e^{-t\partial_x^3}[e^{i(\cdot)a_n}P_n^A]\|_{L^\infty_{t,x}}$ is bounded by $\mu(P^A)^c$ for some $c>0$. Hence \eqref{eq:spa-time-1} follows and the proof of Lemma \ref{le:spa-time} is complete.
\end{proof}
\begin{remark}\label{re:reduction}
In view of the previous lemma, we will make a very useful reduction when $\lim_{n\to \infty}\rho_n^{-1}\xi_n=a$ is finite: we will take $\xi_n\equiv 0$. Indeed, we first replace $e^{i(\cdot)\rho_n^{-1}\xi_n} \phi^{\alpha}$ with $e^{i(\cdot)a}\phi^{\alpha}$ by putting the difference into the error term; then we can reduce it further by regarding $e^{i(\cdot)a}\phi^\alpha$ as a new $\phi^\alpha$.
\end{remark}
Next we will show that the profiles obtained in \eqref{eq:prof-2} are strongly decoupled under the orthogonality condition \eqref{eq:ortho-2}; more general version is in Lemma \ref{le:weak-converg}.
To abuse the notation, we denote $$\widetilde{g_n^\alpha}(\phi^{\alpha})(x):=\sqrt{\rho_n}e^{s^{\alpha}_n\partial^3_x}
[e^{i(\cdot)\rho_n^{-1}\xi_n}\phi^{\alpha}(\cdot)](\rho_n x-y_n^{\alpha}),$$
 where $\xi_n\equiv 0$ when $\lim_{n\to \infty}\rho_n^{-1}\xi_n$ is finite. \begin{corollary}\label{coro:strong-decoupling}
Under \eqref{eq:ortho-2}, for any $\alpha\neq\beta$, we have
\begin{equation}\label{eq:spa-time-2}
\lim_{n\to\infty}\left|\langle\widetilde{g_n^\alpha}(\phi^{\alpha}),
\widetilde{g_n^\beta}(\phi^{\beta})\rangle_{L^2}\right|=0
\end{equation}
and for any $1\le \alpha \le A$, \begin{equation}\label{eq:spa-time-3}
    \lim_{n\to\infty}\left|\langle\widetilde{g_n^\alpha}(\phi^{\alpha}),e_n^A\rangle_{L^2}\right|=0.
    \end{equation}
\end{corollary}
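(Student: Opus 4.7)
The strategy for \eqref{eq:spa-time-2} is to reduce the pairing to an oscillatory integral in Fourier space whose cubic phase has coefficients governed exactly by \eqref{eq:ortho-2}. By density, assume $\phi^\alpha, \phi^\beta$ have Schwartz Fourier transforms. Performing the change of variable $y = \rho_n x$ (which cancels the $\sqrt{\rho_n}$ factors) followed by a translation by $y_n^\alpha$, then using Plancherel together with the identity $\mathcal{F}[e^{s\partial_x^3}g](\xi) = e^{-is\xi^3}\widehat{g}(\xi)$, the inner product becomes, up to a multiplicative constant and a phase factor of modulus one,
\[
\int e^{i\Psi_n(\eta)}\widehat{\phi^\alpha}(\eta)\overline{\widehat{\phi^\beta}(\eta)}\,d\eta,
\]
where, writing $a_n := \rho_n^{-1}\xi_n$,
\[
\Psi_n(\eta) = (s_n^\beta - s_n^\alpha)\eta^3 + 3(s_n^\beta - s_n^\alpha)a_n\,\eta^2 + \bigl(y_n^\beta - y_n^\alpha + 3(s_n^\beta - s_n^\alpha)a_n^2\bigr)\eta.
\]
The coefficients of $\eta^3$, $\eta^2$ and $\eta$ are precisely the three terms summed in \eqref{eq:ortho-2}, so at least one of them tends to $\infty$ along a subsequence.

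After truncating the rapidly decaying integrand to a fixed compact interval $|\eta|\le R$ (with Schwartz tail losses negligible), I would split into three cases according to which coefficient of $\Psi_n$ is dominant. If $|s_n^\beta - s_n^\alpha|\to\infty$, then $|\Psi_n'''|\equiv 6|s_n^\beta-s_n^\alpha|$ is large and van der Corput with $k=3$ applies; if the cubic coefficient stays bounded but the quadratic coefficient blows up, then $|\Psi_n''|$ is bounded below on $|\eta|\le R$ for large $n$ and van der Corput with $k=2$ applies; otherwise only the linear coefficient blows up, $|\Psi_n'|$ is bounded below by a large quantity, and one integration by parts in $\eta$ suffices. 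All three cases give that the oscillatory integral tends to zero, establishing \eqref{eq:spa-time-2}.

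For \eqref{eq:spa-time-3}, I would unwind the symmetries in $\langle \widetilde{g_n^\alpha}(\phi^\alpha), e_n^A\rangle$. Using the formula $e_n^A(x) = \sqrt{\rho_n}e^{ix\xi_n}P_n^A(\rho_n x)$, substituting $y = \rho_n x$, shifting by $y_n^\alpha$, moving $e^{-s_n^\alpha\partial_x^3}$ onto $P_n^A$ by unitarity, and commuting the modulation $e^{i\cdot a_n}$ past the translation, one rewrites the inner product as
\[
\bigl\langle \widetilde{g_n^\alpha}(\phi^\alpha), e_n^A\bigr\rangle = \bigl\langle \phi^\alpha, \psi_n^{\alpha,A}\bigr\rangle,\qquad \psi_n^{\alpha,A}(z) := e^{-iz a_n}e^{-s_n^\alpha\partial_x^3}\bigl[e^{i(\cdot)a_n}P_n^A(\cdot)\bigr](z+y_n^\alpha),
\]
which is precisely a member of the family defining $\mathcal{W}(P^A)$, with the special choice $(y_n, s_n) = (y_n^\alpha, s_n^\alpha)$. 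The concentration-compactness construction in the proof of Lemma \ref{le:spa-time} ensures that subtracting $\phi^\alpha$ at step $\alpha$ already kills its own weak limit, while each subsequent profile $\phi^\beta$ with $\beta>\alpha$ contributes zero to the weak limit at $(y_n^\alpha, s_n^\alpha)$ by the orthogonality \eqref{eq:ortho-2} of its parameters against those of index $\alpha$ (this is exactly the statement just proved for the profile-vs-profile pairing, tested against any Schwartz vector). Hence $\psi_n^{\alpha,A}\rightharpoonup 0$ in $L^2$ as $n\to\infty$ for each fixed $\alpha\le A$, and pairing with the fixed $\phi^\alpha$ yields \eqref{eq:spa-time-3}.

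The main obstacle is the oscillatory integral analysis in part one: correctly computing the phase $\Psi_n$ so that its three coefficients line up with \eqref{eq:ortho-2}, and then executing the van der Corput case split uniformly in the subdominant (bounded) coefficients. The second part is essentially algebraic once the isometry calculation is carried out, together with the standard parameter-orthogonality weak-decoupling inherent in the concentration-compactness construction.
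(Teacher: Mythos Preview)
Your argument for \eqref{eq:spa-time-2} is essentially the paper's: both reduce the pairing to an oscillatory integral whose cubic, quadratic and linear phase coefficients are exactly the three quantities in \eqref{eq:ortho-2}, and then apply van der Corput (the paper cites Stein's stationary-phase corollary). The only cosmetic difference is that you apply Plancherel to collapse everything to a single $\eta$-integral, whereas the paper keeps $\phi^\beta$ in physical space and bounds the inner Fourier integral pointwise before pairing against $|\phi^\beta|$.

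For \eqref{eq:spa-time-3} your route genuinely differs. The paper extends the decomposition to a deeper level $B>A$, writing $e_n^A=\sum_{\beta=A+1}^B\widetilde{g_n^\beta}(\phi^\beta)+e_n^B$; the finitely many new profile pairings vanish by part one, the tail $\langle\widetilde{g_n^\alpha}(\phi^\alpha),e_n^B\rangle$ is bounded by $\|\phi^\alpha\|_{L^2}\,\mu(P^B)$ via Cauchy--Schwarz, and one then lets $B\to\infty$. You instead decompose in the opposite direction, $P_n^A=P_n^\alpha-\sum_{\beta=\alpha+1}^A(\text{profile}_\beta)$, use that $P_n^\alpha$ already converges weakly to zero at the parameters $(y_n^\alpha,s_n^\alpha)$ by the very step in the concentration-compactness iteration that produced $\phi^\alpha$, and dispose of the finitely many intermediate profiles by the oscillatory-integral orthogonality of part one. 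Both arguments are correct; yours avoids the auxiliary $B\to\infty$ limit but reaches back into the iterative construction, whereas the paper's version treats the construction as a black box and only uses the quantitative output $\mu(P^B)\to 0$.
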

\begin{proof}Without loss of generality, we assume that $\phi^{\alpha}$ and $\phi^\beta$ are Schwartz functions with compact Fourier supports. We first prove \eqref{eq:spa-time-2}. By changing variables, we have
 \begin{align*}
    &\left|\langle\widetilde{g_n^\alpha}(\phi^{\alpha}),
   \widetilde{g_n^\beta}(\phi^{\beta})\rangle_{L^2}\right|\\
    &=\left|\langle \sqrt{\rho_n}e^{s^{\alpha}_n\partial^3_x}
  [e^{i(\cdot)\rho_n^{-1}\xi_n}\phi^{\alpha}(\cdot)](\rho_n x-y_n^{\alpha}),
  \sqrt{\rho_n}e^{s^{\beta}_n\partial^3_x}
  [e^{i(\cdot)\rho_n^{-1}\xi_n}\phi^{\beta}(\cdot)](\rho_n x-y_n^{\beta})\rangle_{L^2}\right|\\
   &=\left|\langle e^{-(s_n^\beta-s^{\alpha}_n)\partial^3_x}
  [e^{i(\cdot)\rho_n^{-1}\xi_n}\phi^{\alpha}(\cdot)]( x+y_n^{\beta}-y_n^\alpha),
  e^{ix\rho_n^{-1}\xi_n}\phi^{\beta}(x)\rangle_{L^2}\right|\\
  &\le \langle \left|\int e^{i\xi(x+y_n^\beta-y_n^\alpha+3\frac {(s_n^\beta-s_n^\alpha)\xi_n^2}{\rho_n^2})+i\xi^3(s_n^\beta-s_n^\alpha)+3i\xi^2
  \frac{(s_n^\beta-s_n^\alpha)\xi_n}{\rho_n}}\widehat{\phi^\alpha}(\xi)d\xi\right|,
  \left|\phi^\beta\right|\rangle_{L^2}.
    \end{align*}
Hence if \eqref{eq:ortho-2} holds, by using \cite[Corollary, p.334]{Stein:1993} or integration by parts combined with the dominated convergence theorem, it goes to zero as $n$ goes to infinity.

To prove \eqref{eq:spa-time-3}, we write $e_n^A=\sum_{\beta=A+1}^B \widetilde{g_n^\beta}(\phi^\beta)+e_n^B$ for any $B>A$. Recall
$$e_n^B=\sqrt{\rho_n}\left(e^{i(\cdot)\rho_n^{-1}\xi_n}P_n^B\right)(\rho_nx).$$
Then \begin{align*}&\left|\langle\widetilde{g_n^\alpha}(\phi^{\alpha}), e_n^A\rangle_{L^2}\right|
      \le \sum_{\beta=A+1}^B \left|\langle \widetilde{g_n^\alpha}(\phi^{\alpha}), \widetilde{g_n^\beta}(\phi^{\beta}) \rangle_{L^2}\right|\\
    &\qquad +\left|\langle \phi^\alpha, e^{-ix\rho_n^{-1}\xi_n}e^{-s_n^\alpha\partial_x^3}(e^{i(\cdot)\rho_n^{-1}\xi_n}P_n^B)(x+y_n^\alpha)
    \rangle_{L^2}\right|.
    \end{align*}
When $n$ goes to infinity, the first term goes to zero because of \eqref{eq:spa-time-2}. The second term is less than $\|\phi^\alpha\|_{L^2}\mu(P^B)$ by the definitions of $\mathcal{W}(P^B)$ and $\mu(P^B)$, and the Cauchy-Schwarz inequality; so it can be made arbitrarily small if taking $B$ large enough. Hence \eqref{eq:spa-time-3} is obtained by taking $B\to\infty$.
\end{proof}

\section{Preliminary decomposition:~real version}\label{sec:decomp-real}
To prove Theorem \ref{thm:Airy-prof-real}, we need the corresponding real version of lemmas in the previous section, especially of Lemma \ref{le:scale-core-complex}, \ref{le:spa-time}. To develop the real analogue of Lemma \ref{le:scale-core-complex}, we recall the following lemma due to Kenig-Ponce-Vega \cite{Kenig-Ponce-Vega:2000:KdV}.
\begin{lemma}\label{le:KPV-lemma}
Let $u_0\in L^2$ be a real-valued function with $\|u_0\|_{L^2}=1$. Then for any $\delta>0$, there are a sequence of real valued functions $f^1,\ldots, f^N$, $e^N$ and intervals $\tau_1,\ldots, \tau_N$, $N=N(\delta)\in \N$ and $C_\delta>0$, such that
\begin{equation*}
\overline{\widehat{f^j}}(\xi)=\widehat{f^j}(-\xi), \supp\widehat{f^j}\subset \tau_j\cup(-\tau_j), |\tau_j|=\rho_j,
\end{equation*}
\begin{equation*}
|\widehat{f^j}|\le C_\delta\rho_j^{-1/2},
\end{equation*}
and \begin{equation*}
u_0=\sum_{j=1}^N f^j+e^N,
\end{equation*}
with \begin{equation*}
\|u_0\|^2_{L^2}=\sum_{j=1}^N \|f^j\|^2_{L^2}+\|e^N\|^2_{L^2},
\end{equation*}
\begin{equation*}
\|D^{1/6}e^{-t\partial_x^3}e^N\|_{L^6_{t,x}}<\delta.
\end{equation*}
\end{lemma}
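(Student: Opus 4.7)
The plan is to mimic the iterative argument in the proof of Lemma \ref{le:scale-core-complex}, but now arranging each extracted piece to be real by symmetrizing its Fourier support about the origin. If $\|D^{1/6}e^{-t\partial_x^3}u_0\|_{L^6_{t,x}}\le \delta$, there is nothing to do. Otherwise, apply Lemma \ref{le:KPV-Strichartz} with $p=4/3$ to produce an interval $\tau_1 = [\xi_1-\rho_1,\xi_1+\rho_1]$ with
\begin{equation*}
\int_{\tau_1}|\widehat{u_0}|^{4/3}d\xi \gtrsim \delta^4\rho_1^{1/3}.
\end{equation*}
Since $u_0$ is real, $|\widehat{u_0}(\xi)|=|\widehat{u_0}(-\xi)|$, so we may assume $\xi_1\ge 0$, and the same lower bound holds on $-\tau_1$. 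Exactly as in the proof of Lemma \ref{le:scale-core-complex}, a Chebyshev argument shows that after discarding the set where $|\widehat{u_0}|>C_\delta\rho_1^{-1/2}$ (with $C_\delta\sim \delta^{-6}$), the $L^{4/3}$ mass of $\widehat{u_0}$ over $\tau_1$ and over $-\tau_1$ is still comparable to $\delta^4\rho_1^{1/3}$.

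Now define $f^1$ by
\begin{equation*}
\widehat{f^1}:=\widehat{u_0}\,\mathbf{1}_{(\tau_1\cup(-\tau_1))\,\cap\,\{|\widehat{u_0}|\le C_\delta\rho_1^{-1/2}\}}.
\end{equation*}
Because the set on which we restrict is symmetric under $\xi\mapsto -\xi$ and $\widehat{u_0}$ satisfies the Hermitian symmetry $\overline{\widehat{u_0}}(\xi)=\widehat{u_0}(-\xi)$, the function $\widehat{f^1}$ also enjoys this symmetry; hence $f^1$ is real. It obeys the pointwise bound $|\widehat{f^1}|\le C_\delta\rho_1^{-1/2}$, has Fourier support in $\tau_1\cup(-\tau_1)$, and by the $L^{4/3}$ estimate combined with Hölder one obtains $\|f^1\|_{L^2}^2\gtrsim \delta^6$. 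Since $\widehat{f^1}$ and $\widehat{u_0-f^1}$ have disjoint supports,
\begin{equation*}
\|u_0\|_{L^2}^2 = \|f^1\|_{L^2}^2 + \|u_0-f^1\|_{L^2}^2,
\end{equation*}
and $u_0-f^1$ is again real.

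Now iterate: replace $u_0$ by $u_0-f^1$, which is still of norm $\le 1$ and real, and repeat. At each stage the squared $L^2$-norm strictly decreases by at least a fixed amount $c\delta^6$. After at most $N=N(\delta)\lesssim \delta^{-6}$ iterations either the refined Strichartz-type quantity in Lemma \ref{le:KPV-Strichartz} falls below the threshold that forces $\|D^{1/6}e^{-t\partial_x^3}e^N\|_{L^6_{t,x}}<\delta$, or we have exhausted the available $L^2$ mass; either way the algorithm terminates with the required decomposition.

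The main technical obstacle I anticipate is the case $\xi_1\le \rho_1$, so that $0\in \tau_1$ and $\tau_1\cap(-\tau_1)\neq\emptyset$. In that case $\tau_1\cup(-\tau_1)$ is a single symmetric interval of length $2(\xi_1+\rho_1)\le 4\rho_1$, and one must verify that the sup-bound $C_\delta\rho_1^{-1/2}$ and the $L^2$ lower bound $\gtrsim \delta^6$ survive with only changes in constants. This follows by the same Chebyshev and Hölder computations, since $|\tau_1\cup(-\tau_1)|\sim \rho_1$ in all cases. Everything else — the pairwise disjointness of Fourier supports across iterations, the telescoping Pythagorean identity, and the termination — is a straightforward real-valued transcription of the complex argument.
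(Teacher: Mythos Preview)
Your proposal is correct and follows exactly the approach the paper indicates: the paper does not spell out a proof of this lemma but simply remarks that it is proved as in Lemma~\ref{le:scale-core-complex} using the Hermitian symmetry $\overline{\widehat{u_0}}(\xi)=\widehat{u_0}(-\xi)$ for real $u_0$, and later records that the resulting pieces satisfy $\widehat{f^j}=\widehat{u_0}\,1_{\{\xi\in\tau_j\cup(-\tau_j):\,|\widehat{u_0}|\le C_\delta\rho_j^{-1/2}\}}$ with $\tau_j\subset(0,\infty)$. Your iterative extraction with symmetrized Fourier supports is precisely this, and your remark on the overlap case $0\in\tau_1$ is a harmless bookkeeping point (the paper sidesteps it by taking $\tau_j\subset(0,\infty)$, which you could equally arrange by halving the interval when it straddles the origin).
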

The proof of this lemma is similar to that of the previous Lemma \ref{le:scale-core-complex} with the help that, for real functions $f$, $\overline{\widehat{f}}(\xi)=\widehat{f}(-\xi)$. For our purpose, we will do a little more on the decomposition above. Indeed, from the proof in \cite{Kenig-Ponce-Vega:2000:KdV} we know that $\widehat{f^j}(\xi)=1_{\{\xi\in \tau_j\cup(-\tau_j):~|\widehat{u_0}|\le C_\delta\rho_j^{-1/2}\}}\widehat{u_0}(\xi)$ and $\tau_j\subset(0,\infty)$. We can decompose $f^j$ further by setting
\begin{align*}
f^j&:=f^{j,+}+f^{j,-}, \\
\widehat{f^{j,+}}&:=1_{\{\xi\in \tau_j:~|\widehat{u_0}|\le C_\delta\rho_j^{-1/2}\}}\widehat{u_0},\\
\widehat{f^{j,-}}&:=1_{\{\xi\in -\tau_j:~|\widehat{u_0}|\le C_\delta\rho_j^{-1/2}\}}\widehat{u_0}.
\end{align*}
Since $u_0$ is real, $\overline{\widehat{u_0}}(\xi)=\widehat{u_0}(-\xi)$, which yields that $$\overline{\widehat{f^{j,+}}}(\xi)=\widehat{f^{j,-}}(-\xi), \text{ and }f^{j,-}=\overline{f^{j,+}}.$$
Hence $$f^j=2\re f^{j,+}.$$
Now we return to prove Theorem \ref{thm:Airy-prof-real}. We repeat the process above for each real valued $u_n$ to obtain $v^1_n,\ldots, v^N_n$ and real-valued $e_n^N$ such that
\begin{equation}\label{eq:decmp-real-1}
u_n=\sum_{j=1}^N 2\re (v_n^j) +e_n^N,
\end{equation}
and \begin{equation}\label{eq:decmp-real-2}
\sqrt{\rho_n^j}|\widehat{v_n^j}(\rho_n^j\xi+\xi_n^j)|\le C_\delta 1_K(\xi), \text{ with } \xi_n^j>0,
\text{ for some compact } K,
\end{equation}
\begin{equation}\label{eq:decmp-real-3}
\|u_n\|^2_{L^2}=\sum_{j=1}^N4\|\re (v_n^j)\|^2_{L^2}+\|e_n^N\|^2_{L^2}.
\end{equation}
Still we define the real version of the orthogonality condition on the sequence $(\rho_n^j,\xi_n^j)_{n\ge 1}\in (0,+\infty)^2$ as before: for $j\neq k$,
\begin{equation}\label{eq:decmp-real-4}\lim_{n\to \infty}\left(\dfrac
{\rho_n^j}{\rho_n^k}+\dfrac {\rho_n^k}{\rho_n^j}+\dfrac {|\xi_n^j-\xi_n^k|}
{\rho_n^j}\right)=\infty.\end{equation}
Based on \eqref{eq:decmp-real-1} and \eqref{eq:decmp-real-2}, the basic idea of obtaining the real version is to apply the procedure in the previous section to $v_n^j$, and then take the real part. The only issue here is to show that the error term is still small in the Strichartz norm, and the almost orthogonality in $L^2$ norm still holds. We omit the details and state the following
\begin{lemma}[Real version: extraction of $\rho_n^j$ and $\xi_n^j$]\label{le:scales-cores-real}
Let $(u_n)_{n\ge 1}$ be a sequence of real-valued functions with $\|u_n\|_{L^2}\le 1$. Then up to a subsequence, for any $\delta>0$, there exists $N=N(\delta)$, an orthogonal family $(\rho_n^j, \xi_n^j)_{1\le j\le N \atop n\ge 1}\in (0,\infty)^2$ satisfying \eqref{eq:decmp-real-4} and a sequence $(f_n^j)_{1\le j\le N\atop n\ge 1}\in L^2$ such that, for every $1\le j\le N$, there is a compact set K in $\R$  such that
\begin{equation}\label{eq:cmp-suppt-real}
\sqrt{\rho_n^j} |\widehat{f_n^j}(\rho_n^j\xi+\xi_n^j)|\le C_\delta 1_K(\xi),
\end{equation}
and for any $N\ge 1$, there exists a real valued $q_n^N\in L^2$ such that
\begin{equation}
\label{eq:prof-0-real}u_n=2\sum_{j=1}^{N}\re(f^j_n)+q_n^N,
\end{equation}with
\begin{equation}\label{eq:err-0-real}
\|D^{\frac 16}e^{-t\partial_x^3}q_n^N\|_{L^6_{t,x}}\le \delta,
\end{equation} and for any $N\ge 1$,
\begin{equation}\label{eq:almost-ortho-0-real}
\lim_{n\to\infty}\left(\|u_n\|^2_{L^2}-\left(\sum_{j=1}^{N}4\|\re(f^j_n)\|^2_{L^2}+\|q_n^N\|^2_{L^2}\right)
\right)=0.
\end{equation}
\end{lemma}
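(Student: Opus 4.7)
Proof proposal for Lemma \ref{le:scales-cores-real}.

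The plan is to mimic the argument for Lemma \ref{le:scale-core-complex} with the KPV real-variable decomposition (Lemma \ref{le:KPV-lemma}) playing the role of the single extraction step, and then to ``symmetrize'' at each step so that real-valuedness is preserved throughout. Concretely, I would induct on the Strichartz norm: if $\|D^{1/6}e^{-t\partial_x^3}u_n\|_{L^6_{t,x}}\le\delta$ there is nothing to do. Otherwise, apply Lemma \ref{le:KPV-Strichartz} with $p=4/3$ to produce an interval $\tau_n^1=[\xi_n^1-\rho_n^1,\xi_n^1+\rho_n^1]$ with $\xi_n^1>0$ on which $|\widehat{u_n}|^{4/3}$ concentrates; then thresholding at level $C_\delta(\rho_n^1)^{-1/2}$ and using the Chebyshev argument from the complex case gives a function $v_n^1$ supported in $\tau_n^1$ with the amplitude bound $\sqrt{\rho_n^1}|\widehat{v_n^1}(\rho_n^1\xi+\xi_n^1)|\le C_\delta 1_K(\xi)$ and $\|v_n^1\|_{L^2}\gtrsim \delta^3$. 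Following the paragraph preceding the lemma, I then take its real counterpart $2\re(v_n^1)$, whose Fourier transform lives in $\tau_n^1\cup(-\tau_n^1)$.

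Since $u_n$ is real and the Fourier supports of $2\re(v_n^1)$ and $u_n-2\re(v_n^1)$ are disjoint (a direct consequence of $\overline{\widehat{u_n}}(\xi)=\widehat{u_n}(-\xi)$ and of $v_n^1$ being supported in the positive half-line only), we get the Pythagorean identity
\begin{equation*}
\|u_n\|_{L^2}^2=4\|\re(v_n^1)\|_{L^2}^2+\|u_n-2\re(v_n^1)\|_{L^2}^2.
\end{equation*}
Replace $u_n$ with $u_n-2\re(v_n^1)$ and repeat. At each round the $L^2$-mass drops by at least a constant multiple of $\delta^6$, so the iteration terminates after $N=N(\delta)$ steps, yielding \eqref{eq:decmp-real-1}, \eqref{eq:decmp-real-2}, \eqref{eq:decmp-real-3} as stated in the preamble of the lemma, with $q_n^N$ automatically real because every piece removed is real.

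To conclude, I re-group exactly as in the complex case. Declare $\gamma_n^j=(\rho_n^j,\xi_n^j)$ and $\gamma_n^k$ \emph{non-orthogonal} if \eqref{eq:decmp-real-4} fails, define $f_n^1$ to be the sum of all $v_n^j$ whose parameters are non-orthogonal to $\gamma_n^1$, then take the least $j_0$ with $\gamma_n^{j_0}$ orthogonal to $\gamma_n^1$ and let $f_n^2$ collect all $v_n^j$ non-orthogonal to $\gamma_n^{j_0}$, etc. The resulting $(\rho_n^j,\xi_n^j)_{1\le j\le N}$ then satisfy \eqref{eq:decmp-real-4}, and since the Fourier supports of the distinct grouped pieces remain pairwise disjoint, \eqref{eq:almost-ortho-0-real} follows. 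The error bound \eqref{eq:err-0-real} is inherited verbatim from the iterative stopping criterion, and \eqref{eq:prof-0-real} is the repackaged decomposition.

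The main point that needs a small argument is \eqref{eq:cmp-suppt-real}: after re-grouping, one has to verify that $\sqrt{\rho_n^j}\widehat{f_n^j}(\rho_n^j\cdot+\xi_n^j)$ is still dominated by $C_\delta 1_K$ for a single compact $K$ independent of $n$. This is exactly the computation at the end of the proof of Lemma \ref{le:scale-core-complex}: up to a subsequence, for any $v_n^k$ collected inside $f_n^j$ the rescaling operator $G_n^j(G_n^k)^{-1}(g)(\xi)=\sqrt{\rho_n^j/\rho_n^k}\,g(\rho_n^j(\rho_n^k)^{-1}\xi+(\xi_n^j-\xi_n^k)/\rho_n^k)$ has bounded parameters (because of non-orthogonality), so the image of $1_{[-1,1]}$ stays inside some fixed compact set. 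This is the only genuinely technical point, and it transfers from the complex case with no change because taking real parts only symmetrizes the supports around the origin and does not affect the rescaling estimates.
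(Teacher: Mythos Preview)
Your proposal is correct and follows essentially the same route as the paper, which itself only sketches the argument (it applies Lemma~\ref{le:KPV-lemma} to each $u_n$, splits each real piece as $2\re(v_n^j)$ with $\widehat{v_n^j}$ supported on the positive half-line, and then re-groups exactly as in Lemma~\ref{le:scale-core-complex}). One small point to tighten: you write $\tau_n^1=[\xi_n^1-\rho_n^1,\xi_n^1+\rho_n^1]$ with $\xi_n^1>0$ and later invoke that $\widehat{v_n^1}$ is supported in the positive half-line; to make these consistent (and to guarantee the disjointness of the Fourier supports of $2\re(v_n^1)$ and the remainder that you use for the Pythagorean identity), you should explicitly take $\tau_n^1\subset(0,\infty)$, e.g.\ by intersecting with $(0,\infty)$ and absorbing the harmless factor of two into the constants, exactly as in the KPV construction the paper cites.
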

Then we focus on decomposing $f_n^j$ further as in Lemma \ref{le:spa-time}. Taking real parts automatically produces a decomposition for $\re(f_n^j)$. We will be sketchy on how to resolve issues of the convergence of the error term and the almost $L^2$ orthogonality.
\begin{lemma}[Real version:~extraction of $x_n^{j,\alpha}$ and $s_n^{j,\alpha}$]\label{le:spa-time-real}
Let $(f_n)_{n\ge 1}$ be a sequence of real-valued functions and $\|f_n\|_{L^2}\le 1$ satisfying
\begin{equation*}
\sqrt{\rho_n}|\widehat{f_n}(\rho_n(\xi+(\rho_n)^{-1}\xi_n))|\le F(\xi)
\end{equation*} with $F\in L^{\infty}(K)$ for some compact set $K$ and $\xi_n>0$.
Then up to a subsequence, there exists a family $(y_n^{\alpha}, s_n^{\alpha})\in \R\times\R$ and a sequence of complex-valued functions $(\phi^{\alpha})_{\alpha\ge 1}\in L^2$ such that,
if $\alpha\neq \beta$,
\begin{equation}\label{eq:ortho-s-t-real}
\lim_{n\to\infty}\left(\left|y_n^{\beta}-y_n^{\alpha}+\dfrac {3(s_n^{\beta}-s_n^{\alpha})(\xi_n)^2}{(\rho_n)^2}\right|+\left|\dfrac
{3(s_n^{\beta}-s_n^{\alpha})\xi_n}{\rho_n}\right|+\left|s_n^{\beta}-s^{\alpha}_n\right|\right)=\infty,
\end{equation}
and for each $A\ge 1$, there exists $e_n^A\in L^2$ of complex value such that
\begin{equation}\label{eq:prof-s-t-real}
f_n(x)=\sum_{\alpha=1}^{A}\widetilde{g_n^\alpha}(\phi^\alpha)(x)+\re(e_n^A)(x),
\end{equation} where
$$\widetilde{g_n^\alpha}(\phi^\alpha)(x)=\sqrt{\rho_n}e^{s^{\alpha}_n\partial^3_x}[\re(e^{i(\cdot)\rho_n^{-1}\xi_n} \phi^{\alpha})](\rho_nx-y_n^{\alpha}),$$ with $\xi_n^j\equiv 0$ when $\rho^{-1}_n\xi_n$ converges to some finite limit, and \begin{equation}\label{eq:err-s-t-real}
\lim_{A\to \infty}\lim_{n\to \infty}\|D^{\frac 16}e^{-t\partial_x^3}\re(e_n^A)\|_{L^6_{t,x}}=0, \end{equation}
and for any $A\ge 1$,
\begin{equation}\label{eq:almost-ortho-s-t-real}
\lim_{n\to\infty}\left(\|f_n\|^2_{L^2}-\left(\sum_{\alpha=1} ^{A}\|\re(e^{i(\cdot)\rho_n^{-1}\xi_n} \phi^{\alpha})\|^2_{L^2}+\|\re (e^A_n)\|^2_{L^2}\right)\right)=0.
\end{equation}
Moreover, for any $\alpha\neq\beta$,
   \begin{equation}\label{eq:s-t-real-1}
   \lim_{n\to\infty}\left|\langle\widetilde{g_n^\alpha}(\phi^{\alpha}),
   \widetilde{g_n^\beta}(\phi^{\beta})\rangle_{L^2}\right|=0,
    \end{equation}
and for any $1\le \alpha\le A$, \begin{equation}\label{eq:s-t-real-2}
    \lim_{n\to\infty}\left|\langle\widetilde{g_n^\alpha}(\phi^{\alpha}),\re(e_n^A)\rangle_{L^2}\right|=0.
    \end{equation}
\end{lemma}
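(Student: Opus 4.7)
The plan is to apply the complex-valued Lemma \ref{le:spa-time} directly to $f_n$ and then take real parts, since every operation entering the decomposition on the right-hand side of \eqref{eq:prof-s-t-real} commutes with complex conjugation. After the rescaling $\widehat{P_n}(\xi):=\sqrt{\rho_n}\widehat{f_n}(\rho_n(\xi+\rho_n^{-1}\xi_n))$ and the concentration-compactness argument of Lemma \ref{le:spa-time}, we obtain pairwise orthogonal sequences $(y_n^\alpha,s_n^\alpha)$ satisfying \eqref{eq:ortho-s-t-real}, complex profiles $\phi^\alpha\in L^2$, and a complex error $e_n^A$ such that
\begin{equation*}
f_n(x)=\sum_{\alpha=1}^{A}\sqrt{\rho_n}\,e^{s_n^\alpha\partial_x^3}\bigl[e^{i(\cdot)\rho_n^{-1}\xi_n}\phi^\alpha\bigr](\rho_n x-y_n^\alpha)+e_n^A(x),
\end{equation*}
with $\lim_{A\to\infty}\lim_{n\to\infty}\|D^{1/6}e^{-t\partial_x^3}e_n^A\|_{L^6_{t,x}}=0$. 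Remark \ref{re:reduction} lets us set $\xi_n\equiv 0$ whenever $\rho_n^{-1}\xi_n$ stays bounded. Taking the real part of both sides (using that $f_n$ is real and that $e^{s\partial_x^3}$, spatial translations, and rescalings all commute with $\re$) produces exactly \eqref{eq:prof-s-t-real}.

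The Strichartz estimate \eqref{eq:err-s-t-real} follows immediately once one observes that $D^{1/6}$ and $e^{-t\partial_x^3}$ are Fourier multipliers with symmetric real symbols, so they commute with $\re$; hence $D^{1/6}e^{-t\partial_x^3}\re(e_n^A)=\re\bigl(D^{1/6}e^{-t\partial_x^3}e_n^A\bigr)$ and the desired decay is inherited from the complex case. For the almost $L^2$ orthogonality \eqref{eq:almost-ortho-s-t-real} and the decoupling \eqref{eq:s-t-real-1}--\eqref{eq:s-t-real-2}, I would expand every inner product via $\re(h)=(h+\overline{h})/2$. This splits each cross term into four pieces: two sesquilinear inner products of the form $\langle g_n^\alpha,g_n^\beta\rangle_{L^2}$ and its conjugate, which vanish by the argument of Corollary \ref{coro:strong-decoupling} applied to the real orthogonality condition \eqref{eq:ortho-s-t-real}, and two bilinear pairings $\int g_n^\alpha g_n^\beta\,dx$ (with $g_n^\alpha$ denoting the complex profile of the complex decomposition).

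The main obstacle is the bilinear terms, which are not covered by Corollary \ref{coro:strong-decoupling}. When $|\rho_n^{-1}\xi_n|\to\infty$, Parseval shows that $\widehat{g_n^\alpha}$ is concentrated near $\xi_n$ while the reflected $\widehat{g_n^\beta}(-\cdot)$ is concentrated near $-\xi_n$, so the supports are disjoint for large $n$ and the pairing vanishes identically. When $\xi_n\equiv 0$, a direct Fourier computation reduces $\int g_n^\alpha g_n^\beta\,dx$ to an oscillatory integral of the form
\begin{equation*}
\int e^{i\eta(y_n^\beta-y_n^\alpha)+i\eta^3(s_n^\beta-s_n^\alpha)}\widehat{\phi^\alpha}(\eta)\widehat{\phi^\beta}(-\eta)\,d\eta,
\end{equation*}
which tends to zero by Riemann--Lebesgue or stationary-phase arguments of the kind used in Remark \ref{re:airy-schr}, since the orthogonality \eqref{eq:ortho-s-t-real} forces $|y_n^\beta-y_n^\alpha|+|s_n^\beta-s_n^\alpha|\to\infty$ in this regime. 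The cross term $\langle\widetilde{g_n^\alpha}(\phi^\alpha),\re(e_n^A)\rangle_{L^2}$ is handled by the same splitting, using the definition of $\mathcal{W}(P^A)$ and $\mu(P^A)$ together with the diagonal argument of Corollary \ref{coro:strong-decoupling}, and \eqref{eq:almost-ortho-s-t-real} then follows by expanding $\|f_n\|^2_{L^2}$ and discarding the vanishing cross terms.
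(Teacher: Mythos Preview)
Your proposal is correct and follows essentially the same strategy as the paper: apply the complex Lemma \ref{le:spa-time}, take real parts using that the relevant operators commute with $\re$, and deduce \eqref{eq:s-t-real-1}--\eqref{eq:s-t-real-2} by Corollary \ref{coro:strong-decoupling}-type arguments. Your treatment of the bilinear cross terms $\int g_n^\alpha g_n^\beta\,dx$ (via disjoint Fourier supports when $|\rho_n^{-1}\xi_n|\to\infty$ and an oscillatory integral when $\xi_n\equiv 0$) is in fact more explicit than the paper, which simply asserts that these follow ``similarly as in Corollary \ref{coro:strong-decoupling}.''
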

\begin{proof} We briefly describe how to obtain these identities.
Equations \eqref{eq:ortho-s-t-real}, \eqref{eq:prof-s-t-real} follow along similar lines as in Lemma \ref{le:spa-time}. Equation \eqref{eq:err-s-t-real} follows from \eqref{eq:err-2} and the following point-wise inequality
$$|D^{\frac 16}e^{-t\partial_x^3}\re(e_n^A)(x)|=|\re(D^{\frac 16}e^{-t\partial_x^3}e_n^A)(x)|\le
|D^{\frac 16}e^{-t\partial_x^3}e_n^A(x)|.$$
Equation \eqref{eq:almost-ortho-s-t-real} follows from \eqref{eq:s-t-real-1} and \eqref{eq:s-t-real-2}, which are proven similarly as in Corollary \ref{coro:strong-decoupling}.
\end{proof}

\section{Final decomposition:~proof of Theorems \ref{thm:Airy-prof} and \ref{thm:Airy-prof-real}}\label{sec:proof-airy-prof}
In this section, we will only prove the complex version Theorem \ref{thm:Airy-prof}
by following the approach in \cite{Keraani:2001:profile-schrod-H^1}; the real version Theorem \ref{thm:Airy-prof-real} can be obtained similarly. We go back to the decompositions
\eqref{eq:prof-0}, \eqref{eq:prof-2} and set
$$(h_n^j, \xi_n^j,x_n^{j,\alpha},t_n^{j,\alpha}):=((\rho_n^j)^{-1},
\xi_n^j,(\rho_n^j)^{-1}y_n^{j,\alpha}, (\rho_n^j)^{-3}s_n^{j,\alpha}).$$
Then we use Remark \ref{re:reduction} and put all the error terms together,
\begin{equation}\label{eq:prof-4}
u_n=\sum_{1\le j\le N, \xi_n^j\equiv 0\atop \text{ or } |h_n^j\xi_n^j|\to \infty}\sum_{\alpha=1}^{A_j}
e^{t_n^{j,\alpha}\partial_x^3}g_n^{j,\alpha}[e^{i(\cdot)h_n^j\xi_n^j}\phi^{j,\alpha}]+w_n^{N, A_1,\ldots, A_N},
\end{equation} where $g_n^{j,\alpha}=g_{0,x_n^{j,\alpha}, h_n^j}\in G$ and
\begin{equation}\label{eq:err-3}
 w_n^{N, A_1,\ldots, A_N}=\sum_{j=1}^N e_n^{j, A_j}+q_n^N.
\end{equation}
We enumerate the pairs $(j,\alpha)$ by $\omega$ satisfying
\begin{equation}\label{eq:re-label}
 \omega(j,\alpha)<\omega(k,\beta) \text{ if } j+\alpha<k+\beta \text{ or }j+\alpha=k+\beta \text{ and
 }j<k.
\end{equation}  After re-labeling, Equation \eqref{eq:prof-4} can be further rewritten as
 \begin{equation}\label{eq:prof-7}
u_n=\sum_{1\le j\le l, \xi_n^j\equiv 0\atop \text{ or } |h_n^j\xi_n^j|\to \infty}
e^{t_n^j\partial_x^3}g_n^j[e^{i(\cdot)h_n^j\xi_n^j}\phi^j]+w_n^l,
\end{equation}  where $w_n^l=w_n^{N, A_1,\ldots, A_N}$ with $l=\sum_{j=1}^N A_j$. To establish Theorem \ref{thm:Airy-prof}, we are thus left with three points to investigate.

$\mathbf{\emph{1.}}$ The family $\Gamma_n^j=(h_n^j,\xi_n^j,t_n^j,x_n^j)$ is pairwise orthogonal, i.e., satisfying Definition \ref{def-ortho}. In fact, we have two possibilities:
\begin{itemize}
\item The two pairs are in the form $\Gamma_n^j=(h_n^i, \xi_n^i, t_n^{i,\alpha}, x_n^{i,\alpha})$ and
$\Gamma_n^k=(h_n^m,\xi_n^m,t_n^{m,\beta},x_n^{m,\beta})$ with $i\neq m$. In
this case, the orthogonality follows from that \begin{equation*}\lim_{n\to \infty} \left(\frac
{h_n^i}{h_n^m}+\frac {h_n^m}{h_n^i}+h_n^i|\xi_n^i-\xi_n^m|\right)=\infty, \end{equation*} which is \eqref{eq:ortho-0} in Lemma \ref{le:scale-core-complex}.

\item The two pairs are in form $\Gamma_n^j=(h_n^i, \xi_n^i,t_n^{i,\alpha}, x_n^{i,\alpha})$ and
$\Gamma_n^k=(h_n^i,\xi_n^i,t_n^{i,\beta},x_n^{i,\beta})$ with $\alpha\neq \beta$. In this case, the orthogonality follows from
$$\lim_{n\to \infty}\left(\frac {|t_n^{i,\beta}-t_n^{i, \alpha}|}{(h_n^i)^3}+\frac {3|t_n^{i,\beta}-
t_n^{i,\alpha}||\xi_n^i|}{(h_n^i)^2}+\left|\frac
{x_n^{i,\beta}-x_n^{i,\alpha}+3(t_n^{i,\beta}-t_n^{i,\alpha})(\xi_n^i)^2}{h_n^i}\right|\right)=\infty,$$
which is \eqref{eq:ortho-2} in Lemma \ref{le:spa-time}.
\end{itemize}
$\mathbf{\emph{2.}}$ The almost orthogonality identity \eqref{eq:almost-ortho} is satisfied. In fact, combining \eqref{eq:almost-ortho-0} and \eqref{eq:almost-ortho-2}, we obtain that
for any $N\ge 1$,
\begin{align*}
 \|u_n\|^2_{L^2} &=\sum_{j=1}^{N}\left(\sum_{\alpha=1}^{A_j} \|\phi^{j,\alpha}\|^2_{L^2}
 + \|e_n^{j,A_j}\|^2_{L^2}\right)+ \|q_n^N\|^2_{L^2}+o_n(1)\\
 &=\sum_{j=1}^{N}\left(\sum_{\alpha=1}^{A_j} \|\phi^{j,\alpha}\|^2_{L^2}
 \right)+ \|w_n^{N,A_1,\ldots,A_N}\|^2_{L^2}+o_n(1)\\
 &=\sum_{j=1}^{l}\|\phi^j\|^2_{L^2}+ \|w_n^l\|^2_{L^2}+o_n(1),\\
\end{align*} where $\lim_{n\to\infty}o_n(1)=0$. Note that we have used the fact that $$\|w_n^l\|^2_{L^2}=\|w_n^{N,A_1,\ldots,A_N}\|^2_{L^2}=\sum_{j=1}^N\|e_n^{j,A_j}\|^2_{L^2}+\|q_n^N\|^2_{L^2},$$
which is due to the disjoint supports on the Fourier side.

$\mathbf{\emph{3.}}$ The remainder $e^{-t\partial_x^3}\omega_n^{N, A_1, \ldots, A_N}$ converges to zero in the Strichartz norm. In view of the adapted enumeration, we have to
prove that
\begin{equation}\label{eq:err-4}
 \lim_{n\to \infty} \|D^{1/6}e^{-t\partial_x^3}\omega_n^{N, A_1, \ldots, A_N}\|_{L^6_{t,x}}
 \to 0, \text{ as } \inf_{1\le j\le N}\{N, j+A_j\}\to \infty.
\end{equation}
Let $\delta>0$ be an arbitrarily small number. Take $N_0$ such that, for every $N\ge N_0$,
\begin{equation}\label{eq:err-4-1}
\lim_{n\to \infty}\|D^{1/6} e^{-t\partial_x^3} q_n^N\|_{L^6_{t,x}}\le \delta/3.
\end{equation}
For every $N\ge N_0$, there exists $B_N$ such that, whenever $A_j\ge B_N$,
\begin{equation}\label{eq:err-4-2}
\lim_{n\to \infty}\|D^{1/6} e^{-t\partial_x^3} e_n^{j, A_j}\|_{L^6_{t,x}}\le
\delta/3N.
\end{equation}
The remainder $w_n^{N,A_1,\ldots,A_N}$ can be rewritten in the form
\begin{equation*}
  w_n^{N,A_1,\ldots,A_N}=q_n^N+\sum_{1\le j\le N} w_n^{j, A_j\vee B_N}+S_n^{N, A_1,\ldots, A_N},
\end{equation*} where $A_j\vee B_N:=\max\{A_j,B_N\}$ and
\begin{equation*}
S_n^{N, A_1,\ldots, A_N}=\sum_{1\le j\le N\atop A_j<B_N} (w_n^{j, A_j}-w_n^{j, B_N}),
\end{equation*}that is,
\begin{equation*}
S_n^{N, A_1,\ldots, A_N}=\sum_{1\le j\le N\atop A_j<B_N} \sum_{A_j<\alpha\le
B_N}e^{t_n^{j,\alpha}\partial_x^3}g_n^{j,\alpha}[e^{i(\cdot) h_n^j\xi_n^j}\phi^{j,\alpha}]
\end{equation*}  with $\xi_n^j\equiv 0$ when $\lim_{n\to \infty}|h_n^j\xi_n^j|<\infty$.
From \eqref{eq:err-4-1} and \eqref{eq:err-4-2}, it follows that
\begin{equation}\label{eq:err-4-3}
\lim_{n\to \infty}\|D^{1/6} e^{-t\partial_x^3} w_n^{N,A_1,\ldots,A_N}\|_{L^6_{t,x}}\le 2\delta/3+\lim_{n\to \infty}\|D^{1/6} e^{-t\partial_x^3} S_n^{N,
A_1,\ldots,A_N}\|_{L^6_{t,x}}.
\end{equation}
Now we need the following almost-orthogonality result
\begin{lemma}\label{le:weak-converg-2}
Let $\Gamma_n^j=(h_n^j, \xi_n^j, x_n^j, t_n^j)$ be a family of orthogonal sequences. Then for every
$l\ge 1$,
\begin{equation}\label{eq:weak-converg-2}
\lim_{n\to \infty}\left(\|\sum_{j=1}^l D^{1/6}e^{-(t-t_n^j)\partial_x^3}g_n^j[e^{i(\cdot)h_n^j\xi_n^j}\phi^j]\|^6_{L^6_{t,x}}-
\sum_{j=1}^l \|D^{1/6}e^{-(t-t_n^j)\partial_x^3}g_n^j[e^{i(\cdot)h_n^j\xi_n^j}\phi^j]\|^6_{L^6_{t,x}}\right)=0,
\end{equation}with $\xi_n^j\equiv 0$ when $\lim_{n\to \infty}|h_n^j\xi_n^j|<\infty$.
\end{lemma}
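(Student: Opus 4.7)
My plan is to apply the Br\'ezis--Lieb lemma after a change of variables adapted to one profile at a time, reducing the decoupling identity to pointwise a.e.\ vanishing of the rescaled ``other'' profiles, and then iterating.

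\textbf{Reductions.} Write $\Phi_n^j := D^{1/6}e^{-(t-t_n^j)\partial_x^3}g_n^j[e^{i(\cdot)h_n^j\xi_n^j}\phi^j]$. By the Strichartz inequality \eqref{eq:airy-strichartz}, $\|\Phi_n^j\|_{L^6_{t,x}}\lesssim\|\phi^j\|_{L^2}$, and by continuity of the Strichartz norm in $\phi^j\in L^2$, a density argument lets me assume each $\phi^j\in\S$ has Fourier support contained in a fixed compact set.

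\textbf{Change of variables adapted to profile $l$.} Fix $l$ and change coordinates to $(t',x')$ adapted to $\Gamma_n^l$: when $\xi_n^l\equiv 0$, set $x'=(x-x_n^l)/h_n^l$ and $t'=(t-t_n^l)/(h_n^l)^3$, under which $\Phi_n^l(t,x)=(h_n^l)^{-2/3}\Psi^l(t',x')$ with the fixed function $\Psi^l:=D^{1/6}e^{-t'\partial_x^3}\phi^l$; when $|h_n^l\xi_n^l|\to\infty$, use the coordinates of Remark~\ref{re:airy-schr}, under which the rescaled $\Phi_n^l$ converges a.e.\ and in $L^6_{t',x'}$ (by dominated convergence against the $L^6$-envelope $B(t',x')$) to $3^{-1/6}$ times the Schr\"odinger profile $\Psi^l(t',x'):=e^{-it'\partial_x^2}\phi^l(x')$. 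Denote by $\tilde\Phi_n^j$ the image of $\Phi_n^j$ under this change of variables.

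\textbf{Pointwise vanishing of other profiles.} The heart of the argument is the claim: for every $j\ne l$, $\tilde\Phi_n^j\to 0$ pointwise a.e. The orthogonality of $\Gamma_n^j$ and $\Gamma_n^l$ in Definition~\ref{def-ortho} implies that either the scale ratio $h_n^l/h_n^j+h_n^j/h_n^l$ or a weighted frequency gap blows up---in which case a prefactor times the decay envelope $B$ sends $\tilde\Phi_n^j$ to $0$---or the space--time translation drives the argument of $\Psi^j$ out to infinity, where the decay of $B$ again forces vanishing. I split into cases according to the Airy/Schr\"odinger regime of each of $\Gamma_n^j$ and $\Gamma_n^l$ and verify pointwise vanishing explicitly from the integral representation of Remark~\ref{re:airy-schr}.

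\textbf{Br\'ezis--Lieb and iteration.} Set $f_n:=\sum_{j=1}^{l}\tilde\Phi_n^j$. Then $f_n\to\Psi^l$ a.e.\ and $\sup_n\|f_n\|_{L^6_{t',x'}}<\infty$, so the Br\'ezis--Lieb lemma yields
\begin{equation*}
\|f_n\|_{L^6}^6-\|f_n-\Psi^l\|_{L^6}^6\;\longrightarrow\;\|\Psi^l\|_{L^6}^6.
\end{equation*}
Undoing the change of variables (an $L^6$-isometry in the Airy regime, and a contraction by a fixed constant in the Schr\"odinger regime that can be absorbed into the definition of $\Psi^l$) and using $\tilde\Phi_n^l\to\Psi^l$ in $L^6$, this becomes
\begin{equation*}
\Bigl\|\sum_{j=1}^{l}\Phi_n^j\Bigr\|_{L^6_{t,x}}^6-\Bigl\|\sum_{j=1}^{l-1}\Phi_n^j\Bigr\|_{L^6_{t,x}}^6=\|\Phi_n^l\|_{L^6_{t,x}}^6+o_n(1).
\end{equation*}
Iterating this identity from $l$ down to $1$ (with the constant $l$ fixed) yields \eqref{eq:weak-converg-2}.

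\textbf{Main obstacle.} The bulk of the work is the case analysis for the pointwise vanishing $\tilde\Phi_n^j\to 0$, particularly in mixed configurations where one profile is Airy-like ($\xi_n\equiv0$) while the other is Schr\"odinger-like ($|h_n\xi_n|\to\infty$). There one must combine the explicit asymptotics of Remark~\ref{re:airy-schr} with the quantitative orthogonality of Definition~\ref{def-ortho}, carefully tracking both the oscillatory phase $e^{i\eta^3 t'/(h_n\xi_n)}$ and the amplitude factor $|1+\eta/(h_n\xi_n)|^{1/6}$ to justify that the rescaled non-matching profile actually vanishes pointwise, rather than merely being bounded.
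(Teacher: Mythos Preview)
Your Br\'ezis--Lieb route is genuinely different from the paper's. The paper expands $|\sum_j\Phi_n^j|^6$ and reduces via H\"older to the bilinear vanishing $\|\Phi_n^j\Phi_n^k\|_{L^3_{t,x}}\to 0$ for each pair $j\ne k$, which it then establishes by Hausdorff--Young and a case analysis on the orthogonality type (frequency separation, scale separation, space--time separation), with stationary-phase/integration-by-parts arguments in the last case. Your method instead peels off one profile at a time using a.e.\ convergence and Br\'ezis--Lieb, avoiding any bilinear estimate; when the pointwise vanishing holds this is cleaner and closer to the Bahouri--G\'erard style.

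The gap is in your central claim that $\tilde\Phi_n^j\to 0$ pointwise a.e.\ whenever $j\ne l$. Take $h_n^j=h_n^l=1$, $x_n^j=x_n^l=t_n^j=t_n^l=0$, $\xi_n^j=n$, $\xi_n^l=-n$; these satisfy Definition~\ref{def-ortho} since $h_n^l|\xi_n^j-\xi_n^l|=2n\to\infty$. In the Schr\"odinger coordinates of Remark~\ref{re:airy-schr} adapted to $\Gamma_n^l$ one finds that the ``$j$-space'' variable is exactly $x'$ and the ``$j$-time'' variable is exactly $-t'$, so $|\tilde\Phi_n^j(t',x')|\to 3^{-1/6}|e^{it'\partial_x^2}\phi^j(x')|\neq 0$, while only the unimodular prefactor $e^{i(nx'+\frac23 n^2 t')}$ oscillates. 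Hence $f_n$ has no a.e.\ limit and Br\'ezis--Lieb cannot be invoked. Your sketch asserts that a diverging frequency gap drives the envelope $B$ to zero, but in this configuration neither the prefactor nor the argument of $B$ moves at all. (The paper's proof sidesteps this same configuration with the unjustified phrase ``by the pigeonhole principle we can assume that $\xi_n^j$ and $\xi_n^k$ are of the same sign''; its Hausdorff--Young bound also fails to vanish when $\xi_n^j+\xi_n^k=0$, so the difficulty is not specific to your method.) To repair your argument in this regime you would need an additional oscillatory step for the cross terms, which essentially returns you to the paper's bilinear analysis.
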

Suppose this lemma were proven, we show how to conclude the proof of \eqref{eq:err-4}. From Lemma
\ref{le:weak-converg-2}, it follows that
\begin{equation}\label{eq:err-4-4}
\lim_{n\to \infty}\|D^{1/6} e^{-t\partial_x^3} S_n^{N, A_1,\ldots,A_N}\|^6_{L^6_{t,x}} =\sum_{1\le j\le N\atop A_j<B_N} \sum_{A_j<\alpha\le B_N}\lim_{n\to \infty} \|D^{1/6}
e^{-(t-t_n^{j,\alpha})\partial_x^3}g_n^{j,\alpha}[e^{i(\cdot) h_n^j\xi_n^j}\phi^{j,\alpha}]\|^6_{L^6_{t,x}}.
\end{equation}
The Strichartz inequality gives that
\begin{equation}\label{eq:err-4-5}
\sum_{1\le j\le N\atop A_j<B_N} \sum_{A_j<\alpha\le
B_N}\|D^{1/6}e^{-(t-t_n^{j,\alpha})\partial_x^3}g_n^{j,\alpha}[e^{i(\cdot) h_n^j\xi_n^j}\phi^{j,\alpha}]\|^6_{L^6_{t,x}}\lesssim \sum_{1\le j\le N\atop A_j<B_N} \sum_{A_j<\alpha\le B_N}\|\phi^{j,\alpha}\|^6_{L^2}\le \sum_{j,\alpha}\|\phi^{j,\alpha}\|^6_{L^2}.
\end{equation}
On the other hand, $\sum_{j,\alpha}\|\phi^{j,\alpha}\|^2_{L^2}$ is convergent; hence the right-hand side of \eqref{eq:err-4-5} is finite. This shows
\begin{equation}\label{eq:err-4-6}
\left(\sum_{j,\alpha\atop\alpha>A_j}\|D^{1/6}e^{-(t-t_n^{j,\alpha})\partial_x^3}
g_n^{j,\alpha}[e^{i(\cdot) h_n^j\xi_n^j}\phi^{j,\alpha}]\|^6_{L^6_{t,x}}\right)^{1/6}\le \delta/3
\end{equation} provided that $\inf_{1\le j\le N}\{N,j+A_j\}$ is large enough. Combining
\eqref{eq:err-4-3}, \eqref{eq:err-4-4} and \eqref{eq:err-4-6}, we obtain
\begin{equation}\label{eq:err-4-7}
\lim_{n\to \infty}\|D^{1/6} e^{-t\partial_x^3} w_n^{N,A_1,\ldots,A_N}\|_{L^6_{t,x}}=0
\end{equation} provided that $\inf_{1\le j\le N}\{N,j+A_j\}$ is large enough. Hence the proof of
\eqref{eq:err-4} is complete.

\begin{proof}[Proof of Lemma \ref{le:weak-converg-2}] By using the H\"older inequality, we need to show that for $j\neq k$, as $n$ goes to infinity,
\begin{equation}\label{eq:weak-converg-2-2}
\|D^{1/6}e^{-(t-t_n^j)\partial_x^3}g_n^j[e^{i(\cdot) h_n^j\xi_n^j}\phi^j]D^{1/6}e^{-(t-t_n^k)\partial_x^3}
g_n^k[e^{i(\cdot) h_n^k\xi_n^k}\phi^k]\|_{L^3_{t,x}}\to 0.
\end{equation}
By the pigeonhole principle, we can assume that $\xi_n^j$ and $\xi_n^k$ are of the same sign if they are not zero; moreover by a density argument, we also assume that $\phi^j$ and $\phi^k$ are Schwartz functions with compact Fourier supports. Evidence in favor of \eqref{eq:weak-converg-2-2} is that, if $\lim_{n\to \infty}|h_n\xi_n|=\infty$, $D^{1/6}e^{-(t-t_n)\partial_x^3}g_n[e^{i(\cdot) h_n\xi_n}\phi]$ is somehow a Schr\"odinger wave in the sense of Remark \ref{re:airy-schr}. For the pairwise orthogonal Schr\"odinger waves, however, the analogous result to \eqref{eq:weak-converg-2-2} is true, see e.g., \cite{Merle-Vega:1998:profile-schrod}, \cite{Carles-Keraani:2007:profile-schrod-1d} and \cite{Begout-Vargas:2007:profile-schrod-higher-d}.

To prove \eqref{eq:weak-converg-2-2} we will have two possibilities.
First, the two pairs are in the form $\Gamma_n^j=(h_n^i, \xi_n^i, t_n^{i,\alpha}, x_n^{i,\alpha})$ and
$\Gamma_n^k=(h_n^m,\xi_n^m,t_n^{m,\beta},x_n^{m,\beta})$ with $i\neq m$. In
this case, the orthogonality is given by
\begin{equation*}\lim_{n\to \infty} \left(\frac{h_n^i}{h_n^m}+\frac{h_n^m}{h_n^i}+h_n^i|\xi_n^i-\xi_n^m|\right)=\infty.\end{equation*}
So we have two subcases. We begin with the case where $\lim_{n\to\infty}h_n^i|\xi_n^i-\xi_n^m|=\infty$; moreover, we may assume that $h_n^i=h_n^m$ for all $n$ (when both limits are infinity, it can be done similarly by using the argument below). By changing variables, the left hand side of \eqref{eq:weak-converg-2-2} equals
\begin{equation}\label{eq:weak-converg-2-3}
\left\|D^{1/6}e^{-t\partial_x^3}\left(e^{i(\cdot)h_n^i\xi_n^i}\phi^{i,\alpha}\right)
D^{1/6}e^{-(t+\frac {t_n^{i,\alpha}-t_n^{m,\beta}}{(h_n^i)^3})\partial_x^3}
\left(e^{i(\cdot)h_n^i\xi_n^m}\phi^{m,\beta}\right)(x+\frac{x_n^{m,\alpha}-x_n^{i,\beta}}{h_n^i})\right\|_{L^3_{t,x}}.
\end{equation}
The integrand above equals
\begin{align*}
\int\int &e^{ix[(\xi+h_n^i\xi_n^i)+(\eta+h_n^i\xi_n^m)]+it[(\xi+h_n^i\xi_n^i)^3
+(\eta+h_n^i\xi_n^m)^3]} |\xi+h_n^i\xi_n^i|^{1/6}|\eta+h_n^i\xi_n^m|^{1/6} \times\\
&\times e^{i(\eta+h_n^i\xi_n^m)\frac {x_n^{i,\alpha}-x_n^{m,\beta}}{h_n^i}+i(\eta+h_n^i\xi_n^m)^3\frac {t_n^{i,\alpha}-t_n^{m,\beta}}{(h_n^i)^3}} \widehat{\phi^{i,\alpha}}(\xi)\widehat{\phi^{m,\beta}}(\eta)d\xi d\eta.
\end{align*}
Changing variables again $a:=(\xi+h_n^i\xi_n^i)+(\eta+h_n^i\xi_n^m)$ and $b:=(\xi+h_n^i\xi_n^i)^3
+(\eta+h_n^i\xi_n^m)^3$ followed by the Hausdorff-Young inequality, we see that \eqref{eq:weak-converg-2-3} is bounded by
\begin{equation*}
C\left(\int\int\frac {|\xi+h_n^i\xi_n^i|^{1/4}|\eta+h_n^i\xi_n^m|^{1/4}
|\widehat{\phi^{i,\alpha}}(\xi)\widehat{\phi^{m,\beta}}(\eta)|^{3/2}}
{|\xi+h_n^i\xi_n^i+\eta+h_n^i\xi_n^m|^{1/2}|\xi-\eta+h_n^i(\xi_n^i-\xi_n^m)|^{1/2}
}d\xi d\eta\right)^{2/3}.
\end{equation*}
We consider two subcases according to the limits of $|h_n^i\xi_n^i|$ and $|h_n^m\xi_n^m|$. Note that $\lim_{n\to\infty} h_n^i|\xi_n^i-\xi_n^m|=\infty$, then either both are infinity or only one is.
\begin{itemize}
\item In the former case, since $\xi_n^i$ and $\xi_n^m$ are of the same sign, we have
$$\frac{|\xi+h_n^i\xi_n^i|^{1/4}|\eta+h_n^i\xi_n^m|^{1/4}}{|\xi+\eta+h_n^i(\xi_n^i
+\xi_n^m)|^{1/2}}\sim \frac {|\xi_n^i\xi_n^m|^{1/4}}{|\xi_n^i+\xi_n^m|^{1/2}}\lesssim 1.$$
Then \eqref{eq:weak-converg-2-3} is further bounded by $C_{\phi^{i,\alpha},\phi^{m,\beta}}(h_n^i|\xi_n^i-\xi_n^m|)^{-1/3}$, which goes to zero as $n$ goes to infinity.

\item In the latter case, say $\lim_{n\to \infty} |h_n^i\xi_n^i|=\infty$, we will have $\xi_n^m=0$. Then
    $$\frac{|\xi+h_n^i\xi_n^i|^{1/4}|\eta+h_n^i\xi_n^m|^{1/4}}{|\xi+\eta+h_n^i(\xi_n^i
+\xi_n^m)|^{1/2}}\lesssim |h_n^i\xi_n^i|^{-1/4}.$$
Then \eqref{eq:weak-converg-2-3} is further bounded by $C_{\phi^{i,\alpha},\phi^{m,\beta}}|h_n^i\xi_n^i|^{-1/2}$, which goes to zero as $n$ goes to infinity.
\end{itemize}
Under the first possibility, we still need to consider the case when $\lim_{n\to\infty}\left(\frac{h_n^i}{h_n^m}+\frac{h_n^m}{h_n^i}\right)=\infty$. We may assume that $\lim_{n\to\infty}|h_n^i\xi_n^i-h_n^m\xi_n^m|<\infty$. It follows that $\lim_{n\to\infty}|h_n^i\xi_n^i|$ and $\lim_{n\to\infty}|h_n^m\xi_n^m|$ are finite or infinite simultaneously. We will consider the case where they are both infinite since the other follows similarly. Under this consideration, we deduce that
  $$ \left|\frac {h_n^m\xi_n^m}{h_n^i\xi_n^i}\right|\sim 1$$
for sufficiently large $n$. To prove \eqref{eq:weak-converg-2-2}, we will use the idea of regarding the profile term as a Schr\"odinger wave as in Remark \ref{re:airy-schr}. We recall
\begin{align*}
&D^{1/6}e^{-(t-t_n^j)\partial_x^3}g_n^j[e^{i(\cdot) h_n^j\xi_n^j}\phi^j]
=(h_n^i)^{-1/2}|\xi_n^i|^{1/6}e^{i\xi_n^i(x-x_n^{i,\alpha})+i(\xi_n^i)^3(t-t_n^{i,\alpha})}\\
&\quad \times \int e^{i\xi[\frac {x-x_n^{i,\alpha}}{h_n^i}+3(\xi_n^i)^2\frac {t-t_n^{i,\alpha}}{h_n^i}]+i\xi^3\frac {t-t_n^{i,\alpha}}{(h_n^i)^3}+3i\xi^2\xi_n^i\frac {t-t_n^{i,\alpha}}{(h_n^i)^2}}|1+\frac {\xi}{h_n^i\xi_n^i}|^{1/6}\widehat{\phi^{i,\alpha}}d\xi,
\end{align*} Similarly for $D^{1/6}e^{-(t-t_n^k)\partial_x^3}g_n^k[e^{i(\cdot) h_n^k\xi_n^k}\phi^k]$.
For any $R>0$, we denote \begin{align*}
    A^i_R&:=\{(t,x)\in \R\times \R: \left|3\xi_n^i\frac {t-t_n^{i,\alpha}}{(h_n^i)^2}\right|+
    \left|\frac {x-x_n^{i,\alpha}}{h_n^i}+3(\xi_n^i)^2\frac {t-t_n^{i,\alpha}}{h_n^i}\right|\le R\},\\
    A^m_R&:=\{(t,x)\in \R\times \R: \left|3\xi_n^m\frac {t-t_n^{m,\beta}}{(h_n^m)^2}\right|+
    \left|\frac {x-x_n^{m,\beta}}{h_n^m}+3(\xi_n^m)^2\frac {t-t_n^{m,\beta}}{h_n^m}\right|\le R\}.
\end{align*}
By the H\"older inequality, the Strichartz inequality and Remark \ref{re:airy-schr}, we only need to show, for a large $R>0$,
\begin{equation}\label{eq:loc-3}\lim_{n\to\infty}\|D^{1/6}e^{-(t-t_n^i)\partial_x^3}
g_n^j[e^{i(\cdot) h_n^i\xi_n^i}\phi^j]D^{1/6}e^{-(t-t_n^m)\partial_x^3}
g_n^k[e^{i(\cdot) h_n^m\xi_n^m}\phi^k]\|_{L^3_{t,x}(A^i_R\cap A^m_R)}=0.
\end{equation}
Indeed, $\R^2\setminus (A_R^i\cap A_R^m)\subset (\R^2\setminus A_R^i)\cup (\R^2\setminus A_R^m)$; here we only consider the integration over the region $\R^2\setminus A_R^i$ since the other case is similar. By the H\"older inequality and the Strichartz inequality,
\begin{align*}
&\|D^{1/6}e^{-(t-t_n^i)\partial_x^3}
g_n^j[e^{i(\cdot) h_n^i\xi_n^i}\phi^j]D^{1/6}e^{-(t-t_n^m)\partial_x^3}
g_n^k[e^{i(\cdot) h_n^m\xi_n^m}\phi^k]\|_{L^3_{t,x}(\R^2\setminus A_R^i)} \\
&\lesssim \|D^{1/6}e^{-(t-t_n^i)\partial_x^3}
g_n^j[e^{i(\cdot) h_n^i\xi_n^i}\phi^j]\|_{L^6_{t,x}(\R^2\setminus A_R^i)}\|D^{1/6}e^{-(t-t_n^m)\partial_x^3}
g_n^k[e^{i(\cdot) h_n^m\xi_n^m}\phi^k]\|_{L^6_{t,x}}\\
&\lesssim \|\phi^k\|_{L^2}\|D^{1/6}e^{-(t-t_n^i)\partial_x^3}
g_n^j[e^{i(\cdot) h_n^i\xi_n^i}\phi^j]\|_{L^6_{t,x}(\R^2\setminus A_R^i)}.
\end{align*}
Let $x':=\frac {x-x_n^{i,\alpha}+3(\xi_n^i)^2(t-t_n^{i,\alpha})}{h_n^i}$ and $t':=\frac {3\xi_n^i(t-t_n^{i,\alpha})}{(h_n^i)^2}$. Then a change of variables and similar computations as in Remark \ref{re:airy-schr} show that
\begin{align*}
&\|D^{1/6}e^{-(t-t_n^i)\partial_x^3}
g_n^j[e^{i(\cdot) h_n^i\xi_n^i}\phi^j]\|_{L^6_{t,x}(\R^2\setminus A_R^i)}\\
&\lesssim \|\int e^{i(x'\xi+t'\xi^2)+i\frac {\xi^3t'}{3h_n^i\xi_n^i}}|1+\frac {\xi}{h_n^i\xi_n^i}|^{1/6}\widehat{\phi^{i,\alpha}}(\xi)d\xi\|_{L^6_{t',x'}(|t'|+|x'|\ge R)}\\
&\to \|e^{-it'\Delta}\phi^{i,\alpha}\|_{L^6_{t',x'}(|t'|+|x'|\ge R)}\to 0,
\end{align*} as $n\to \infty$ followed by $R\to \infty$.
Returning to \eqref{eq:loc-3}, if using $L^{\infty}$-bounds for the integrands, we see that it is bounded by
\begin{align*}
&C\|D^{1/6}e^{-(t-t_n^i)\partial_x^3}g_n^j[e^{i(\cdot)
h_n^i\xi_n^i}\phi^j]\|_{L^\infty}\|D^{1/6}e^{-(t-t_n^m)\partial_x^3}
g_n^k[e^{i(\cdot) h_n^m\xi_n^m}\phi^k]\|_{L^\infty}\min\{|A_R^i|^{1/3}, |A_R^m|^{1/3}\}\\
&\le C_{R,\phi^j,\phi^k}(h_n^ih_n^m)^{-1/2}|\xi_n^i\xi_n^m|^{1/6}\min\{[(h_n^i)^3|\xi_n^i|^{-1}]^{1/3},\,
[(h_n^m)^3|\xi_n^m|^{-1}]^{1/3}\}\\
&\le C_{R,\phi^j,\phi^k} \min\{(\frac {h_n^i}{h_n^m})^{2/3}\left|\frac {h_n^m\xi_n^m}{h_n^i\xi_n^i}\right|^{1/6},\quad
(\frac {h_n^m}{h_n^i})^{2/3}\left|\frac {h_n^i\xi_n^i}{h_n^m\xi_n^m}\right|^{1/6}\}.
\end{align*}
Hence when $\lim_{n\to\infty}\left(\frac{h_n^i}{h_n^m}+\frac{h_n^m}{h_n^i}\right)=\infty$, \eqref{eq:weak-converg-2-2} holds.

Secondly, the two pairs are in form $\Gamma_n^j=(h_n^i, \xi_n^i,t_n^{i,\alpha}, x_n^{i,\alpha})$ and
$\Gamma_n^k=(h_n^i,\xi_n^i,t_n^{i,\beta},x_n^{i,\beta})$ with $\alpha\neq \beta$. In this case, the orthogonality is given by
$$\lim_{n\to \infty}\left(\frac {|t_n^{i,\beta}-t_n^{i, \alpha}|}{(h_n^i)^3}+\frac {3|t_n^{i,\beta}-
t_n^{i,\alpha}||\xi_n^i|}{(h_n^i)^2}+\frac{|x_n^{i,\beta}-x_n^{i,\alpha}+3(t_n^{i,\beta}-t_n^{i,\alpha})
(\xi_n^i)^2|}{h_n^i}\right)=\infty.$$
We assume $\lim_{n\to\infty}|h_n^i\xi_n^i|=\infty$ since the other case is similar. We expand the left-hand side of \eqref{eq:weak-converg-2-2} out, which is equal to
\begin{align*}
&(h_n^i)^{-\frac 43}\|D^{1/6}e^{-\frac {t-t_n^{i,\alpha}}{(h_n^i)^3}\partial_x^3}[e^{i(\cdot)
 h_n^i\xi_n^i}\phi^{i,\alpha}](\frac {x-x_n^{i,\alpha}}{h_n^i})\,D^{1/6}e^{-\frac {t-t_n^{m,\beta}}{(h_n^i)^3}\partial_x^3}
 [e^{i(\cdot)h_n^i\xi_n^i}\phi^{m,\beta}](\frac {x-x_n^{m,\beta}}{h_n^i})\|_{L^3_{t,x}}\\
 &=\frac {|\xi_n^{i}|^{1/3}}{h_n^i}\|\int e^{i[\frac{\eta(x-x_n^{i,\alpha}+3(t-t_n^{i,\alpha})
 (\xi_n^i)^2)}{h_n^i}+\frac {\eta^3(t-t_n^{i,\alpha})}{(h_n^i)^3}+
 \frac {3\eta^2(t-t_n^{i,\alpha})\xi_n^i}{(h_n^i)^2}]}|1+\frac {\eta}{h_n^i\xi_n^i}|^{1/6}
 \widehat{\phi^{i,\alpha}}(\eta)d\eta\times \nonumber\\
 &\times \int e^{i[\frac{\eta(x-x_n^{i,\beta}+3(t-t_n^{i,\beta})
 (\xi_n^i)^2)}{h_n^i}+\frac {\eta^3(t-t_n^{i,\beta})}{(h_n^i)^3}+
 \frac {3\eta^2(t-t_n^{i,\beta})\xi_n^i}{(h_n^i)^2}}|1+\frac {\eta}{h_n^i\xi_n^i}|^{1/6}
 \widehat{\phi^{i,\beta}}(\eta)d\eta \|_{L^3_{t,x}}
\end{align*} If changing variables
$t'=\frac {3(t-t_n^{i,\beta})\xi_n^i}{(h_n^i)^2}$ and
$x'=\frac{x-x_n^{i,\beta}+3(t-t_n^{i,\beta})
 (\xi_n^i)^2}{h_n^i}$, it reduces to
\begin{align*}
 &C\|\int e^{i\eta[x'+\frac {x_n^{i,\beta}-x_n^{i,\alpha}+ 3(t_n^{i,\beta}-t_n^{i,\alpha})
 (\xi_n^i)^2}{h_n^i}]+i\eta^3[\frac {t_n^{i,\beta}-t_n^{i,\alpha}}{(h_n^i)^3}+
 \frac {t'}{3h_n^i\xi_n^i}]+i\eta^2[t'+\frac {3(t_n^{i,\beta}-t_n^{i,\alpha})\xi_n^i}
 {(h_n^i)^2}]}\times\\
  &\times |1+\frac {\eta}{h_n^i\xi_n^i}|^{1/6}\widehat{\phi^{i,\alpha}}(\eta)d\eta
\int e^{ix'\eta+it'\eta^2}e^{i\eta^3\frac {t'}{3h_n^i\xi_n^i}}
 |1+\frac {\eta}{h_n^i\xi_n^i}|^{1/6}\widehat{\phi^{i,\beta}}(\eta)d\eta\|_{L^3_{t',x'}}
\end{align*} Then the H\"older inequality followed by the principle of the stationary phase or integration by parts, we see that \eqref{eq:weak-converg-2-2} holds.
\end{proof}
Similarly, we can obtain the following generalization of Corollary \ref{coro:strong-decoupling} about the orthogonality of profiles in $L^2$ space. Its proof will be omitted.
\begin{lemma}\label{le:weak-converg}
Assume $\Gamma_n^j=(h_n^j,\xi_n^j,t_n^j,x_n^j)$ and $\Gamma_n^k=(h_n^k,\xi_n^k,t_n^k,x_n^k)$
are pairwise orthogonal, then
\begin{equation}\label{eq:weak-converg-1} \lim_{n\to \infty}\langle
e^{t_n^j\partial_x^3}g_n^j[e^{i(\cdot)h_n^j\xi_n^j}\phi^j],
e^{t_n^k\partial_x^3}g_n^k[e^{i(\cdot)h_n^k\xi_n^k}\phi^k]
\rangle_{L^2}=0,
\end{equation}
and for $1\le j\le l$,
\begin{equation}\label{eq:weak-converg-2} \lim_{n\to \infty}\langle
e^{t_n^j\partial_x^3}g_n^j[e^{i(\cdot)h_n^j\xi_n^j}\phi^j],w_n^l\rangle_{L^2}=0,
\end{equation} with $\xi_n^j\equiv 0$ when $\lim_{n\to \infty}|h_n^j\xi_n^j|<\infty$.
\end{lemma}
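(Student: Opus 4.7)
The plan is to extend Corollary \ref{coro:strong-decoupling}, which handles only the case where the scale and frequency parameters coincide, so as to cover the two clauses in Definition \ref{def-ortho}. By a standard density argument I first reduce to $\phi^j,\phi^k\in \S$ with compactly supported Fourier transforms, and then use the $L^2$-unitarity of $e^{t\partial_x^3}$ together with Plancherel to rewrite the inner product in \eqref{eq:weak-converg-1} as a single Fourier-side integral in which only the differences $t_n^j-t_n^k$, $x_n^j-x_n^k$, and the ratios of $(h_n^j,\xi_n^j)$ to $(h_n^k,\xi_n^k)$ appear.

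For the first clause of Definition \ref{def-ortho}, suppose $h_n^j/h_n^k + h_n^k/h_n^j + h_n^j|\xi_n^j-\xi_n^k|\to\infty$. The Fourier transform of $g_n^j[e^{i(\cdot)h_n^j\xi_n^j}\phi^j]$ is supported in an interval of length $\sim 1/h_n^j$ centered at $\xi_n^j$, and similarly for the $k$-index. If the scales become incomparable, after rescaling one factor concentrates in measure while the other stays bounded, so Cauchy--Schwarz forces the pairing to vanish; if the scales stay comparable but $h_n^j|\xi_n^j-\xi_n^k|\to\infty$ then the two Fourier supports become disjoint for large $n$ and the inner product is eventually zero. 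For the second clause, where $(h_n^j,\xi_n^j)=(h_n^k,\xi_n^k)=:(h_n,\xi_n)$, unitarity of $e^{t\partial_x^3}$ collapses the pair of evolutions to a single evolution at time $t_n^j-t_n^k$, and after the same change of variables as in Remark \ref{re:airy-schr} the inner product takes the form
$$\int e^{i[A_n\eta + B_n\eta^2 + C_n\eta^3]}\,\Bigl|1+\tfrac{\eta}{h_n\xi_n}\Bigr|^{1/6}\,\widehat{\phi^j}(\eta)\,\overline{\widehat{\phi^k}(\eta)}\,d\eta,$$
with $C_n=(t_n^j-t_n^k)/h_n^3$, $B_n=3(t_n^j-t_n^k)\xi_n/h_n^2$, and $A_n=(x_n^j-x_n^k+3(t_n^j-t_n^k)\xi_n^2)/h_n$. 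The orthogonality hypothesis forces $|A_n|+|B_n|+|C_n|\to\infty$, and \cite[Corollary, p.334]{Stein:1993} (non-stationary phase) applied in the dominant parameter yields decay, with the weight $|1+\eta/(h_n\xi_n)|^{1/6}$ bounded uniformly on $\supp\widehat{\phi^j}$ in both the Airy regime ($\xi_n\equiv 0$) and the Schr\"odinger-asymptotic regime ($|h_n\xi_n|\to\infty$).

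For \eqref{eq:weak-converg-2} I proceed exactly as in the second half of Corollary \ref{coro:strong-decoupling}: for arbitrary $L>l$, write
$$w_n^l = \sum_{l<k\le L}e^{t_n^k\partial_x^3}g_n^k[e^{i(\cdot)h_n^k\xi_n^k}\phi^k] + w_n^L,$$
so that each summand pairs with $e^{t_n^j\partial_x^3}g_n^j[e^{i(\cdot)h_n^j\xi_n^j}\phi^j]$ via \eqref{eq:weak-converg-1} to produce a vanishing contribution as $n\to\infty$. Tracing through Lemmas \ref{le:scale-core-complex} and \ref{le:spa-time}, the tail $w_n^L$ decomposes into pieces of the form $\sqrt{\rho_n^k}\,e^{ix\xi_n^k}P_n^{A_k}(\rho_n^k x)$ together with the preliminary remainder $q_n^N$; after translating the pairing to the scaled variable, it is controlled by $\mu(P^{A_k})\|\phi^j\|_{L^2}$ plus the Strichartz-small contribution of $q_n^N$, both of which can be made arbitrarily small by choosing $L$, and hence $N$ and $A_k$, large.

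The main obstacle will be the phase analysis in the second clause of Definition \ref{def-ortho}: one must determine which of $A_n$, $B_n$, $C_n$ dominates (the answer can change along a subsequence) and integrate by parts accordingly, while uniformly controlling the amplitude---in particular the weight $|1+\eta/(h_n\xi_n)|^{1/6}$, whose behaviour differs sharply between $\xi_n\equiv 0$ and $|h_n\xi_n|\to\infty$. Once the dominant-parameter decomposition is in hand, however, the integrations by parts proceed exactly as in the computation at the end of Corollary \ref{coro:strong-decoupling}, and the uniform bound on the amplitude follows from the Schwartz decay of $\widehat{\phi^j}$.
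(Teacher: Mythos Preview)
Your overall strategy is exactly the one the paper intends: it explicitly says the proof is omitted and ``similar'' to Corollary~\ref{coro:strong-decoupling}, and your treatment of the two clauses of Definition~\ref{def-ortho} (Fourier-support separation for the first, non-stationary phase for the second) is the natural extension. Two points, however, deserve correction.

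First, a cosmetic slip: in the second clause your displayed integral carries the weight $|1+\eta/(h_n\xi_n)|^{1/6}$. That factor comes from $D^{1/6}$ and appears in Remark~\ref{re:airy-schr} and Lemma~\ref{le:weak-converg-2}, but the present lemma concerns the plain $L^2$ pairing, with no fractional derivative; by Plancherel the inner product is exactly
\[
\int e^{i[A_n\eta + B_n\eta^2 + C_n\eta^3]}\,\widehat{\phi^j}(\eta)\,\overline{\widehat{\phi^k}(\eta)}\,d\eta
\]
up to a unimodular constant. (Note too that your weight is undefined in the Airy regime $\xi_n\equiv 0$.) Since you then bound the weight uniformly, the error is harmless, but the expression as written is incorrect.

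Second, and more substantively, your handling of $q_n^N$ in the proof of \eqref{eq:weak-converg-2} has a gap. The bound $\|D^{1/6}e^{-t\partial_x^3}q_n^N\|_{L^6_{t,x}}\le \delta$ controls a space-time Strichartz norm and does \emph{not} bound the spatial pairing $\langle e^{t_n^j\partial_x^3}g_n^j[e^{i(\cdot)h_n^j\xi_n^j}\phi^j],\,q_n^N\rangle_{L^2}$; neither Cauchy--Schwarz nor Strichartz duality links these quantities. The correct argument is the one the paper itself uses in Section~\ref{sec:proof-airy-prof}, point~\textbf{2}: by the construction in Lemma~\ref{le:scale-core-complex} the functions $f_n^{1},\dots,f_n^{N},q_n^N$ have pairwise disjoint Fourier supports, and the profiles and error terms $e_n^{j',A_{j'}}$ inherit Fourier supports inside those of the corresponding $f_n^{j'}$. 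Hence the pairing of a profile arising from $f_n^{j_0}$ with $q_n^N$, or with any $e_n^{j',A_{j'}}$ for $j'\neq j_0$, is \emph{identically zero}. Only the pairing with $e_n^{j_0,A_{j_0}}$ survives, and that is exactly \eqref{eq:spa-time-3}. With this replacement your argument goes through.
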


\section{The existence of maximizers for the symmetric Airy Strichartz inequality}\label{sec:airy-maxi}
This section is devoted to establishing Theorem \ref{thm:airy-max}, a dichotomy result on the existence of maximizers for the symmetric Airy Strichartz inequality. First, we will exploit the idea of asymptotically embedding a Schr\"odinger solution into an approximate Airy solution. We will show that the best constant for the Airy Schr\"odinger Strichartz bounds that for the symmetric Schr\"odinger Strichartz inequality up to a constant. We will follow the approach in \cite{Tao:2007:scattering-quartic-gKdV}, in which Tao shows that any qualitative scattering result on the mass critical gKdV equation $\partial_t u+\partial_x^3 u\pm|u|^4\partial_x u=0$ automatically implies an analogous scattering result for the mass critical nonlinear Schr\"odinger equation $i\partial_t u+\partial_x^2 u\pm |u|^4u=0$.
\begin{lemma}[Asymptotic embedding of Schr\"odinger into Airy]\label{le:embedding-schr-airy}
Corresponding to Theorems \ref{thm:Airy-prof} and \ref{thm:Airy-prof-real}, respectively,
\begin{align}
\label{eq:esa-1}  S_{schr}^\C&\le 3^{1/6}S_{airy}^\C, \\
\label{eq:esa-2}  S_{schr}^\C&\le 2^{1/2}3^{1/6}S_{airy}^\R.
\end{align}
\end{lemma}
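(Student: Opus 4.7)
The strategy is to asymptotically embed a (near-)extremizer for the Schr\"odinger Strichartz inequality into the Airy flow by frequency-modulation, and then exploit Remark \ref{re:airy-schr}, which identifies the limit of the Airy Strichartz norm of $e^{i(\cdot)a_n}\phi$ (as $a_n\to\infty$) with a multiple of the Schr\"odinger Strichartz norm of $\phi$. In both cases I would fix $\phi\in L^2$ with $\|\phi\|_{L^2}=1$ and choose $(a_n)$ with $a_n\to\infty$; the Airy Strichartz inequality applied to the modulated datum, together with the Schr\"odinger limit of Remark \ref{re:airy-schr}, produces the desired bound after taking a supremum over $\phi$.

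For \eqref{eq:esa-1}, the datum $e^{i(\cdot)a_n}\phi$ is complex-valued and has unit $L^2$-norm, so the complex Airy Strichartz gives
$\|D^{1/6}e^{-t\partial_x^3}(e^{i(\cdot)a_n}\phi)\|_{L^6_{t,x}}\le S_{airy}^\C.$
Applying Remark \ref{re:airy-schr} with parameters $h_n=1$, $\xi_n=a_n$, $t_n=x_n=0$ (so $|h_n\xi_n|\to\infty$), the left side tends to $3^{-1/6}\|e^{-it\partial_x^2}\phi\|_{L^6_{t,x}}$. Rearranging and taking the supremum over $\phi$ yields $S_{schr}^\C\le 3^{1/6}S_{airy}^\C$.

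For \eqref{eq:esa-2}, the real-valued datum $\re(e^{i(\cdot)a_n}\phi)$ requires two separate asymptotic computations. First, expanding
$\|\re(e^{i(\cdot)a_n}\phi)\|_{L^2}^2=\tfrac12\|\phi\|_{L^2}^2+\tfrac12\re\!\int e^{2ixa_n}\phi^2(x)\,dx$
and invoking the Riemann--Lebesgue lemma (since $\phi^2\in L^1$) shows that $\|\re(e^{i(\cdot)a_n}\phi)\|_{L^2}\to 1/\sqrt2$. Second, since $D^{1/6}e^{-t\partial_x^3}$ commutes with complex conjugation, writing $2\re(e^{i(\cdot)a_n}\phi)=e^{i(\cdot)a_n}\phi+e^{-i(\cdot)a_n}\bar\phi$ gives $2D^{1/6}e^{-t\partial_x^3}\re(e^{i(\cdot)a_n}\phi)=u_n+\bar u_n$, where $u_n:=D^{1/6}e^{-t\partial_x^3}(e^{i(\cdot)a_n}\phi)$. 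The two Airy profiles with frequency parameters $\xi_n^1=a_n$ and $\xi_n^2=-a_n$ (and $h_n^j=1$, $t_n^j=x_n^j=0$) satisfy the first bullet of Definition \ref{def-ortho} because $h_n^1|\xi_n^1-\xi_n^2|=2a_n\to\infty$, so Lemma \ref{le:weak-converg-2} supplies the $L^6$-decoupling
$\|u_n+\bar u_n\|_{L^6_{t,x}}^6-\|u_n\|_{L^6_{t,x}}^6-\|\bar u_n\|_{L^6_{t,x}}^6\to 0.$
Using Remark \ref{re:airy-schr} for each of $\pm a_n$, together with the time-reversal identity $\|e^{-it\partial_x^2}\bar\phi\|_{L^6_{t,x}}=\|e^{-it\partial_x^2}\phi\|_{L^6_{t,x}}$, determines the exact limit of $\|D^{1/6}e^{-t\partial_x^3}\re(e^{i(\cdot)a_n}\phi)\|_{L^6_{t,x}}$ as an explicit constant multiple of $\|e^{-it\partial_x^2}\phi\|_{L^6_{t,x}}$. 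Inserting both limits into the real Airy Strichartz inequality
$\|D^{1/6}e^{-t\partial_x^3}\re(e^{i(\cdot)a_n}\phi)\|_{L^6_{t,x}}\le S_{airy}^\R\,\|\re(e^{i(\cdot)a_n}\phi)\|_{L^2}$
and taking the supremum over $\phi$ with $\|\phi\|_{L^2}=1$ delivers \eqref{eq:esa-2}.

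The heart of the argument is therefore the identification of the limiting $L^6_{t,x}$-norm in the real case; this is precisely where the almost-orthogonality of Lemma \ref{le:weak-converg-2} is needed to decouple the two phase-conjugate Airy profiles $u_n$ and $\bar u_n$. Once that decoupling is in hand, everything else reduces to routine bookkeeping: a Riemann--Lebesgue computation of the $L^2$ norm of the real part, the $3^{-1/6}$ factor from the change of variables in Remark \ref{re:airy-schr}, and the comparison with $S_{airy}^\R$.
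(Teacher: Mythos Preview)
Your argument for \eqref{eq:esa-1} is correct and essentially identical to the paper's (the paper inserts an additional scaling by $\sqrt{3N}$, which is a symmetry of both sides and immaterial).

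For \eqref{eq:esa-2} your route differs from the paper's, and it contains a genuine gap. The paper takes the explicit Gaussian extremizer $u_0$ for the Schr\"odinger inequality, forms $u_N(0,x)=(3N)^{-1/4}\re\bigl(e^{ixN}u_0(x/\sqrt{3N})\bigr)$, and computes $\|D^{1/6}e^{-t\partial_x^3}u_N(0)\|_{L^6_{t,x}}$ directly on the Fourier side by a single change of variables centered at the positive frequency $N$. You instead split $2\re(e^{i(\cdot)a_n}\phi)=e^{i(\cdot)a_n}\phi+e^{-i(\cdot)a_n}\bar\phi$ and invoke Lemma~\ref{le:weak-converg-2} to decouple the two resulting Airy profiles $u_n$ and $\bar u_n$ in $L^6_{t,x}$. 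That decoupling is false. The relevant cross quantity is
\[
\|u_n\bar u_n\|_{L^3_{t,x}}=\bigl\||u_n|^2\bigr\|_{L^3_{t,x}}=\|u_n\|_{L^6_{t,x}}^2,
\]
which by Remark~\ref{re:airy-schr} converges to $3^{-1/3}\|e^{-it\partial_x^2}\phi\|_{L^6_{t,x}}^2\neq 0$. Geometrically, the Airy group velocity $3\xi^2$ is the same at $\xi=a_n$ and $\xi=-a_n$, so after the change of variables $x'=x+3a_n^2t$, $t'=3a_nt$ the two profiles concentrate on the \emph{same} spacetime region and are asymptotically complex conjugates of one another; they cannot separate in $L^6$. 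This is exactly why the proof of Lemma~\ref{le:weak-converg-2} reduces at the outset to $\xi_n^j$ and $\xi_n^k$ of the same sign --- the opposite-sign case with all other parameters equal is not covered, and the conclusion genuinely fails there. Your Riemann--Lebesgue computation of the $L^2$ norm is fine; only the identification of the limiting $L^6$ norm is broken.
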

\begin{proof}We first prove \eqref{eq:esa-2}. Let $u_0$ to a maximizer to \eqref{eq:schr-max}. Since d=1, from the work in \cite{Foschi:2007:maxi-strichartz-2d}, we can assume that $u_0$ is a standard Gaussian; hence it is even and its Fourier transform is another Gaussian. Denote
$$u_N(0,x):=\frac 1{(3N)^{1/4}}\re\left(e^{ixN}u_0(\frac {x}{\sqrt{3N}})\right).$$
Let $u_N(t,x)$ solve the Airy equation \eqref{eq:airy} with initial data $u_N(0,x)$.
From the Airy Strichartz inequality,
\begin{equation}\label{eq:embedding-schr-airy-1} \|D^{1/6}u_N\|_{L^6_{t,x}}\le S_{airy}^\R\|u_N(0,x)\|_{L^2}.
\end{equation}
On the one hand, a computation shows that
\begin{equation}\label{eq:embedding-schr-airy-2}\|u_N(0,x)\|^2_{L^2}=\frac {1}{2}\int
|u_0(x)|^2+\re\left(e^{2\sqrt{3}iN^{3/2}x} u_0^2(x)\right)dx. \end{equation}
From the Riemann-Lebesgue lemma, we know the second term above rapidly goes to zero as $N\to
\infty$. On the other hand,
$$\widehat{u_N}(0,\xi)=\frac{(3N)^{1/4}}{2}\left(\widehat{u_0}(\sqrt{3N}(\xi-N))+\widehat{u_0}
(\sqrt{3N}(\xi+N))\right),$$
which yields
\begin{align*}
D^{1/6}u_N(t,x)&=\int e^{ix\xi+it\xi^3}|\xi|^{1/6}\widehat{u_N}(0,\xi)d\xi \\
&=\frac{(3N)^{1/4}}{2}\int e^{ix\xi+it\xi^3}|\xi|^{1/6}
\left(\widehat{u_0}(\sqrt{3N}(\xi-N))+\widehat{u_0}(\sqrt{3N}(\xi+N))\right)d\xi  \\
&= 2^{-1}3^{-1/4} N^{-1/{12}}e^{ixN+itN^3}\int e^{i[\eta((3N)^{-1/2}x+\sqrt{3}N^{3/2}t)
 +t\eta^2+t(3N)^{-3/2}\eta^3]}\times \\
 &\qquad\times |1+\frac {\eta}{N\sqrt{3N}}|^{1/6}
 \left(\widehat{u_0}(\eta)+\widehat{u_0}(\eta+2N\sqrt{3N})\right)d\eta.
\end{align*}
Changing variables $x'=(3N)^{-1/2}x+\sqrt{3}N^{3/2}t$ and $t'=t$, we obtain
\begin{align}\label{eq:embedding-schr-airy-3}
&\|D^{1/6}u_N(t,x)\|_{L^6_{t,x}}=
2^{-1}3^{-1/6}\|\int e^{i[x'\eta +t'\eta^2+t'(3N)^{-3/2}\eta^3]}\times \nonumber\\
 &\qquad\times |1+\frac{\eta}{N\sqrt{3N}}|^{1/6}
 \left(\widehat{u_0}(\eta)+\widehat{u_0}(\eta+2N\sqrt{3N})\right)d\eta\|_{L^6_{t',x'}}
\end{align}
Comparing \eqref{eq:embedding-schr-airy-1}, \eqref{eq:embedding-schr-airy-2},
\eqref{eq:embedding-schr-airy-3} and letting $N\to \infty$, as in Remark \ref{re:airy-schr}, we
obtain,
\begin{equation}\label{eq:embedding-schr-airy-4}
 2^{-1}3^{-1/6}\|\int e^{ix'\eta+it'\eta^2}\widehat{u_0}(\eta)d\eta\|_{L^6_{t',x'}}\le 2^{-1/2}
S_{airy}^\R\|u_0\|_{L^2}.
\end{equation}
By the choice of $u_0$, we have \begin{equation*}
 2^{-1}3^{-1/6}S_{schr}^\C\le  2^{-1/2}S_{airy}^\R,
\end{equation*}
i.e., $S_{schr}^\C\le 2^{1/2}3^{1/6}S_{airy}^\R$. Hence \eqref{eq:esa-2} follows.
To show \eqref{eq:esa-1}, we choose $\phi_N(x):=\frac 1{(3N)^{1/4}}e^{ixN}u_0(\frac
{x}{\sqrt{3N}})$. Then
\begin{equation*}\|\phi_N\|_{L^2}=\|u_0\|_{L^2},\,\|e^{-it\partial_x^2}\phi_N\|_{L^6_{t,x}
(\R\times\R)} =S_{schr}^\C\|u_0\|_{L^2}.\end{equation*}
Also an easy computation shows that
\begin{equation*}
\|D^{1/6}e^{-t\partial_x^3}\phi_N\|_{L^6_{t,x}}\to
3^{-1/6}\|e^{-it\partial_x^2}u_0\|_{L^6_{t,x}},\text{ as }N\to \infty.
\end{equation*}
From the Airy Strichartz inequality, $$\|D^{1/6}e^{-t\partial_x^3}\phi_N\|_{L^6_{t,x}}\le S_{airy}^\C\|\phi_N\|_{L^2}=S_{airy}^\C\|u_0\|_{L^2},$$ we conclude that
\eqref{eq:esa-1} follows.
\end{proof}
Now we are ready to prove Theorem \ref{thm:airy-max}.
\begin{proof}[Proof of Theorem \ref{thm:airy-max}] We only prove the complex version by using Theorem
\ref{thm:Airy-prof}. For the real version, we use Theorem \ref{thm:Airy-prof-real} instead but its
proof is similar.

We choose a maximizing sequence $(u_n)_{n\ge 1}$ with $\|u_n\|_{L^2}=1$, and decompose it into the linear profiles as in Theorem \ref{thm:Airy-prof} to obtain
\begin{equation}\label{eq:airy-max-0}
 u_n=\sum_{1\le j\le l, \xi_n^j\equiv 0 \atop \text{ or } |h_n^j\xi_n^j|\to \infty} e^{t_n^j\partial_x^3}g_n^j[e^{i(\cdot)h_n^j\xi_n^j}\phi^{j}]+w_n^l.
\end{equation}
Then from the asymptotically vanishing Strichartz norm \eqref{eq:err} and the triangle inequality,
we obtain that, up to a subsequence, for any given $\eps>0$, there exists $n_0$, for all $l\ge
n_0$ and $n\ge n_0$,
\begin{equation*}
\|\sum_{j=1}^{l}D^{1/6}e^{-(t-t_n^j)\partial_x^3}g_n^j[e^{i(\cdot)h_n^j\xi_n^j}
\phi^j]\|_{L^6_{t,x}}\ge S_{airy}^\C-\eps,
\end{equation*} with $\xi_n^j\equiv 0$ when $\lim_{n\to \infty}|h_n^j\xi_n^j|<\infty$.
On the other hand, Lemma \ref{le:weak-converg-2} yields,
\begin{equation}\label{eq:airy-max-1}
\|\sum_{j=1}^lD^{1/6}e^{-(t-t_n^j)\partial_x^3}g_n^j[e^{i(\cdot)h_n^j\xi_n^j}
\phi^j]\|^6_{L^6_{t,x}}
\le\sum_{j=1}^{l}\|D^{1/6}e^{-(t-t_n^j)\partial_x^3}g_n^j[e^{i(\cdot)h_n^j\xi_n^j}
\phi^j]\|^6_{L^6_{t,x}}+o_n(1).
\end{equation}
Then up to a subsequence, there exists $n_1$ such that, for large $n\ge n_1$ and $l\ge n_1$,
\begin{equation}\label{eq:ariy-max-2}
\sum_{j=1}^{l}\|D^{1/6}e^{-(t-t_n^j)\partial_x^3}g_n^j[e^{i(\cdot)h_n^j\xi_n^j}
\phi^j]\|^6_{L^6_{t,x}}\ge (S_{airy}^\C)^6-2\eps.
\end{equation}
Choosing $j_0$ such that $D^{1/6}e^{-(t-t_n^{j_0})\partial_x^3}g_n^{j_0}[e^{i(\cdot)h_n^{j_0}\xi_n^{j_0}}
\phi^{j_0}]$ has the biggest Strichartz norm among $1\le j \le l$, we see that, by Strichartz and the almost orthogonal identity \eqref{eq:almost-ortho},
\begin{align*}
(S_{airy}^\C)^6-2\eps &\le \|D^{1/6}e^{-(t-t_n^{j_0})\partial_x^3}g_n^{j_0}[e^{i(\cdot)h_n^{j_0}\xi_n^{j_0}}
\phi^{j_0}]\|^4_{L^6_{t,x}}
\sum_{j=1}^{l} \|D^{1/6}e^{-(t-t_n^j)\partial_x^3}g_n^j[e^{i(\cdot)h_n^j\xi_n^j}
\phi^j]\|^2_{L^6_{t,x}}\\
&\le \|D^{1/6}e^{-(t-t_n^{j_0})\partial_x^3}g_n^{j_0}[e^{i(\cdot)h_n^{j_0}\xi_n^{j_0}}
\phi^{j_0}]\|^4_{L^6_{t,x}} \sum_{j=1}^l
\left(S_{airy}^\C\|\phi^j\|_{L^2}\right)^2 \\
&\le (S_{airy}^\C)^2\|D^{1/6}e^{-(t-t_n^{j_0})\partial_x^3}g_n^{j_0}[e^{i(\cdot)h_n^{j_0}\xi_n^{j_0}}
\phi^{j_0}]\|^4_{L^6_{t,x}}.
\end{align*}
This yields,
\begin{equation}\label{eq:airy-max-6}
\|D^{1/6}e^{-(t-t_n^{j_0})\partial_x^3}g_n^{j_0}[e^{i(\cdot)h_n^{j_0}\xi_n^{j_0}}
\phi^{j_0}]\|_{L^6_{t,x}}\ge
\left((S_{airy}^\C)^{-2}[(S_{airy}^\C)^6-2\eps]\right)^{1/4}\ge S_{airy}^\C-\eps.
\end{equation}
Moreover, \eqref{eq:almost-ortho} implies that there exists $J>0$, such that \begin{equation*}
\|\phi^j\|_{L^2}\le 1/100, \forall j>J.
\end{equation*}
This, together with \eqref{eq:airy-max-6} and the Strichartz inequality \begin{equation*}
\|D^{1/6}e^{-(t-t_n^{j_0})\partial_x^3}g_n^{j_0}[e^{i(\cdot)h_n^{j_0}\xi_n^{j_0}}
\phi^{j_0}]\|_{L^6_{t,x}}\le S_{airy}^\C \|\phi^{j_0}\|_{L^2},
\end{equation*}
shows that, for $\eps$ small enough, $j_0$ is between $1$ and $J$; otherwise $S_{airy}^\C/2\le S_{airy}^\C/100$, a contradiction. Hence $j_0$ does not depend on $l$, $n$ and $\eps$. So we can freely take $\eps$ to zero without changing $j_0$. Now we split into two cases:
\begin{itemize}
\item[\emph{Case I.}]When $h_n^{j_0}\xi_n^{j_0}\to \xi^{j_0}\in \R$, we can take $\xi_n^{j_0}\equiv 0$. Then $\|D^{1/6}e^{-(t-t_n^{j_0})\partial_x^3}g_n^{j_0}(\phi^{j_0})\|_{L^6_{t,x}}=
\|D^{1/6}e^{-t\partial_x^3}\phi^{j_0}\|_{L^6_{t,x}}$. Then we take $\eps\to 0$ in
\eqref{eq:airy-max-6} to obtain $$\|\phi^{j_0}\|_{L^2}=1, \,
S_{airy}^\C=\|D^{1/6}e^{-t\partial_x^3}\phi^{j_0}\|_{L^6_{t,x}}.$$ This shows that $\phi^{j_0}$ is
a maximizer for \eqref{eq:airy-max}.

\item[\emph{Case II.}]   When $|h_n^{j_0}\xi_n^{j_0}|\to \infty$, we take $n\to\infty$ in \eqref{eq:airy-max-6} and use Remark \ref{re:airy-schr},
\begin{align*} S_{airy}^\C-\eps &\le \lim_{n\to\infty}\|D^{1/6}e^{-(t-t_n^{j_0})\partial_x^3}g_n^{j_0}[e^{i(\cdot)h_n^{j_0}\xi_n^{j_0}}
\phi^{j_0}]\|_{L^6_{t,x}} \\
&=\lim_{n\to\infty}\|D^{1/6}e^{-t\partial_x^3}[e^{i(\cdot)h_n^{j_0}\xi_n^{j_0}}
\phi^{j_0}]\|_{L^6_{t,x}}\\
&=3^{-1/6}\|e^{-it\partial_x^2}\phi^{j_0}\|_{L^6_{t,x}}\le
3^{-1/6}S_{schr}^\C\|\phi^{j_0}\|_{L^2}\\
&\le S_{airy}^\C\|\phi^{j_0}\|_{L^2}.
\end{align*}
Taking $\eps \to 0$ forces all the inequality signs to be equal. Hence we obtain $$\|\phi^{j_0}\|_{L^2}=1,\,S_{airy}^\C=3^{-1/6}S_{schr}^\C$$ and
$S_{airy}^\C=\lim_{n\to\infty}\|D^{1/6}e^{-t\partial_x^3}[e^{i(\cdot)h_n^{j_0}\xi_n^{j_0}}
\phi^{j_0}]\|_{L^6_{t,x}}=3^{-1/6}\|e^{-it\partial_x^2}\phi^{j_0}\|_{L^6_{t,x}}$. This shows that
$S_{schr}^\C=\|e^{-it\partial_x^2}\phi^{j_0}\|_{L^6_{t,x}}$; hence $\phi^{j_0}$ is a maximizer for \eqref{eq:schr-max}. Set $a_n:=h_n^{j_0}\xi_n^{j_0}$. Then the proof of Theorem \ref{thm:airy-max} is complete.
\end{itemize}
\end{proof}

\bibliography{refs}
\bibliographystyle{plain}
\end{document}